\def\5n{\negthinspace \negthinspace \negthinspace \negthinspace \negthinspace }
\def\4n{\negthinspace \negthinspace \negthinspace \negthinspace }
\def\3n{\negthinspace \negthinspace \negthinspace }
\def\2n{\negthinspace \negthinspace }
\def\1n{\negthinspace }
\def\dbE{\mathbb{E}}     
\def\dbF{\mathbb{F}} \def\sF{\mathscr{F}}    
   \def\cH{{\cal H}}  
   \def\cI{{\cal I}}
 \def\sN{\mathscr{N}}    
\def\dbP{\mathbb{P}}     
\def\dbR{\mathbb{R}}     
   \def\cS{{\cal S}}
   \def\cV{{\cal V}}
\def\ms{\medskip}         \def\da{\mathop{\downarrow}}
\def\ra{\rightarrow}      
\def\no{\noindent}        \def\q{\quad}                      
    \def\qq{\qquad}
            \def\({\Big (}
                  \def\){\Big )}
\def\leq{\leqslant}       \def\geq{\geqslant}
\def\ges{\geqslant}          \def\[{\Big[}
           \def\]{\Big]}
\def\h{\widehat}                   \def\cd{\cdot}
\def\ti{\tilde}                            
        \def\ts{\times}                    
\def\pa{\partial}
\def\a{\alpha}        \def\G{\Gamma}      \def\O{\Omega}   
\def\b{\beta}            \def\d{\delta}   \def\F{\Phi}     \def\p{\phi}
               \def\si{\sigma}
\def\e{\varepsilon}   \def\L{\Lambda}  \def\l{\lambda}        
         \def\f{\varphi}     
\def\bde{\begin{definition}\label}    \def\ede{\end{definition}}
\def\be{\begin{equation}}
	\def\bel{\begin{equation}\label}      \def\ee{\end{equation}}
	\def\bt{\begin{theorem}\label}        \def\et{\end{theorem}}
	\def\bc{\begin{corollary}\label}      \def\ec{\end{corollary}}
	\def\bl{\begin{lemma}\label}          \def\el{\end{lemma}}
	\def\bp{\begin{proposition}\label}    \def\ep{\end{proposition}}
	\def\bas{\begin{assumption}\label}    \def\eas{\end{assumption}}
	\def\br{\begin{remark}\label}         \def\er{\end{remark}}
	\def\bex{\begin{example}\label}       \def\ex{\end{example}}
	\def\ba{\begin{array}}                \def\ea{\end{array}}
	\def\ben{\begin{enumerate}}           \def\een{\end{enumerate}}
	\newtheorem{theorem}{Theorem}[section]
	\newtheorem{definition}[theorem]{Definition}
	\newtheorem{proposition}[theorem]{Proposition}
	\newtheorem{corollary}[theorem]{Corollary}
	\newtheorem{lemma}[theorem]{Lemma}
	\newtheorem{remark}[theorem]{Remark}
	\newtheorem{example}[theorem]{Example}
\begin{document}

	\title{\bf \Large{A Global Maximum Principle for Controlled Conditional Mean-field FBSDEs with Regime Switching}}

		\author{
			Tao Hao\thanks{School of Statistics and Mathematics, Shandong University of Finance and Economics, Jinan 250014, China
                (Email: {\tt taohao@sdufe.edu.cn}).
				This author is supported by Natural Science Foundation of Shandong Province (Grant No. ZR2020MA032),
                National Natural Science Foundation of China (Grant Nos. 11871037, 72171133).}~,~~~
			%
			%
	   Jiaqiang Wen\thanks{Department of Mathematics, Southern University of Science and Technology, Shenzhen 518055, China
                (Email: {\tt wenjq@sustech.edu.cn}.
				This author is supported by National Natural Science Foundation of China (grant No. 12101291) and Guangdong Basic and Applied
                Basic Research Foundation (grant Nos. 2022A1515012017).}~,~~~
			Jie Xiong\thanks{Department of Mathematics and SUSTech International center for Mathematics,
				Southern University of Science and Technology,
				Shenzhen, Guangdong, 518055, China (Email: {\tt xiongj@sustech.edu.cn}).
				This author is supported by SUSTech Start up fund Y01286120 and National Natural Science Foundation of China
                (grant No. 61873325).}
		}
		
		\date{}
		
		\maketitle

		
		\no\bf Abstract. \rm
		This paper is devoted to a global stochastic maximum principle for conditional mean-field forward-backward stochastic differential equations (FBSDEs, for short) with regime switching.
        The control domain is unnecessarily convex and the driver of backward stochastic differential equations (BSDEs, for short) could depend on $Z$.
        Different from the case of non-recursive utility, the first-order and second-order adjoint equations are both high-dimensional
         linear BSDEs.
        Based on the adjoint equations, we reveal the relations among the terms of the first- and second-order Taylor's expansions.
        A general maximum principle is proved, which develops the work of Nguyen, Yin, and Nguyen \cite{Nguyen-Yin-Nguyen-21} to recursive utility.
        As applications, the linear-quadratic problem is considered and a problem with state constraint is studied.

		\ms
		
		\no\bf Key words: \rm Regime switching, conditional mean-field FBSDE, maximum principle, adjoint equation, variational equation.
		
		\ms
		
		\no\bf AMS subject classifications. \rm 60H10, 60H30.

		\section{Introduction}
        Let $T>0$ and $(\O,\sF, \dbF, \dbP)$  be a filtered probability space satisfying the usual conditions, on which a $1$-dimensional standard
Brownian motions $W$ and a continuous-time Markov chain $\a(\cd)$ with a finite state space $\cI=\{1,2,\cd\cd\cd, I\}$ are defined. The generator of
Markov chain $\a(\cd)$ is denoted by $\L=(\l_{ij})_{i,j\in \cI}$, which satisfies $\l_{ij}\ges0$, for $i\neq j\in \cI$ and
$\sum\limits_{j\in\cI}\l_{ij}=0$, for every $i\in \cI$. For each $s>0$, we set
        $$\sF^\a_s=\si\{\a(r): 0\leq r\leq  s\}\vee \sN,\q \sF_s=\si\{W(r),\a(r): 0\leq r\leq  s\}\vee \sN,$$
        where $\sN$ is the set of all $\dbP$-null subsets, and denote $\dbF^\a=(\sF^\a_s)_{s\in[0,T]},\ \dbF=(\sF_s)_{s\in[0,T]}$.

        In this paper, we consider the following forward-backward control system:
        \begin{equation}\label{3.1}
           \hspace{-0.38cm}\left\{\begin{aligned}
               d X^v(t)&=b(t,X^v(t),\dbE[X^v(t)|\sF^{\a}_{t-}],v(t),\a(t-))dt\\
                       &\q+\si(t,X^v(t),\dbE[X^v(t)|\sF^{\a}_{t-}],v(t),\a(t-))dW(t),\ t\in[0,T],\\
               X^{v}(0)&=x,
           \end{aligned}\right.
        \end{equation}
        and
        \begin{equation}\label{3.3}
            \hspace{-0.38cm}\left\{\begin{aligned}
                -d Y^v(t)&=f(t,X^v(t),\dbE[X^v(t)|\sF^{\a}_{t-}],Y^v(t),Z^v(t),v(t),\a(t-))dt-Z^v(t)dW(t),\q t\in[0,T],\\
                 Y^{v}(T)&=\Phi(X^{v}(T),\dbE[X^v(T)|\sF^{\a}_{T-}], \a(T)).
            \end{aligned}\right.
        \end{equation}
        The precise assumptions on the coefficients $b, \sigma,f, \Phi$ refer to Section 3. The cost functional is defined by
        \begin{equation}\label{3.4}
        J(v(\cdot))=Y^v(0).
        \end{equation}
        The purpose of present paper is to give a necessary condition to the optimal control $\bar v(\cd)$, which minimizes the cost functional
$J(v(\cd))$ over $\cV_{0,T}$ (see \autoref{def 3.1}).

        Let us explain the motivation to investigate the equation (\ref{3.1}) for the angle of particle system. Consider an interacting particle
system:
        \begin{equation}\label{1.2}
            \hspace{-0.38cm}\left\{\begin{aligned}
                d X^{i,N}(t)&=b(t,X^{i,N}(t), \frac{1}{N}\sum\limits_{j=1}^NX^{j,N}(t),v^i(t),\a(t-))dt\\
                            &\quad+\si(t,X^{i,N}(t),\frac{1}{N}\sum\limits_{j=1}^NX^{j,N}(t),v^i(t),\a(t-))dW^i(t),\q t\in[0,T],\\
                  X^{i,N}(0)&=x,\q i=1,2,\cd\cd\cd,N,
            \end{aligned}\right.
        \end{equation}
        where $\{W^i;1\leq i\leq N\}$ is $N$ independent standard $1$-dimensional Brownian motions. In many applications, the state process $X^{i,N}$
 is observable while the Brownian motion $W^i$, which is used to model the distribution of $X^{i,N}$, may not be observable. Hence, the closed-loop form of the control $v^i$ is usually used, i.e.,
        $$v^i(t)=\psi(t,X^{i,N}(t), \frac{1}{N}\sum_{j=1}^NX^{j,N}(t),\alpha(t-) ).$$
        Here $\psi$ is a deterministic function depending on the time, the stage, the average state and the switching state. In order to reduce the
computation complexity of the above particle system, generally speaking, let $N\rightarrow\infty$ and consider its limit. As stated  in many
literatures, for example, \cite{Buckdahn-Djehiche-Li-Peng-09, Buckdahn-Li-Peng-09}, according to the law of large numbers, one usually has
        $$\frac{1}{N}\sum_{j=1}^NX^{j,N}(t)\rightarrow \dbE[X^v(t)].$$
        But notice that the appearance of $\alpha(\cdot)$ in all the dynamics of (\ref{1.2}) can  lead to the dynamic of each particle depending on
the history of this process. Consequently, both the mean-field term $\frac{1}{N}\sum_{j=1}^NX^{j,N}$ and its limit (as $N\rightarrow\infty$) depend
on the history of $\a$. Thereby, as $N\rightarrow\infty$, we obtain the conditional mean-field SDE (\ref{3.1}), see \cite{Nguyen-Yin-Hoang-20} for
more details.

        Next, let us overview the history of general maximum principles. As we know, to derive maximum principles, namely, necessary conditions %
for optimality, is an important approach in solving optimal control problems. Its history can be traced back to the work by
Boltyanski-Gamkrelidze-Pontryagin \cite{Boltyanski-Gamkrelidze-Pontryagin-56} in 1956 on the Pontryagin's maximum principle for deterministic control
systems. They introduced the spike variation, and considered the first-order term in a kind of Taylor expansion with respect to this perturbation.
However, if the diffusion term in stochastic control systems depends on control, the approach introduced by
\cite{Boltyanski-Gamkrelidze-Pontryagin-56} does not work. The reason is that the order of $\int_t^{t+\e}\si(t)dW_t$ is
$\sqrt{\e}$ but not $\e$. Later, Peng \cite{Peng-90} considered the second-order term in the Taylor expansion of the variation to solve this
difficulty, and obtained the global maximum principle for  classical stochastic optimal control problems.

       In 1993, Peng \cite{Peng-93} generalized the classical stochastic optimal control problem to one where the cost functional is
defined by $Y(0)$, where $(Y(\cd), Z(\cd))$ is the solution to the following BSDE:
       \begin{equation}\label{3.3-11111}
            \hspace{-0.38cm}\left\{\begin{aligned}
                -d Y^v(t)&=f(t,X^v(t),Y^v(t),Z^v(t),v(t))dt-Z^v(t)dW(t),\q t\in[0,T],\\
                 Y^{v}(T)&= \Phi(X^{v}(T)).
             \end{aligned}\right.
       \end{equation}
The notion of recursive utilities in continuous time is firstly proposed by Duffie, Epstein \cite{Duffie-Epstein-92}. El Karoui, Peng and Quenez \cite{El Karoui-Peng-Quenez-01} developed this notion, and defined a more general class of stochastic recursive utilities in economic theory by solutions of BSDEs.

        When the control domain is convex, one can use the technique of convex variation to obtain a local stochastic maximum principle,
see, for example,  Dokuchaev and Zhou \cite{Dokuchaev-Zhou-99}, Ji and Zhou \cite{Ji-Zhou-06},
Wu \cite{Wu-13}, Xu \cite{Xu-95}.  However, if  the control domain is nonconvex, an essential difficulty is how to construct the first-order and second-order expansions for the BSDE (\ref{3.3-11111}). It is proposed as an open problem in Peng \cite{Peng-98}.

        A method for solving this problem is to regard $Z(\cdot)$ as a control process and the terminal condition $Y(T)=\Phi(X^{v}(T))$ as a
constraint, and apply the Ekeland variational principle to obtain the maximum principle, see Wu \cite{Wu-13}, Yong \cite{Yong-10}. But the maximum principle obtained by this method contains unknown parameters. In 2017, Hu \cite{Hu-17} firstly brought in a new second-order Taylor expansion for the BSDE (\ref{3.3-11111}), and solved the open problem proposed by Peng completely. Hu, Ji and Xue \cite{Hu-Ji-Xue-18} generalized Hu's work to the fully coupled forward-backward stochastic control systems. Hu, Ji and Xu \cite{Hu-Ji-Xu-22}
obtained a global stochastic maximum principle for forward-backward stochastic control systems with quadratic generators.

        As for  stochastic maximum principle for mean-field control systems, we refer to Andersson and Djehiche
\cite{Andersson-Djehiche-11} for a local maximum principle for SDEs of mean-field type, to Buckdahn, Djehiche and Li \cite{Buckdahn-Djehiche-Li-11} for a global stochastic maximum principle for SDEs of mean-field type, to Buckdahn, Li and Ma \cite{Buckdahn-Li-Ma-16}  for a global stochastic maximum principle for general mean-field systems.
Recently, Nguyen, Yin, Hoang \cite{ Nguyen-Yin-Hoang-20} proved the laws of large numbers for systems with mean-field interactions and Markovian switching.
Making use of this result, Nguyen, Nguyen and Yin \cite{Nguyen-Nguyen-Yin-20} obtained a local maximum principle for mean-field type control problems of switching diffusion.
Subsequently, Nguyen, Yin and Nguyen \cite{Nguyen-Yin-Nguyen-21} relaxed the convexity assumption on control domain to the non-convex case.

In this paper, we develop the work of Nguyen, Yin and Nguyen \cite{Nguyen-Yin-Nguyen-21} to  recursive utilities.
It is nontrivial. This is reflected in the following three aspects.
\begin{enumerate}
			\item[(i)]For the non-recursive utilities case, the purpose of constructing first-order adjoint equation
is to use the following equality
         $$
          \dbE[Y^{1,\e}(t)]=\dbE\big[p(t)X^{1,\e}(t)\big],
         $$
where $X^{1,\e}(\cd)$ and $Y^{1,\e}(\cd)$ are the solutions to the first-order variational equations of SDE (\ref{3.1}) and BSDE (\ref{3.3}); $p(\cd)$ is the solution to the
first-order adjoint equation. According to the fact
        $$  \dbE[\dbE[X^{1,\e}(t)|\sF^\a_{t-}]\F(t)]=  \dbE[\dbE[\F(t)|\sF^\a_{t-}]X^{1,\e}(t)],$$
where $\F(t)$ is an $\sF_t$-measurable stochastic process, the first-order adjoint equation can be written as a linear conditional mean-field %
BSDE (see \cite{Nguyen-Yin-Nguyen-21} for example). However, the above approach is not suitable for the
recursive utilities case, since $f$ depends on $(y,z)$. In fact, we need the following slightly ``stronger" relation, $\dbP$-a.s., $t\in[0,T]$,
       \begin{equation}\label{1.0}
       Y^{1,\e}(t)= p_0(t)X^{1,\e}(t)+p_1(t)\dbE[X^{1,\e}(t)|\sF^\a_{t-}].
       \end{equation}
From this, one can know that  the first-order adjoint equation should be a high-dimensional BSDE (without conditional mean-field term)
(see (\ref{5.2-2})-(\ref{5.2-3})).
         \item[(ii)] Due to the appearance of $p_1(\cdot)$ in our method, it comes naturally to deal with the term
$p_1(t)\dbE[X^{2,\e}(t)|\sF^\a_{t-}]$ when deducing the second-order expansion of BSDEs. But the order of the term
$\dbE[(X^{1,\e}(t))^2|\sF^\a_{t-}]$ appearing in $\dbE[X^{2,\e}(t)|\sF^\a_{t-}]$ is $O(\e)$, but not $o(\e)$.
This leads to the order of $p_1(t)\dbE[X^{2,\e}(t)|\sF^\a_{t-}]$ being $O(\e)$, but not $o(\e)$.
Based on the  point above, our second-order adjoint equation is used to deal not only with the resulting impact of $(X^{1,\e}(t))^2$, but
also with that of $\dbE[(X^{1,\e}(t))^2|\sF^\a_{t-}].$
This means that in our case the second-order adjoint equation should also be a high-dimensional BSDE, see (\ref{5.4}).
Note that the first component of  (\ref{5.4}) is just the second-order adjoint equation (4.2) \cite{Nguyen-Yin-Nguyen-21} for the non-recursive utilities case.

\item[(iii)] Since the diffusion term $\sigma$ depends on control, the first- and second-order variational equations for the
BSDE (\ref{3.3}) (see (\ref{4.1-1}), (\ref{4.2-1-1})) involve the term $p^0(t)\d\si(t,v(t))\mathbf{1}_{E_\e}(t)$ in the variation of $z$.
But, since its order is $O(\e)$ for any order expansion of $f$, we need to  consider a new second-order Taylor expansion, and
introduce an auxiliary BSDE (\ref{5.4111}) to handle this obstacle.
		\end{enumerate}

        Compared with the existing literatures, three points should be lighten.
First, since the coefficient $f$ depends on $(y,z)$ and the conditional mean-field term, we establish two pair of new equalities to reveal the
relations among those terms of the first- and second-order Taylor expansions of conditional mean-field FBSDEs,  see (\ref{4.1-2}), (\ref{5.5}).
Second, different to the non-recursive utilities case, the first- and second-order adjoint equations in our case are two high-dimensional linear BSDEs (without conditional mean-field term) (see (\ref{5.2-2}), (\ref{5.4})).
Third, a global maximum principle for conditional mean-field FBSDE (\ref{3.1})-(\ref{3.3}) is proved, which extends the work of Nguyen et al. \cite{Nguyen-Nguyen-Yin-20} from the non-recursive utilities case  to the recursive utilities case (\autoref{th 3.4}).

       This paper is arranged as follows. In Section 2, the first- and second-order adjoint equations as well as the maximum principle are
shown.
The first-order Taylor expansion of the FBSDE (\ref{3.1})-(\ref{3.3}) and some estimates are supplied in Section 3.
The Section 4 is devoted to the second-order Taylor expansion of the FBSDE (\ref{3.1})-(\ref{3.3}).
We study the linear-quadratic case in Section 5.
A problem with state constrain is investigated in Section 6.
In Section 7, some concluding remarks are listed.
In appendix, we supply some proofs.

           \section{Main result}\label{Sec3}

           In this section, we give the main result--stochastic maximum principle.  Throughout this paper, let $V$ be a given nonempty subset of $\dbR$.
           \begin{definition}\label{def 3.1}
           An $\sF_t$-adapted process $v(\cdot)$ with values in $V$ is called an admissible control, if it satisfies
           $$
           \sup_{0\leq t\leq T}\dbE[|v(t)|^8]<\infty.
           $$
           By $\cV_{0,T}$ we denote the set of all admissible controls.
           \end{definition}

          Next, we introduce two spaces which are used frequently: for $\b\geq2,$\\

          \indent $\bullet$\ ${\cS}_{\dbF}^{\b}(0,T;\dbR^n)$ is the family of $\dbR^n$-valued $\dbF$-adapted c\`{a}dl\`{a}g processes
$(\f_t)_{0\leq t\leq T}$ with
          $$\dbE\Big[\mathop{\rm sup}_{0\leq t\leq T}| \f_{t} |^{\b}\Big]< +\infty.$$
          \indent $\bullet$\  $\cH_{\dbF}^{2,\frac{\b}{2}}(0,T;\dbR^{n})$ is the family of $\dbR^n$-valued $\dbF$-progressively measurable
processes $(\f_t)_{0\leq t\leq T}$ with
          $$\dbE\Big[\Big(\int^{T}_{0} |\f_{t}|^{2}dt\Big)^\frac{\b}{2}\Big]<+\infty.$$

          \ms

          Let $ (b,\sigma):[0,T] \times \dbR \times \dbR \times  V  \times \cI\rightarrow \dbR$ satisfy
          \begin{description}
          \item[ ]\textbf{Assumption 1.} {\rm(i)}  There exists a constant $L>0$ such that, for $t\in[0,T],x,x', \bar{x},\bar{x}'\in \dbR,
                       v\in V, i\in \cI$ and for  $\f=b,\sigma,$
               $$\begin{aligned}
                       &|\f(t,x,x',v,i)-\f(t,\bar{x},\bar{x}',v,i)|\leq L(|x-\bar{x}|+|x'-\bar{x}'|),\\
                       &|\f(t,x,x',v,i)|\leq L(1+|x|+|x'|+|v|).
               \end{aligned}$$
	        {\rm(ii)}  The function $\f=b,\si$ is twice continuously differential with respect to $(x,x')$; the derivatives
                       $\f_x,\f_{x'},\f_{xx},\f_{xx'},\f_{x'x'}$ are continuous with respect to $(x,x',v)$, and  are bounded by a constant $L>0$.
           \end{description}

           \ms

           For each $v\in \cV_{0,T}$, under the item (i) of \textbf{Assumption 1} the conditional mean-field SDE (\ref{3.1}) possesses a
unique solution $X^v(\cd)\in \cS^8_{\dbF}(0,T;\dbR).$ Moreover, for any $2\leq \beta\leq 8$, there exists a constant $C>0$ depending on
$L, T, \b$ such that
            \begin{equation}\label{3.2}
            \dbE\Big[\sup_{t\in[0,T]}|X^v(t)|^\b\Big]\leq C\Big(1+\sup_{t\in[0,T]} \dbE\big[|v(t)|^\b\big]\Big).
            \end{equation}
See Lemma 2.4 \cite{Nguyen-Yin-Nguyen-21}.

           \ms

           Let the mappings $f:[0,T] \times \dbR \times \dbR  \times \dbR  \times \dbR  \times V  \times \cI \rightarrow \dbR$ and
$\Phi:\dbR  \times \dbR  \times   \cI\rightarrow \dbR$ satisfy
             \begin{description}
             \item[ ]\textbf{Assumption 2.} {\rm(i)} There exists some constant $L>0$ such that, for $t\in[0,T],
             x,x',\bar{x},\bar{x}'\in\mathbb{R},$ $y,\bar{y}\in \mathbb{R},$ $z,\bar{z}\in \dbR, v, \bar{v}\in V,i\in \cI$,
             $$\begin{aligned}
                 &|f(t,x,x',y,z,v,i)-f(t,\bar{x},\bar{x}',\bar{y},\bar{z},\bar{v},i)|\\
                 &\leq L\Big((1+|x|+|\bar{x}|+|x'|+|\bar{x}'|+|v|+|\bar{v}|)(|x-\bar{x}|+|x'-\bar{x}'|+|v-\bar{v}|)
                 +|y-\bar{y}|+|z-\bar{z}|\Big),\\
                 &|\Phi(x,x',i)-\Phi(\bar{x},\bar{x}',i)|\leq L(1+|x|+|x'|+|\bar{x}|+|\bar{x}'|)(|x-\bar{x}|+|x'-\bar{x}'|),\\
                 &|\Phi(0,0,i)|+|f(t,0,0,0,0,0,0,i)|\leq L.
             \end{aligned}$$
             {\rm(ii)}  The functions $f,\f$ are twice continuously differential with respect to $(x,x',y,z)$ and $(x,x')$, respectively; $Df$ and $D^2f$, the Hessian matrix of $f$  with respect to $(x,x',y,z)$,  are continuous with respect to $(x,x',y,z,v)$;
             $ \F_{x},\F_{x'}, \F_{xx}, \F_{xx'},\F_{x'x'}$ are continuous with respect to $(x,x').$\\
             {\rm(iii)}  The first-order derivatives of $f,  \F$ in $(x,x')$ are bounded by $L(1+|x|+|x'|+|v|)$, $L(1+|x|+|x'|)$, respectively;  $D^2f$ and all the second-order derivatives of $\F$ in $(x,x')$ are  bounded by $L$.
             \end{description}
              \begin{description}
              \item[ ]\textbf{Assumption 3.}
              $b_x(\cdot), \sigma_x(\cdot), b_{xx}(\cdot)$  are  $\dbF^{\alpha}$-adapted.
              \end{description}
\begin{remark} \rm
              \begin{itemize}
			    \item [$\mathrm{(i)}$] \textbf{Assumptions 1}-\textbf{3} covers  the linear-quadratic cases with deterministic
                coefficients and with $\dbF^{\a}$-adapted coefficients.
                \item [$\mathrm{(ii)}$] \textbf{Assumptions 1}-\textbf{3}  covers the case  of a recursive utility and a linear wealth
                (see Duffie and Skiadas \cite{Duffie-Skiadas-94} and Schroder and Skiadas \cite{Schroder-Skiadas-97}), in which the function $f$ does not depend on $z$. In addition,  \textbf{Assumptions 1}-\textbf{3}  also covers the case of the large investor
                (see \cite{El Karoui-Peng-Quenez-01}), in which $f(t,y,c)=-\beta y+u(c)$.
              \end{itemize}
\end{remark}
\begin{lemma}\label{le 3.2}
              Under  \textbf{Assumption 1} and the item $\mathrm{(i)}$ of \textbf{Assumption 2}, the equation (\ref{3.3}) exists a unique solution $(Y^v(\cd),Z^v(\cd))\in \cS^4_{\dbF}(0,T;\dbR) \times \cH^{2,2}_{\dbF}(0,T;\dbR).$ Moreover, for $1< \beta\leq4$, there exists
              a constant $C>0$ depending on $L, T, \b$ such that
              $$
                \begin{aligned}
                    &\mathbb{E}\bigg[\sup\limits_{t\in[0,T]}|Y^v(t)|^\beta+\Big(\int_0^T|Z^v(t)|^2dt\Big)^\frac{\beta}{2}\bigg]
                         \leq C\bigg(1+\sup\limits_{t\in[0,T]} \mathbb{E}\Big[|v(t)|^{2\beta}\Big]\bigg).
                 \end{aligned}
              $$
\end{lemma}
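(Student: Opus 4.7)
The plan is to view the BSDE (\ref{3.3}) as a standard Lipschitz BSDE in the filtration $\dbF$, with the already-solved forward process $X^v$ treated as an input datum, and then derive the $L^\b$-estimate from the classical a priori bounds for such BSDEs. Since $X^v\in\cS^8_{\dbF}(0,T;\dbR)$ is uniquely determined and satisfies (\ref{3.2}), both $X^v(\cd)$ and its conditional expectation $\dbE[X^v(\cd)|\sF^{\a}_{\cd-}]$ are well-defined $\dbF$-progressively measurable processes, the latter bounded in $L^8$ by the conditional Jensen inequality. Substituting these, together with $v$, into (\ref{3.3}) reduces the equation to a BSDE with driver
$$\wt f(t,y,z):= f(t,X^v(t),\dbE[X^v(t)|\sF^{\a}_{t-}],y,z,v(t),\a(t-))$$
that is uniformly Lipschitz in $(y,z)$ by Assumption 2(i), and terminal condition $\xi:=\F(X^v(T),\dbE[X^v(T)|\sF^{\a}_{T-}],\a(T))$.

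Next, I would check the $L^\b$-integrability of the free data. By Assumption 2(i),
$$|\wt f(t,0,0)|\leq L+L\big(1+|X^v(t)|+|\dbE[X^v(t)|\sF^{\a}_{t-}]|+|v(t)|\big)\big(|X^v(t)|+|\dbE[X^v(t)|\sF^{\a}_{t-}]|+|v(t)|\big),$$
and $|\xi|$ has quadratic growth in $X^v(T)$ and its conditional expectation. Since $\b\leq 4$ and $X^v\in\cS^8_{\dbF}$, combining (\ref{3.2}), the conditional Jensen inequality, and H\"older's inequality yields
$$\dbE\[|\xi|^{\b}+\(\int_0^T|\wt f(t,0,0)|\,dt\)^{\b}\]\leq C\(1+\sup_{t\in[0,T]}\dbE[|v(t)|^{2\b}]\).$$
Existence and uniqueness of $(Y^v,Z^v)\in\cS^{\b}_{\dbF}(0,T;\dbR)\times\cH^{2,\b/2}_{\dbF}(0,T;\dbR)$ then follow from the classical $L^p$-theory of Lipschitz BSDEs; in the Brownian-Markov chain filtration, the martingale representation generates an additional jump-martingale term from the compensated chain, which can be absorbed into the standard contraction argument.

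For the quantitative bound, applying It\^o's formula to a smooth approximation of $|Y^v|^{\b}$, using the Lipschitz property of $\wt f$ in $(y,z)$ together with Young's inequality to absorb the $Z$-contribution, and then applying the Burkholder-Davis-Gundy and Gronwall inequalities, one obtains
$$\dbE\[\sup_{t\in[0,T]}|Y^v(t)|^{\b}+\(\int_0^T|Z^v(t)|^2\,dt\)^{\b/2}\]\leq C\,\dbE\[|\xi|^{\b}+\(\int_0^T|\wt f(t,0,0)|\,dt\)^{\b}\],$$
which combined with the previous display yields the stated estimate. The main technical point is to keep careful track of how the quadratic growth of $f$ and $\F$ in the forward and control variables interacts with the integrability of $X^v$: the $L^8$-bound on $X^v$ yields only $L^4$-integrability of the quadratic data, which is precisely the source of the restriction $\b\leq 4$. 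Apart from that, the argument is a routine application of standard BSDE a priori estimates.
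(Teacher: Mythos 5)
Your proposal is correct and is essentially the paper's own argument: the paper proves the lemma by combining the forward moment bound (\ref{3.2}) with the $L^\beta$ a priori estimate for Lipschitz BSDEs given in \autoref{le 6.2} (existence and uniqueness coming from \autoref{le 6.1}), which is exactly your reduction --- freeze $X^v$ and $\dbE[X^v(\cdot)|\sF^\a_{\cdot-}]$ as data, note that the quadratic growth of $f,\Phi$ together with $X^v\in \cS^8_{\dbF}$ gives $L^\beta$-integrable terminal value and driver only for $\beta\le 4$, and invoke the standard estimate. Your side remark about a compensated-chain jump martingale plays no role in the paper, which formulates (\ref{3.3}) and (\ref{6.1}) with the Brownian integral only, and it does not change the argument.
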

              \begin{proof}
              The proof is immediate from (\ref{3.2}) and \autoref{le 6.2}  in Appendix.
              \end{proof}

             The control $\bar{v}(\cdot)\in \mathcal{V}_{0,T}$ satisfying
             \begin{equation}\label{3.5}
              J(\bar{v}(\cd))=\inf_{v(\cd)\in \cV_{0,T}}J(v(\cd))
             \end{equation}
             is called an optimal control. Let $\bar{X}(\cd):=X^{\bar{v}}(\cd),$ $(\bar{Y}(\cd),\bar{Z}(\cd)):=(Y^{\bar{v}}(\cd),Z^{\bar{v}}(\cd))$
             be the solutions to the equation (\ref{3.1}) and the equation  (\ref{3.3}) with the optimal control $\bar{v}(\cd)$, respectively.
             $(\bar{v}(\cd), \bar{X}(\cd),Y^{\bar{v}}(\cd),Z^{\bar{v}}(\cd))$ is called an optimal pair. The aim of the present paper is to give a necessary condition of the optimal control problem (\ref{3.1})-(\ref{3.3})-(\ref{3.4})-(\ref{3.5}).

            For a stochastic process or a random variable $\xi$, by $\h{\xi}:=\dbE[\xi|\sF^{\a}_{t-}]$  we denote its optimal filtering estimate in the sense of Xiong \cite{Xiong-08}. Denote
           $$p(t)=(p^0(t), p^1(t))^\intercal,\  q(t)=(q^0(t), q^1(t))^\intercal.$$
           Here and thereafter the superscript $\intercal$ denotes the transpose of vectors or matrices.
           Let us consider the following first-order adjoint equation:
           \begin{equation}\label{5.2-2}
           \left\{\begin{aligned}
              dp(t)&=-\Big[F^p(t)p(t)+F^q(t)q(t)+F^f(t)\Big]dt+q(t)dW(t),\\
               p(T)&=F^\Phi(T),
           \end{aligned}\right.
           \end{equation}
           where
           \begin{equation}\label{5.2-3}
           \begin{aligned}
             F^p(t)=& \begin{pmatrix}
                             b_x(t)+f_y(t)+f_z(t)\sigma_x(t)& 0\\
                             b_{x'}(t)+f_z(t)\sigma_{x'}(t)&
                             b_{x}(t)+ \widehat{b}_{x'}(t) +f_y(t)
                       \end{pmatrix}, \\
             F^q(t)=&\begin{pmatrix}
                             \sigma_x(t)+f_z(t)& 0\\
                             \sigma_{x'}(t)& f_z(t)
                      \end{pmatrix}, \qq
             F^{f}(t)=\begin{pmatrix}
                              f_x(t)\\
                              f_{x'}(t)
                      \end{pmatrix}, \qq
             F^{\F}(t)=\begin{pmatrix}
                              \Phi_x(T)\\
                              \Phi_{x'}(T)
                        \end{pmatrix},
           \end{aligned}
           \end{equation}
           which is a $2$-dimensional linear BSDE. Under  \textbf{Assumptions 1}-\textbf{3}, according to \autoref{le 6.1},
           it possesses a unique solution $(p,q)\in \mathcal{S}^2_{\dbF}(0,T;\dbR^2)  \times \cH^{2,1}_{\dbF}(0,T;\dbR^2)$ such that, for $1<\b\leq8$,
           $$
               \dbE\Big[\sup_{t\in[0,T]}|p(t)|^\beta+\Big(\int_0^T|q(t)|^2dt\Big)^\frac{\beta}{2}\Big]<\infty.
           $$
\begin{remark}\label{re 3.1}
			  $\mathrm{(i)}$ \ If $b,\si ,f,\F$ are independent of $x'$, i.e., the case without conditional mean-field term, then
                                     $p_1=q_1\equiv0$. Our first-order adjoint equation reduces to that of classical optimal control problem, see (15) and (25) \cite{Hu-17}.

             $\mathrm{(ii)}$\ If $f$ is independent of $(y,z)$, i.e., the non-recursive utilities case, our first-order adjoint
                                      equation (\ref{5.2-2}) is just the equation (4.1) \cite{Nguyen-Yin-Nguyen-21}. In fact, by setting
                                      $p(t):=p^0(t)+\dbE[p^1(t)|\sF^\a_{t-}]$ and $q(t):=q^0(t)$ one can easily check this argument.
\end{remark}

              By $D_{xyz}^2f$  we denote the Hessian matrix of $f$  with respect to $(x,y,z)$, i.e.,
              $$\begin{aligned}
                 D_{xyz}^2f=\begin{pmatrix}
                              f_{xx}& f_{xy} &f_{xz}\\
                              f_{yx} &f_{yy}&f_{yz}\\
                              f_{zx} &f_{zy}&f_{zz}\\
                            \end{pmatrix},
              \end{aligned}$$
              and denote
             $$\begin{aligned}
                  P(t)&=(P^0(t), P^1(t))^\intercal, \q Q(t)=(Q^0(t), Q^1(t))^\intercal,\q G^\F(T)=(\Phi_{xx}(T), 0)^\intercal.
             \end{aligned}$$
             The second-order adjoint equation is
             \begin{equation}\label{5.4}
                 \left\{\begin{aligned}
                    dP(t)&=-\bigg\{G^P(t)P(t)+G^Q(t)Q(t)+G^p(t)p(t)+G^q(t)q(t)+G^f(t)\bigg\}dt+Q(t)dW(t),\\
                     P(T)&=G^\Phi(T),
                 \end{aligned}\right.
             \end{equation}
             where
             $$
                 \begin{aligned}
                    G^P(t)=&\begin{pmatrix}
                                f_y(t)+2f_z(t)\sigma_x(t)+2b_x(t)+(\sigma_x(t))^2& 0 \\
                                0& f_y(t)+2b_x(t)+(\sigma_x(t))^2 \\
                            \end{pmatrix}, \\
                  G^Q(t)=&\begin{pmatrix}
                              2\sigma_x(t)+f_z(t)& 0   \\
                                                0& f_z(t)\\
                            \end{pmatrix},\\
                    G^{p}(t)=&\begin{pmatrix}
                                b_{xx}(t)+f_z(t)\sigma_{xx}(t)& 0 \\
                                                            0 &b_{xx}(t) \\
                              \end{pmatrix}, \q
                   G^{q}(t)=\begin{pmatrix}
                                \sigma_{xx}(t)& 0\\
                                            0 & 0\\
                             \end{pmatrix}, \\
                    G^f(t)=&\begin{pmatrix}
                              [1,p^0(t),p^0(t)\sigma_x(t)+q^0(t)]D^2_{xyz}f[1,p^0(t),p^0(t)\sigma_x(t)+q^0(t)]^\intercal \\
                              0 \\
                            \end{pmatrix},
              \end{aligned}
          $$
              is  a  $2$-dimensional linear BSDE. Under  \textbf{Assumptions 1}-\textbf{3}, thanks to  \autoref{le 6.1}, it possesses a unique solution $(P,Q)\in \cS^2_{\dbF}(0,T;\dbR^2) \times \cH^{2,1}_{\dbF}(0,T;\dbR^2)$ such that for $1<\b\leq8$,
              $$
              \mathbb{E}\bigg[\sup\limits_{t\in[0,T]}|P(t)|^\beta+\Big(\int_0^T|Q(t)|^2dt\Big)^\frac{\beta}{2}\bigg]<\infty.
              $$

             Define the Hamiltonian associated with random variables $\xi,\xi'\in L^1(\O, \sF,\dbP; \dbR)$, for $(t,y,z,v,i,p^0,p^1,$  $q^0)\in [0,T] \times \dbR \times  \dbR \times V \times \cI \times \dbR \times \dbR \times \dbR$,
             \begin{align} \nonumber
                 \begin{aligned}
                     &H(t,\xi,\xi',y,z,v,i,p^0,p^1,q^0)\\
                     &=p^0b(t,\xi,\xi',v,i)+p^1 \dbE[b(t,\xi,\xi',v,i)|\sF^{\a}_{t-}]+q^0\si(t,\xi,\xi',v,i)\\
                     &\q +f(t,\xi,\xi',y,z+p^0\big[\si(t,\xi,\xi',v,i)-\si(t,\bar{X}(t),\dbE[\bar{X}(t)|\sF^{\a}_{t-}],\bar{v}(t),i)],v,i).
                \end{aligned}
             \end{align}
\begin{theorem}\label{th 3.4}
            Let \textbf{Assumptions 1}-\textbf{3} be in force. Let $\bar{v}(\cd)\in \cV_{0,T}$ be an optimal control, and
            $\bar{X}(\cd)$  $(\bar{Y}(\cd),\bar{Z}(\cd))$ the corresponding state processes of (\ref{3.1}) and (\ref{3.3}) with $\bar{v}(\cd)$, respectively. Then the maximum principle
            \begin{align} \nonumber
                \begin{aligned}
                    &H(t,\bar{X}(t),\dbE[\bar{X}(t)|\sF^{\a}_{t-}],\bar{Y}(t),\bar{Z}(t),v,\a(t-),p^0(t),p^1(t),q^0(t))\\
                    &+\frac{1}{2}P^0(t)\(\si(t,\bar{X}(t),\dbE[\bar{X}(t)|\sF^{\a}_{t-}],v,\a(t-))
                       -\si(t,\bar{X}(t),\dbE[\bar{X}(t)|\sF^{\a}_{t-}],\bar{v}(t),\a(t-))\)^2\\
                   &+\frac{1}{2}P^1(t)  \dbE\bigg[\(\si(t,\bar{X}(t),\dbE[\bar{X}(t)|\sF^{\a}_{t-}],v,\a(t-))
                      -\si(t,\bar{X}(t),\dbE[\bar{X}(t)|\sF^{\a}_{t-}],\bar{v}(t),\a(t-))\)^2\Big|\sF^{\a}_{t-}\bigg]\\
                   &\geq H(t,\bar{X}(t),\dbE[\bar{X}(t)|\sF^{\a}_{t-}],\bar{Y}(t),\bar{Z}(t),\bar{v}(t),\a(t-),p^0(t),p^1(t),q^0(t)),\
                      v\in V,\ \text{a.s.,}\  \text{a.e.}
               \end{aligned}
            \end{align}
            holds true, where $((p^0,p^1), (q^0,q^1))$ and $((P^0,P^1), (Q^0,Q^1))$ are the solutions to the first-
            and second-order adjoint equations (\ref{5.2-2}) and (\ref{5.4}), respectively.
\end{theorem}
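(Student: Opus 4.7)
The plan is a spike-variation argument adapted to the conditional mean-field, recursive-utility setting. Fix $v\in V$ and a Lebesgue point $\t\in[0,T)$ of the relevant coefficients, and for small $\e>0$ set $E_\e=[\t,\t+\e]$ and $v^\e(t)=v\,\mathbf{1}_{E_\e}(t)+\bar v(t)\,\mathbf{1}_{E_\e^c}(t)$. Let $X^\e$ and $(Y^\e,Z^\e)$ denote the corresponding forward and backward states and write $X^\e=\bar X+X^{1,\e}+X^{2,\e}+o(\e)$ and $Y^\e=\bar Y+Y^{1,\e}+Y^{2,\e}+o(\e)$ (with the analogous decomposition for $Z^\e$), where, as will be justified in Sections~3--4, $\sup_t\dbE|X^{1,\e}(t)|^2=O(\e)$, $\sup_t\dbE|X^{2,\e}(t)|^2=O(\e^2)$, with matching bounds on the backward variations.

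The first-order step is to establish the pathwise identity (\ref{1.0}),
$$Y^{1,\e}(t)=p^0(t)X^{1,\e}(t)+p^1(t)\,\dbE[X^{1,\e}(t)|\sF^{\a}_{t-}],\q t\in[0,T].$$
I would verify it by applying It\^o's formula to the right-hand side using the variational dynamics of $X^{1,\e}$ (and of $\dbE[X^{1,\e}(\cd)|\sF^{\a}_{\cd-}]$) together with the first-order adjoint (\ref{5.2-2})--(\ref{5.2-3}); the specific blocks of $F^p,F^q,F^f$---in particular the coupling $f_z\si_x$ in the $(1,1)$-entry of $F^p$ and the $\h b_{x'}$ contribution in the $(2,2)$-entry---are engineered precisely so that every cross-term cancels and the terminal datum reproduces $\F_x(T)X^{1,\e}(T)+\F_{x'}(T)\dbE[X^{1,\e}(T)|\sF^{\a}_{T-}]$. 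Uniqueness for the linear BSDE satisfied by both sides then upgrades the identity from expectation to pathwise, which is what the $(y,z)$-dependence of $f$ forces.

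For the second order I would do the analogous bookkeeping by considering
$$\tfrac12 P^0(t)(X^{1,\e}(t))^2+\tfrac12 P^1(t)\dbE[(X^{1,\e}(t))^2|\sF^{\a}_{t-}]+p^0(t)X^{2,\e}(t)+p^1(t)\dbE[X^{2,\e}(t)|\sF^{\a}_{t-}]$$
together with $Y^{2,\e}$, applying It\^o and the second-order adjoint (\ref{5.4}); the $(X^{1,\e})^2$ contributions are absorbed by $P^0$ and the $\widehat{(X^{1,\e})^2}$ contributions by $P^1$, as anticipated in item (ii). The $Z$-variation brings in the term $p^0(t)\d\si(t,v(t))\mathbf{1}_{E_\e}(t)$, which is $O(\sqrt\e)$ in $L^2$ and feeds $O(\e)$ quadratic mixed contributions through $f_{zz},f_{xz},f_{yz}$; I would handle this by introducing the auxiliary BSDE (\ref{5.4111}) to transfer these terms onto the second-order adjoint, and then use the two expectation identities (\ref{4.1-2}) and (\ref{5.5}) to re-express everything purely in terms of $(p,q,P,Q)$.

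Combining the two representations, the optimality $J(v^\e)-J(\bar v)=Y^\e(0)-\bar Y(0)\ges0$ gives, after dividing by $\e$ and sending $\e\da0$ (using Lebesgue differentiation at $\t$ and standard measurable-selection to let $v$ range over $V$), exactly the stated inequality, with the two second-order diffusion corrections $\tfrac12 P^0(\d\si)^2$ and $\tfrac12 P^1\dbE[(\d\si)^2|\sF^{\a}_{t-}]$ appearing naturally from the quadratic-variation terms produced by the spike. The main obstacle I expect is not the mechanical It\^o expansion but the pathwise (rather than in-expectation) validity of the first- and second-order representations: because $f$ depends on $(y,z)$ one cannot simply take expectations as in the non-recursive case, which is why the adjoints must be vector-valued and carry no conditional-mean-field term, and why matching the coupled dynamics of $X^{1,\e}$, $\widehat{X^{1,\e}}$, $(X^{1,\e})^2$ and $\widehat{(X^{1,\e})^2}$ simultaneously---together with the $z$-correction from $\si$ depending on the control---constitutes the delicate bookkeeping that organises the entire proof.
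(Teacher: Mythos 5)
Your proposal follows essentially the same route as the paper: spike variation, the pathwise first-order relation (\ref{4.1-2}) and the second-order relation (\ref{5.5}) obtained by applying It\^o's formula and invoking uniqueness of the variational BSDEs, the auxiliary BSDE (\ref{5.4111}) absorbing the $p^0(t)\delta\sigma(t,v(t))\mathbf{1}_{E_\e}(t)$ correction, and finally optimality $Y^\e(0)-\bar Y(0)=\tilde Y(0)+o(\e)\ges 0$ combined with Lebesgue differentiation. The only step you gloss over is the paper's closing duality argument: because the driver of (\ref{5.4111}) still contains $f_y(t)\tilde Y(t)+f_z(t)\tilde Z(t)$, one represents $\tilde Y(0)$ by pairing with the process $\Gamma$ solving $d\Gamma(t)=f_y(t)\Gamma(t)dt+f_z(t)\Gamma(t)dW(t)$, $\Gamma(0)=1$, and then uses $\Gamma(t)>0$ to strip it from the resulting inequality — a standard linear-BSDE step that is implicit in your ``dividing by $\e$ and sending $\e\da0$''.
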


            Associate with an optimal seven-tuple $(\bar{X}(\cd),\bar{v}(\cd),p^0(\cd),p^1(\cd),q^0(\cd), P^0(\cd),P^1(\cd))$
            one can define an $\cH$-function
           \begin{align} \nonumber
               \begin{aligned}
                   \cH(t,\xi,\xi',y,z,v,i)&=H(t,\xi,\xi',y,z,v,i,p^0(t),p^1(t),q^0(t))\\
                                          &\q +\frac{1}{2}P^0(t)\(\si(t,\xi,\xi',v,i)-\si(t,\bar{X}(t),\dbE[\bar{X}(t)|\sF^{\a}_{t-}],
                                               \bar{v}(t),i)\)^2\\
                                          &\q+\frac{1}{2}P^1(t)  \dbE\bigg[\(\si(t,\xi,\xi',v,i)-
                                               \si(t,\bar{X}(t),\dbE[\bar{X}(t)|\sF^{\a}_{t-}],\bar{v}(t),i)\)^2\big|\sF^{\a}_{t-}\bigg].
               \end{aligned}
           \end{align}
\begin{corollary}
          We make the same assumption as in \autoref{th 3.4}, then $\mathbb{P}$-a.s., a.e.,
          \begin{align} \nonumber
              \begin{aligned}
                  &\cH(t,\bar{X}(t),\dbE[\bar{X}(t)|\sF^{\a}_{t-}],\bar{Y}(t),\bar{Z}(t),\bar{v}(t),\a(t-))\\
                  &=\min_{v\in V}\cH(t,\bar{X}(t),\dbE[\bar{X}(t)|\sF^{\a}_{t-}],\bar{Y}(t),\bar{Z}(t),v,\a(t-)).
              \end{aligned}
          \end{align}
\end{corollary}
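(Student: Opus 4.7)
The plan is to observe that this corollary is essentially a compact reformulation of Theorem \ref{th 3.4}, so the proof reduces to identifying the two sides of the minimization inequality with the corresponding quantities already appearing in the maximum principle.

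First, I would evaluate the $\mathcal{H}$-function at the optimal control itself. When we substitute $(\xi,\xi',v,i)=(\bar X(t),\mathbb{E}[\bar X(t)\mid\mathcal{F}^\alpha_{t-}],\bar v(t),\alpha(t-))$, the two quadratic correction terms vanish identically because
\[
\sigma(t,\bar X(t),\mathbb{E}[\bar X(t)\mid\mathcal{F}^\alpha_{t-}],\bar v(t),\alpha(t-))-\sigma(t,\bar X(t),\mathbb{E}[\bar X(t)\mid\mathcal{F}^\alpha_{t-}],\bar v(t),\alpha(t-))=0,
\]
and therefore also the conditional expectation of its square is zero. Consequently
\[
\mathcal{H}(t,\bar X(t),\mathbb{E}[\bar X(t)\mid\mathcal{F}^\alpha_{t-}],\bar Y(t),\bar Z(t),\bar v(t),\alpha(t-))=H(t,\bar X(t),\mathbb{E}[\bar X(t)\mid\mathcal{F}^\alpha_{t-}],\bar Y(t),\bar Z(t),\bar v(t),\alpha(t-),p^0(t),p^1(t),q^0(t)),
\]
$\mathbb{P}$-a.s., for almost every $t$.

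Next, for an arbitrary $v\in V$, I would recognize that the expression
\[
\mathcal{H}(t,\bar X(t),\mathbb{E}[\bar X(t)\mid\mathcal{F}^\alpha_{t-}],\bar Y(t),\bar Z(t),v,\alpha(t-))
\]
is exactly the left-hand side of the inequality stated in Theorem \ref{th 3.4}: the Hamiltonian part matches the first line of the theorem, while the two half-variance terms weighted by $P^0(t)$ and $P^1(t)$ are reproduced verbatim by the definition of $\mathcal{H}$. Applying Theorem \ref{th 3.4} therefore gives
\[
\mathcal{H}(t,\bar X(t),\mathbb{E}[\bar X(t)\mid\mathcal{F}^\alpha_{t-}],\bar Y(t),\bar Z(t),v,\alpha(t-))\geq H(t,\bar X(t),\mathbb{E}[\bar X(t)\mid\mathcal{F}^\alpha_{t-}],\bar Y(t),\bar Z(t),\bar v(t),\alpha(t-),p^0(t),p^1(t),q^0(t)),
\]
for all $v\in V$, $\mathbb{P}$-a.s., a.e.

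Combining the two steps, the right-hand side above equals $\mathcal{H}(t,\bar X(t),\mathbb{E}[\bar X(t)\mid\mathcal{F}^\alpha_{t-}],\bar Y(t),\bar Z(t),\bar v(t),\alpha(t-))$, so $\bar v(t)$ attains the pointwise minimum of $v\mapsto\mathcal{H}(t,\bar X(t),\mathbb{E}[\bar X(t)\mid\mathcal{F}^\alpha_{t-}],\bar Y(t),\bar Z(t),v,\alpha(t-))$ over $V$, which is exactly the claim. There is no real obstacle here; the only subtle point to be careful about is to record that the exceptional $\mathbb{P}$-null and Lebesgue-null sets on which the inequality in Theorem \ref{th 3.4} may fail can be chosen independently of $v$ by restricting $v$ to a countable dense subset of $V$ and using continuity of $\sigma$ and $H$ in $v$ to extend to all $v\in V$.
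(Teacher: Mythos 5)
Your proposal is correct and is exactly the (omitted) argument the paper intends: the corollary is an immediate restatement of \autoref{th 3.4}, since the quadratic correction terms in $\cH$ vanish at $v=\bar v(t)$, making the left-hand side of the theorem's inequality equal to $\cH$ evaluated at an arbitrary $v$ and the right-hand side equal to $\cH$ at $\bar v(t)$. Your remark about choosing the exceptional null sets uniformly in $v$ via a countable dense subset and continuity is a sensible technical supplement, consistent with how such statements are usually read.
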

\section{First-order expansion}  \label{Sec4}
         Since the control domain is  unnecessarily convex, we borrow the approach of spike variation to study the variational equations. Precisely,  let $\e>0$ and $E_\e\subset[0,T]$ be a Borel set with Borel measure $|E_\e|=\e$, and  define
         $$v^\varepsilon(t):=
             \left\{\begin{aligned}
                        &\bar{v}(t), \q t\in[0,T] \setminus E_\e,\\
                        & v(t),\quad t\in E_\e.
                   \end{aligned}
              \right.
         $$

         Let $(\bar{v}(\cd), \bar{X}(\cd), \bar{Y}(\cd),\bar{Z}(\cd))$  be  an optimal pair and $X^\e(\cd):=X^{v^\e}(\cd)$,
         $(Y^\e(\cd),Z^\e(\cdot)):=(Y^{v^\e}(\cd),Z^{v^\e}(\cd))$ the solutions to the equations (\ref{3.1}) and (\ref{3.3})  with $v^\e(\cd)$, respectively. Recall that $\h{\xi} :=\dbE[\xi |\sF^{\a}_{t-}]$.

         Set, for $\p=b,\sigma,$
         $$\begin{aligned}
              \delta \p(t,v)&:=\p(t,\bar{X}(t),\h{\bar{X}}(t),v(t),\a(t-))-\p(t,\bar{X}(t),\h{\bar{X}}(t),\bar{v}(t),\a(t-)),\\
                     \p_x(t)&:=\frac{\pa\phi}{\pa x}(t,\bar{X}(t),\h{\bar{X}}(t),\bar{v}(t),\a(t-)),\q
                     \F_{x}(T):=\frac{\pa\Phi}{\pa x}(\bar{X}(T),\h{\bar{X}}(T),\a(T)),\\
                  \p_{xx}(t)&:=\frac{\partial^2\phi}{\partial x^2}(t,\bar{X}(t),\widehat{\bar{X}}(t),\bar{v}(t),\alpha(t-)).
         \end{aligned}$$
        $\p_{x'}(t),\p_{x'x}(t),\phi_{x'x'}(t), \delta  \phi_{x}(t,v), \delta  \phi_{x'}(t,v), \Phi_{x'}(T),\Phi_{xx}(T),\Phi_{xx'}(T),\Phi_{x'x'}(T) $ can be understood similarly.
        For convenience, we denote
        $$\begin{aligned}
              \d^1X(t)&:=X^{\e}(t)-\bar{X}(t),\q   \d^1\h{X}(t):=\h{X}^{\e}(t)-\h{\bar{X}}(t),\\
              \d^1Y(t)&:=Y^{\e}(t)-\bar{Y}(t),\q\  \d^1Z(t):=Z^{\e}(t)-\bar{Z}(t).
        \end{aligned}$$
\begin{lemma}\label{le 4.1} (\cite{Nguyen-Yin-Nguyen-21})
         Under \textbf{Assumptions 1}-\textbf{Assumptions 3}, for any $2\leq \beta\leq 8$, there exists a constant $C_\beta>0$ depending on $\b,T,L$ such that
         $$\begin{aligned}
             \mathrm{i)}\ &\dbE\Big[\sup_{t\in[0,T]}|\d^1\h{X}(t)|^{\b}\Big]\leq C_\b \e^\b,\q
             \mathrm{ii)}\  \dbE\Big[\sup_{t\in[0,T]}|\d^1X(t)|^{\b}\Big]\leq C_\b \e^\frac{\b}{2}.
         \end{aligned}$$
\end{lemma}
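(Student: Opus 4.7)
The plan is to handle the two parts separately: (ii) is the standard spike-variation BDG/Gr\"onwall estimate, while (i) is a refined conditional estimate that exploits the independence between $W$ and $\a$.

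For (ii), I would subtract the SDEs of $X^\e$ and $\bar X$, write
$$b(s,X^\e,\h X^\e,v^\e,\a)-b(s,\bar X,\h{\bar X},\bar v,\a) = \bigl[b(s,X^\e,\h X^\e,v^\e,\a)-b(s,\bar X,\h{\bar X},v^\e,\a)\bigr] + \d b(s,v)\mathbf{1}_{E_\e}(s),$$
and similarly for $\si$. Applying BDG to the stochastic integral and Lipschitz continuity in $(x,x')$, the spike piece of the diffusion contributes $\dbE[(\int_0^T|\d\si(s,v)|^2\mathbf{1}_{E_\e}(s)\,ds)^{\b/2}] \leq C\e^{\b/2}\sup_s\dbE[|\d\si(s,v)|^\b]$ by H\"older, which is finite thanks to the uniform $L^8$-bound on $v\in\cV_{0,T}$ and the growth condition on $\si$. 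The remaining Lipschitz pieces feed into Gr\"onwall to deliver $\dbE[\sup_{t\le T}|\d^1 X(t)|^\b]\leq C_\b\e^{\b/2}$.

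For (i), the key observation is that since $W\perp\a$, for any $\dbF$-progressively measurable integrand $\psi$ with adequate integrability one has $\dbE[\int_0^t\psi(s)\,dW(s)\mid\sF^\a_{t-}]=0$. Taking $\dbE[\cdot\mid\sF^\a_{t-}]$ on both sides of the SDE for $\d^1 X$ therefore annihilates the stochastic integral:
$$\d^1\h X(t) = \dbE\Big[\int_0^t\bigl\{b(s,X^\e,\h X^\e,v^\e,\a)-b(s,\bar X,\h{\bar X},\bar v,\a)\bigr\}\,ds\,\Big|\,\sF^\a_{t-}\Big].$$
Heuristically this is $O(\e)$: the spike perturbation lives on a set of Lebesgue measure $\e$, and the Lipschitz increments close via Gr\"onwall. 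To make it rigorous, I would Taylor-expand at the optimal path,
$$b(s,X^\e,\h X^\e,v^\e,\a)-b(s,\bar X,\h{\bar X},v^\e,\a) = b_x(s,\bar X,\h{\bar X},v^\e,\a)\,\d^1 X(s) + b_{x'}(s,\bar X,\h{\bar X},v^\e,\a)\,\d^1\h X(s) + R(s),$$
and split the $s$-integral into $[0,t]\setminus E_\e$ and $[0,t]\cap E_\e$. Outside $E_\e$, $v^\e=\bar v$, so $b_x(\cdot)$ is $\dbF^\a$-adapted by \textbf{Assumption 3} and pulls out of the conditional expectation; together with the tower identity $\dbE[\d^1 X(s)\mid\sF^\a_{t-}]=\d^1\h X(s)$ for $s\le t$ (again from $W\perp\a$), this converts the term into $\int_{[0,t]\setminus E_\e}(\text{bounded})\,\d^1\h X(s)\,ds$. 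Inside $E_\e$, H\"older combined with (ii) yields a $\b$-th moment bound $\e^{\b-1}\int_{E_\e}\dbE|\d^1 X(s)|^\b\,ds \le C\e^{3\b/2}=o(\e^\b)$. The $b_{x'}\d^1\h X$ contribution is trivial since $\d^1\h X$ is $\sF^\a$-measurable, and the quadratic remainder $R$ is $O(\e^\b)$ in $L^\b$ by (ii). Gr\"onwall on $\dbE[\sup_{s\le t}|\d^1\h X(s)|^\b]$ then yields (i).

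The main obstacle is the step $\dbE[|\d^1 X(s)|\mid\sF^\a_{t-}]$: a direct Jensen bound delivers only $\e^{\b/2}$, collapsing (i) into (ii). The Taylor expansion reinterprets this absolute value as a signed linear functional of $\d^1 X(s)$, after which \textbf{Assumption 3} lets the adapted coefficient $b_x(\cdot)$ be extracted from the conditional expectation and the tower identity $\dbE[\d^1 X(s)\mid\sF^\a_{t-}]=\d^1\h X(s)$ be invoked. It is precisely this interplay that produces the sharp $\e^\b$ scaling in (i).
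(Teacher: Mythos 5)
Your overall route is sound, and it is in fact more informative than what the paper prints: the paper disposes of the lemma in one line, deferring item ii) to Proposition 3.1 of \cite{Nguyen-Yin-Nguyen-21} and declaring item i) ``an immediate consequence of Gronwall lemma.'' As you correctly observe, a naive Lipschitz-plus-Gronwall bound on $\d^1\h{X}$ can only return $\e^{\b/2}$, because $\dbE[|\d^1X(s)|\,|\,\sF^\a_{s-}]$ is of order $\sqrt{\e}$; the genuine mechanism is exactly the one you spell out -- condition away the stochastic integral (this is Xiong's Lemma 5.4, which the paper itself invokes for (\ref{4.1-1-1})), linearize $b$ at the optimal path, pull the $\dbF^\a$-adapted coefficient $b_x$ out of the conditional expectation via \textbf{Assumption 3}, use $\dbE[\d^1X(s)|\sF^\a_{s-}]=\d^1\h{X}(s)$ (valid by the independence of $W$ and $\a$), and close with Gronwall. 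One cosmetic improvement: run the Gronwall on the filtered form $\d^1\h{X}(t)=\int_0^t\dbE[\,\cdot\,|\sF^\a_{s-}]\,ds$ rather than conditioning everything on $\sF^\a_{t-}$, so that the error term does not depend on $t$ through the conditioning $\si$-algebra; and estimate explicitly the spike drift $\d b(s,v)\mathbf{1}_{E_\e}(s)$ inside $E_\e$, which is the term actually producing the leading $C_\b\e^\b$ (it is routine, via $\e^{\b-1}\int_{E_\e}\dbE|\d b|^\b ds\le C\e^\b$, but it is the main contribution and should not be left implicit).

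The one genuine gap is the range of $\b$. Your bound on the quadratic Taylor remainder, $|R(s)|\le C(|\d^1X(s)|^2+|\d^1\h{X}(s)|^2)$, is controlled in $L^\b$ by item ii) at exponent $2\b$: you need $\dbE[\sup_t|\d^1X(t)|^{2\b}]\le C\e^{\b}$. Since ii) is only available for exponents up to $8$ (admissible controls have only eighth moments, cf. \autoref{def 3.1} and (\ref{3.2})), your argument as written proves item i) only for $2\le\b\le 4$, not for the full stated range $2\le\b\le 8$; the same obstruction reappears if you instead try to bound $\dbE\big[\big(\int_0^T\dbE[|\d^1X(s)|^2|\sF^\a_{s-}]\,ds\big)^\b\big]$ directly, which again calls for $2\b$-th moments of $v$. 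To cover $\b\in(4,8]$ one needs either stronger integrability of the controls or the estimates as established in \cite{Nguyen-Yin-Nguyen-21} (which is how the paper itself sidesteps the issue); you should either restrict the claim to $\b\le 4$ or state the extra moment hypothesis needed for the remainder term.
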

        \begin{proof}
        The item $\mathrm{i)}$ is an immediate consequence of Gronwall lemma; the item $\mathrm{ii)}$ comes from  Proposition 3.1 \cite{Nguyen-Yin-Nguyen-21}.
        \end{proof}
\begin{lemma}\label{le 4.1}
         Let \textbf{Assumptions 1}-\textbf{3} hold true, for any $2\leq \beta\leq 4$, there exists a constant $C_\beta>0$ depending on $\b,T,L$ such that
         $$\begin{aligned}
             &\dbE\Big[\sup_{t\in[0,T]}|\d^1Y(t)|^{\b}+\(\int_0^T|\d^1Z(t)|^2dt\)^\frac{\b}{2}\Big]\leq C_\b \e^{\frac{\b}{2}}.
         \end{aligned}$$
\end{lemma}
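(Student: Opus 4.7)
The plan is to recognize that $(\delta^1 Y(\cdot),\delta^1 Z(\cdot))$ solves the BSDE on $[0,T]$ with driver
\begin{equation*}
\Delta f(t):=f(t,X^\e(t),\h X^\e(t),Y^\e(t),Z^\e(t),v^\e(t),\a(t-))-f(t,\bar X(t),\h{\bar X}(t),\bar Y(t),\bar Z(t),\bar v(t),\a(t-))
\end{equation*}
and terminal value $\Delta\Phi:=\Phi(X^\e(T),\h X^\e(T),\a(T))-\Phi(\bar X(T),\h{\bar X}(T),\a(T))$. First I would split the driver as
\begin{equation*}
\Delta f(t)=\Big[f(t,X^\e,\h X^\e,Y^\e,Z^\e,v^\e,\a(t-))-f(t,X^\e,\h X^\e,\bar Y,\bar Z,v^\e,\a(t-))\Big]+R(t),
\end{equation*}
where the bracket, by Assumption~2(i) and the mean value theorem, equals $\alpha_Y(t)\delta^1 Y(t)+\alpha_Z(t)\delta^1 Z(t)$ with $|\alpha_Y|,|\alpha_Z|\le L$, and
\begin{equation*}
R(t):=f(t,X^\e(t),\h X^\e(t),\bar Y(t),\bar Z(t),v^\e(t),\a(t-))-f(t,\bar X(t),\h{\bar X}(t),\bar Y(t),\bar Z(t),\bar v(t),\a(t-))
\end{equation*}
is a $(Y,Z)$-free forcing term. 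The standard $L^\beta$ a priori estimate for BSDEs with bounded linear drivers (Lemma~6.2 in the appendix) then reduces the task to bounding $\dbE|\Delta\Phi|^\beta$ and $\dbE\bigl(\int_0^T|R(t)|\,dt\bigr)^\beta$ by $C_\beta\e^{\beta/2}$.

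For the terminal term, Assumption~2(i) yields
\begin{equation*}
|\Delta\Phi|\le L\bigl(1+|X^\e(T)|+|\bar X(T)|+|\h X^\e(T)|+|\h{\bar X}(T)|\bigr)\bigl(|\delta^1 X(T)|+|\delta^1 \h X(T)|\bigr),
\end{equation*}
so Cauchy--Schwarz together with the $L^{2\beta}$ moment bound (\ref{3.2}) (available because $2\beta\le 8$) and the previous lemma applied at exponent $2\beta$ yields $\dbE|\Delta\Phi|^\beta\le C_\beta\e^{\beta/2}$. For the integral term I would decompose $R=R\mathbf{1}_{E_\e^c}+R\mathbf{1}_{E_\e}$. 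On $E_\e^c$ we have $v^\e=\bar v$, so with $K(t):=1+|X^\e(t)|+|\bar X(t)|+|\h X^\e(t)|+|\h{\bar X}(t)|+|v^\e(t)|+|\bar v(t)|$,
\begin{equation*}
|R(t)|\mathbf{1}_{E_\e^c}(t)\le L\,K(t)\bigl(|\delta^1 X(t)|+|\delta^1 \h X(t)|\bigr),
\end{equation*}
and Jensen in time together with Cauchy--Schwarz and the same $L^{2\beta}$ bounds already produce a $C_\beta\e^{\beta/2}$ estimate on this piece. On $E_\e$, since $|E_\e|=\e$, H\"older's inequality gives
\begin{equation*}
\Big(\int_{E_\e}|R(t)|\,dt\Big)^\beta\le \e^{\beta/2}\Big(\int_0^T|R(t)|^2\,dt\Big)^{\beta/2};
\end{equation*}
combined with $|R|^2\le C\,K^2(|\delta^1 X|^2+|\delta^1 \h X|^2+|v-\bar v|^2)$ and the integrability $\sup_t\dbE|v(t)|^8<\infty$ built into \autoref{def 3.1}, Jensen and Cauchy--Schwarz then yield that this contribution is also $\le C_\beta\e^{\beta/2}$.

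The main obstacle is the only-Lipschitz-with-linear-growth structure of $f$ and $\Phi$ in $(x,x',v)$: the effective Lipschitz constant is $L(1+|x|+|x'|+|v|)$ rather than a pure constant, so one cannot simply push an $L^\beta$ estimate of $\delta^1 X$ through multiplicatively. The trick is to buy the extra $\e^{1/2}$ factor on $E_\e$ from the H\"older step that exploits $|E_\e|=\e$, and then to absorb the growth factor $K(t)$ via the higher-order moment bound (\ref{3.2}) and the $L^8$-integrability of admissible controls. This is exactly why the lemma is stated for $\beta\le 4$: any larger $\beta$ would require $2\beta>8$ moments on $X^\e,\bar X,v,\bar v$, which are not assumed.
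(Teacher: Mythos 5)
Your proposal is correct and follows essentially the same route as the paper: view $(\delta^1Y,\delta^1Z)$ as solving a BSDE whose driver is Lipschitz in $(y,z)$, apply the appendix estimate (\autoref{le 6.2}) to reduce everything to the terminal increment and a $(y,z)$-free forcing term, and then bound these by H\"older/Cauchy--Schwarz using the moment bound (\ref{3.2}), the $\delta^1X,\delta^1\widehat X$ estimates, the $L^8$-integrability of controls, and $|E_\varepsilon|=\varepsilon$. The only cosmetic difference is that the paper linearizes $f$ in all of $(x,x',y,z)$ with integrated derivatives and isolates the spike term $\delta f(s,v(s))\mathbf{1}_{E_\varepsilon}(s)$, whereas you keep the $(x,x',v)$-increment as a single residual $R$ and split it over $E_\varepsilon$ versus its complement; the resulting estimates are the same.
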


         \begin{proof}
         Notice that
          $$\begin{aligned}
               \d^1Y(t)&=\F_x^\rho(T)\d^1X(T)+\Phi_{x'}^\rho(T)\delta^1\widehat{X}(T)+\int_t^T\(f_x^{\rho\e}(s)\d^1X(s)
                               +f_{x'}^{\rho\e}(s)\d^1\h{X}(s)\\
                       &\q+f_{y}^{\rho\e}(s)\d^1Y(s)+f_{z}^{\rho\e}(s)\d^1Z(s)+\d f(s,v(s))\mathbf{1}_{E_\e}(s)\)ds-\int_t^T \d^1 Z(s)dW(s),\\
          \end{aligned}$$
          where
          \begin{equation}\label{4.0-1}
              \begin{aligned}
                  \d f(s,v(s))&=f(s,\bar{X}(s),\h{\bar{X}}(s),\bar{Y}(s),\bar{Z}(s),v(s),\a(s-))\\
                              &\qq\qq\qq\qq\qq\qq  -f(s,\bar{X}(s),\h{\bar{X}}(s),\bar{Y}(s),\bar{Z}(s),\bar{v}(s),\a(s-)),\\
               f_x^{\rho\e}(s)&=\int_0^1\pa_x f(s,\bar{X}(s)+\rho(X^\e(s)-\bar{X}(s)),\h{\bar{X}}(s)+\rho(\h{X}^\e(s)-\h{\bar{X}}(s))\\%
                              &\qq\qq \bar{Y}(s)+\rho(Y^\varepsilon(s)-\bar{Y}(s)),\bar{Z}(s)+\rho(Z^\varepsilon(s)-\bar{Z}(s)),v^\e(s),
                                       \a(s-))d\rho,\\
                \F_x^\rho(T)&=\int_0^1\pa_x \F(\bar{X}(T)+\rho(X^\e(T)-\bar{X}(T)), \h{\bar{X}}(T)+\rho(\h{X}^\e(T)-\h{\bar{X}}(T)))d\rho.
             \end{aligned}
          \end{equation}
          $\F_{x'}^\rho(T), f_{x'}^{\rho\e}(s), f_y^{\rho\e}(s),f_z^{\rho\e}(s),\d f_z(s,v(s)),\cd\cd\cd$ can be defined similarly.\\
           According to \autoref{le 6.2}, we have
           \begin{equation}\label{4.1-1-1-1111}
               \begin{aligned}
                  \dbE\[|\d^1Y(t)|^\b+\(\int_0^T|\d^1Z(t)|^2dt\)^\frac{\b}{2}\]
                  \leq\dbE\[|\F_x^\rho(T)\d^1X(T)+\F_{x'}^\rho(T)\d^1\h{X}(T)|^\b \]&\\
                   +\dbE\bigg[\(\int_0^T |f_x^{\rho\e}(s)\d^1X(s)+f_{x'}^{\rho\e}(s)\d^1\h{X}(s)+\d
                         f(s,v(s))\mathbf{1}_{E_\e}(s)|ds\)^\b\bigg].&
               \end{aligned}\end{equation}
           On the one hand, since the first-order derivatives of $\Phi$ are bounded by $L(1+|x|+|x'|)$,  H\"{o}lder inequality and the item $\mathrm{i)}$ allows to show
           \begin{align} \nonumber
               \begin{aligned}
                   &\dbE\[|\F_x^\rho(T)\d^1X(T)+\F_{x'}^\rho(T)\d^1\h{X}(T)|^\b \]\\
                   &\leq C_\b \dbE\bigg[\((1+|\bar{X}(T)|+|X^\e(T)|+|\h{\bar{X}}(T)|+|\h{X}^\e(T)|\)^\beta
                   (|\d^1X(T)|^\b+\d^1\h{X}(T)|^\b)\bigg]\\
                   &\leq C_\b\bigg\{\dbE\[\sup_{s\in[0,T]}(|\d^1X(s)|^{2\b}+|\d^1\widehat{X}(s)|^{2\b})\]\bigg\}^\frac{1}{2}\\
                   &\qq \cd
                   \bigg\{\dbE \[\sup_{s\in[0,T]}(1+|\bar{X}(s)|+|X^\e(s)|+|\h{\bar{X}}(s)|+|\h{X}^\e(s)|)^{2\b}\]\bigg\}^\frac{1}{2}
                   \leq C_\b \e^{\frac{\b}{2}}.
               \end{aligned}
           \end{align}
           One the other hand,  the item $\mathrm{(i)}$ of \textbf{Assumption 2} implies that
           \begin{align} \nonumber
               \begin{aligned}
                   &\dbE\bigg[\(\int_0^T\d f(s,v(s))\mathbf{1}_{E_\e}(s)|ds\)^\b\bigg]\\
                   &\leq \dbE\bigg[\(\int_{E_\e} (1+|\bar{X}(s)|+|\h{\bar{X}}(s)|+|v(s)|+|\bar{v}(s)|)|v(s)-\bar{v}(s)|ds\)^\b\bigg]\\
                   &\leq C_\b \e^{\b-1}\dbE\bigg[\int_{E_\e} (1+|\bar{X}(s)|^{2\b}+|\hat{\bar{X}}(s)|^{2\b}+|v(s)|^{2\b}
                           +|\bar{v}(s)|^{2\b})ds\bigg]\\
                   &\leq C_\b \e^{\b} \sup_{t\in[0,T]}\dbE\Big [1+|\bar{X}(s)|^{2\b}+|\hat{\bar{X}}(s)|^{2\b}+|v(s)|^{2\b}
                           +|\bar{v}(s)|^{2\b}\Big].
               \end{aligned}
           \end{align}
          In addition, since $|f_x^{\rho\e}(s)|+|f_{x'}^{\rho\e}(s)|\leq L(1+|\bar{X}(s)|+|X^\varepsilon(s)|+|{\h X}^\e(s)|
          +|{\widehat{\bar{X}}}(s)|+|v^\e(s)|)$, we derive
           \begin{equation}\label{4.1-1-4}
               \begin{aligned}
               &\dbE\bigg[\(\int_0^T |f_x^{\rho\e}(s)\d^1X(s)+f_{x'}^{\rho\e}(s)\d^1\h{X}(s)|ds\)^\b\bigg]
               \leq C_\b\bigg\{\dbE\[\sup_{t\in[0,T]}(|\d^1X(s)|+|\d^1\h{X}(s)|)^{2\b}\]\bigg\}^\frac{1}{2}\\
               &\qq\cd
               \bigg\{\dbE\[\int_0^T (1+|\bar{X}(s)|^{2\b}+|X^\e(s)|^{2\b}+|{\h X}^\e(s)|^{2\b}
                  +|{\widehat{\bar{X}}}(s)|^{2\b}+|v^\e(s)|^{2\b})ds \]\bigg\}^\frac{1}{2}
              \leq C_\b \e^{\frac{\b}{2}}.
               \end{aligned}
           \end{equation}
           Finally, combining (\ref{4.1-1-1-1111}) and (\ref{4.1-1-4}), we prove the desired result.
           \end{proof}

           Next, we introduce the first-order variational equation
           \begin{equation}\label{4.1}
               \left\{\begin{aligned}
                   d X^{1,\e}(t)&=\(b_x(t)X^{1,\e}(t)+b_{x'}(t)\h{X}^{1,\e}(t)+\d b(t,v(t))\mathbf{1}_{E_\e}(t)\)dt\\
                                &\q +\(\si_x(t)X^{1,\e}(t)+\si_{x'}(t)\h{X}^{1,\e}(t)+\d\sigma(t,v(t))\mathbf{1}_{E_\e}(t)\)dW(t),\\
                     X^{1,\e}(0)&=0,
               \end{aligned}\right.
           \end{equation}
           which is a linear conditional McKean-Vlasov equation (Recall that $\h{X}^{1,\e}(t)=\dbE[X^{1,\e}(t)|\sF^\a_{t-}]$). Under \textbf{Assumption 1}, it possesses a  unique solution $X^{1,\e}(\cd)\in \cS^\b_\dbF(0,T;\dbR)$. Applying Lemma 5.4 \cite{Xiong-08} to (\ref{4.1}) and recall that $b_x(\cd)$ is $\dbF^{\a}$-adapted, we obtain
           \begin{equation}\label{4.1-1-1}
               \left\{\begin{aligned}
                   d \h{X}^{1,\e}(t) &=\bigg\{\( b_x(t)+ \h{b}_{x'}(t)\)\h{X}^{1,\e}(t)+\h{\d b(t,v(t))} \mathbf{1}_{E_\e}(t)\bigg\}dt,\\
                      \h{X}^{1,\e}(0)&=0.
               \end{aligned}\right.
           \end{equation}

          For the solutions to (\ref{4.1}) and (\ref{4.1-1-1}) we have the following moment estimates.
\begin{proposition}\label{pro 4.2}
          Let \textbf{Assumption 1} and \textbf{Assumption 3} be in force. For any $2\leq\b \leq 8$, there exist a constant $C_\b>0$ depending on $\b$
          and a function $\rho:(0,+\infty)\ra(0,+\infty)$ with $\rho(\e)\rightarrow0$ as $\e\da0$ such that
          \begin{equation}\label{4.3}
              \begin{aligned}
                  \mathrm{i)}\q &\dbE\[\sup_{t\in[0,T]}|\h{X}^{1,\e}(t)|^\b\]\leq C_\b\e^\b,\qq
                \mathrm{ii)}\q \dbE\[\sup_{t\in[0,T]}|X^{1,\e}(t)|^\b\]\leq C_\b\e^\frac{\b}{2},\\
                 \mathrm{iii)}\q &  \dbE\[\sup_{t\in[0,T]}|\widehat{X}^{\e}(t)-\widehat{\bar{X}}(t)-\widehat{X}^{1,\e}(t)|^{\b}\]
                 \leq C_\b\e^{\frac{3\b}{2}},\\
                 \mathrm{iv)}\q &   \dbE\[\sup_{t\in[0,T]}|X^{\e}(t)-\bar{X}(t)-X^{1,\e}(t)|^{\b}\]\leq C_\b\e^{\b}.
             \end{aligned}
          \end{equation}
\end{proposition}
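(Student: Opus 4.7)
The plan is to prove the four bounds in the order (i), (ii), (iv), (iii), each building on the previous.

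For (i), since $\h X^{1,\e}$ solves the linear random ODE (\ref{4.1-1-1}) with bounded coefficient $b_x+\h b_{x'}$, a pathwise Gronwall inequality gives $\sup_t|\h X^{1,\e}(t)|\le C\int_0^T|\h{\delta b(s,v(s))}|\mathbf{1}_{E_\e}(s)\,ds$; then H\"older with $|E_\e|=\e$, the growth bound on $\delta b$, and admissibility ($\sup_t\dbE|v(t)|^8<\infty$) deliver $O(\e^\b)$. For (ii), I would apply BDG and Gronwall to (\ref{4.1}). The drift forcing $\int_0^t\delta b\,\mathbf{1}_{E_\e}\,ds$ is $O(\e^\b)$ as above; the dominant diffusion forcing satisfies $\dbE(\int_0^T|\delta\sigma|^2\mathbf{1}_{E_\e}\,ds)^{\b/2}\le C\e^{\b/2}$; the $b_x X^{1,\e}$ and $\sigma_x X^{1,\e}$ terms are absorbed by Gronwall, and the $\h X^{1,\e}$-linear contributions are of lower order by (i).

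For (iv), set $\xi^\e:=X^\e-\bar X-X^{1,\e}$ and expand $b(X^\e,\h X^\e,v^\e)-b(\bar X,\h{\bar X},\bar v)=\int_0^1[b_x^\rho\delta^1 X+b_{x'}^\rho\delta^1\h X]\,d\rho+\delta b\,\mathbf{1}_{E_\e}$, so that the spike forcing cancels exactly with that of (\ref{4.1}). The residual SDE for $\xi^\e$ has a linear part (absorbed by Gronwall) and Taylor remainders of the form $(\int_0^1(b_x^\rho-b_x)\,d\rho)X^{1,\e}$. I would split on $E_\e$ versus its complement: off $E_\e$, Assumption 1(ii) gives $|b_x^\rho-b_x|\le L(|\delta^1 X|+|\delta^1\h X|)$, so Cauchy--Schwarz together with the $O(\e^{1/2})$ bound on $\delta^1 X$ (from the preceding Lemma 4.1) and (ii) yields $O(\e^\b)$; on $E_\e$, the crude bound $|b_x^\rho-b_x|\le 2L$ paired with H\"older and $|E_\e|=\e$ gives the even smaller $O(\e^{3\b/2})$. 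The $\sigma$-remainders are treated analogously via BDG.

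The hardest step is (iii). I would take $\dbE[\cdot|\sF^\a_{t-}]$ of the $\xi^\e$-SDE: the $dW$-integral vanishes (since $W$ is independent of $\sF^\a$), and the $\sF^\a$-adaptedness of $b_x, b_{xx}$ under Assumption 3 allows these factors to be pulled outside the conditional expectation, producing a linear ODE for $\h\xi^\e$ in which the spike term again cancels. The sharper $\e^{3/2}$ rate then emerges from the combined interplay of: (a) the refined bound $\h X^{1,\e}\sim\e$ from (i), upgrading products containing $\h X^{1,\e}$ from the naive $\e^{1/2}$ to $\e$; (b) the $\xi^\e$ part of $\delta^1 X=X^{1,\e}+\xi^\e$, which is $O(\e)$ by (iv) and, multiplied by a Taylor factor of size $\e^{1/2}$, yields $\e^{3/2}$; and (c) H\"older bounds for the on-$E_\e$ remainders, where $|E_\e|=\e$ promotes pointwise $O(1)$ factors to integrated $O(\e^{3/2})$. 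A final Gronwall argument closes the bound at $\e^{3\b/2}$. The main obstacle is tracking these competing orders carefully and ensuring that each ``naively $\e$'' term actually integrates to $\e^{3/2}$ after conditioning and $E_\e$-localization; this crucially relies on Assumption 3 to cleanly control the second-order structure inside the conditional expectation.
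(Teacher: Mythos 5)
Your handling of items i), ii) and iv) is sound and, in fact, more self-contained than the paper, which disposes of i) and iii) with a one-line appeal to Gronwall and simply cites Proposition 3.1 of Nguyen--Yin--Nguyen for ii) and iv); your direct arguments (Gronwall plus H\"older with $|E_\e|=\e$ for the hatted equation, BDG/Gronwall with the $\e^{\b/2}$ diffusion forcing for ii), and the spike-cancellation plus on/off-$E_\e$ splitting for iv)) are the standard route and give the stated rates. (One minor caveat: for the largest exponents $\b$ close to $8$ your Cauchy--Schwarz steps pair two factors each needing $2\b$-th moments, which exceeds the eighth-moment budget coming from \autoref{def 3.1}; this is a bookkeeping issue shared with the paper's range of $\b$, not a conceptual one.)

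Item iii) is where your argument has a genuine gap. After conditioning the equation for $\delta^2X=X^\e-\bar X-X^{1,\e}$ on $\sF^\a_{t-}$ and pulling out $b_x$ via \textbf{Assumption 3}, the forcing contains the quadratic Taylor remainder of $b$, schematically $\tfrac12\,\dbE\big[b^\rho_{xx}\,(\delta^1X)^2\,\big|\,\sF^\a_{t-}\big]$ with $\delta^1X=X^{1,\e}+\delta^2X$. Your accounting covers the cross terms with $\delta^1\h X$ (your (a)), the $\delta^2X$-part of $\delta^1X$ (your (b)) and the on-$E_\e$ pieces (your (c)), but it never addresses the dominant piece $\dbE[b^\rho_{xx}(X^{1,\e})^2|\sF^\a_{t-}]$. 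Since $X^{1,\e}$ is of order $\e^{1/2}$ through its stochastic integral, this conditional expectation is of exact order $\e$ — precisely the point the paper itself emphasizes in the introduction when it insists that $\dbE[(X^{1,\e}(t))^2|\sF^\a_{t-}]$ is $O(\e)$ and not $o(\e)$, which is the very reason the second component $P^1$ of the second-order adjoint equation (\ref{5.4}) is needed. Neither conditioning (which only kills the martingale part) nor $E_\e$-localization (this term lives on all of $[0,T]$ after the spike, not just on $E_\e$) upgrades it to $\e^{3/2}$. Hence your Gronwall closure at the rate $\e^{3\b/2}$ does not follow; as written the argument only yields $\e^{\b}$ for iii) unless the quadratic remainder is absent, i.e.\ unless $b$ is affine in $(x,x')$ along the relevant region (as in the linear-quadratic case, or under a strict reading of \textbf{Assumption 3} that forces $b_{xx}$ to drop out). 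If you want to defend the $\e^{3\b/2}$ rate in the generality claimed, you must either exhibit a cancellation for this specific term or impose the structural condition explicitly; your closing remark that each ``naively $\e$'' term integrates to $\e^{3/2}$ is exactly the assertion that fails here.
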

         \begin{proof}
          \no  Recall $b_x,\h{b}_{x'}$ are bounded and $\h{\d b(t,v(t))}\leq L\dbE[(1+|\bar{X}(s)|+|\hat{\bar{X}}(s)|+|v(s)|+|\bar{v}(s)|) | \sF_{t-}^\a]$,  the items $\mathrm{i)}$  and $\mathrm{iii)}$ follow from Gronwall lemma. The items $\mathrm{ii)}$ and $\mathrm{iv)}$ refer to Proposition 3.1 \cite{Nguyen-Yin-Nguyen-21}.
          \end{proof}

          We now consider the first-order variational BSDE on $[0,T]$:
          \begin{equation}\label{4.1-1}
              \left\{\begin{aligned}
                   d Y^{1,\e}(t)&=-\bigg\{f_x(t)X^{1,\e}(t)+f_{x'}(t)\h{X}^{1,\e}(t)+f_y(t)Y^{1,\e}(t)
                                   +f_z(t)[Z^{1,\e}(t)-p^0(t)\d\si(t,v(t))\mathbf{1}_{E_\e}(t)]\\
                                &\q-\mathbf{1}_{E_\varepsilon}(t)\Big(q^0(t)\d\si(t,v(t))+p^0(t)\d b(t,v(t))
                                    +p^1(t) \h{\d b(t,v(t))}\)\bigg\}dt+Z^{1,\e}(t)dW(t),\\
                     Y^{1,\e}(T)&=\F_x(T)X^{1,\e}(T)+\F_{x'}(T)\h{X}^{1,\e}(T).
             \end{aligned}\right.
          \end{equation}
          Under \textbf{Assumption 2}, for $2\leq \b \leq 4$ the above linear BSDE exists a unique solution $(Y^{1,\e},Z^{1,\varepsilon})\in \cS^\b_{\dbF}(0,T;\dbR)  \ts  \cH^{2,\frac{\b}{2}}_{\dbF}(0,T;\dbR).$

\ms

          The following lemma reveals the relation of $(Y^{1,\varepsilon}(t),Z^{1,\varepsilon}(t))$  and $X^{1,\varepsilon}(t),\widehat{X}^{1,\varepsilon}(t) $.
\begin{lemma}\label{le 4.3}
          Let \textbf{Assumptions 1}-\textbf{3} be in force, then we have:
          \begin{equation}\label{4.1-2}
              \left\{\begin{aligned}
              &Y^{1,\e}(t)=p^0(t)X^{1,\e}(t)+p^1(t)\h{X}^{1,\e}(t),\\
              &Z^{1,\e}(t)=[p^0(t)\si_x(t)+q^0(t)]X^{1,\e}(t)+[p^0(t)\si_{x'}(t)+q^1(t)]\h{X}^{1,\e}(t)\\
              &\qq\qq      +p^0(t)\delta\sigma(t,v(t))\mathbf{1}_{E_\varepsilon}(t).
             \end{aligned}\right.
          \end{equation}
\end{lemma}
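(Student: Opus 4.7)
The plan is to prove both identities in (\ref{4.1-2}) simultaneously: I define $\widetilde{Y}(t):=p^0(t)X^{1,\e}(t)+p^1(t)\h{X}^{1,\e}(t)$, compute $d\widetilde{Y}$ by It\^o's product rule applied separately to $p^0X^{1,\e}$ and $p^1\h{X}^{1,\e}$, read off the candidate martingale integrand $\widetilde{Z}$, verify that $(\widetilde{Y},\widetilde{Z})$ solves the same linear BSDE (\ref{4.1-1}) as $(Y^{1,\e},Z^{1,\e})$, and conclude by uniqueness. The inputs are: (\ref{4.1}) for $X^{1,\e}$; (\ref{4.1-1-1}) for $\h{X}^{1,\e}$, which is of finite variation so $d\langle p^1,\h{X}^{1,\e}\rangle=0$; and the scalar equations for $p^0,p^1$ read off from (\ref{5.2-2})--(\ref{5.2-3}). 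The only nontrivial cross-variation is
$$d\langle p^0,X^{1,\e}\rangle_t=q^0(t)\bigl[\sigma_x(t)X^{1,\e}(t)+\sigma_{x'}(t)\h{X}^{1,\e}(t)+\delta\sigma(t,v(t))\mathbf{1}_{E_\e}(t)\bigr]dt.$$

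Collecting the $dW(t)$-coefficient of $d\widetilde{Y}$ immediately gives
$$\widetilde{Z}(t)=[p^0(t)\sigma_x(t)+q^0(t)]X^{1,\e}(t)+[p^0(t)\sigma_{x'}(t)+q^1(t)]\h{X}^{1,\e}(t)+p^0(t)\delta\sigma(t,v(t))\mathbf{1}_{E_\e}(t),$$
which is exactly the expression claimed in (\ref{4.1-2}). For the drift I expand the first and second rows of $F^pp+F^qq+F^f$ using (\ref{5.2-3}) and sum the contributions from $p^0\,dX^{1,\e}$, $X^{1,\e}\,dp^0$, $p^1\,d\h{X}^{1,\e}$, $\h{X}^{1,\e}\,dp^1$, and the cross-variation. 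The terms $p^0b_xX^{1,\e}$, $p^0b_{x'}\h{X}^{1,\e}$, $q^0\sigma_xX^{1,\e}$, $q^0\sigma_{x'}\h{X}^{1,\e}$, $p^1b_x\h{X}^{1,\e}$, and $p^1\h{b}_{x'}\h{X}^{1,\e}$ cancel pairwise. Substituting $\widetilde{Y}=p^0X^{1,\e}+p^1\h{X}^{1,\e}$ and $\widetilde{Z}-p^0\delta\sigma\mathbf{1}_{E_\e}=[p^0\sigma_x+q^0]X^{1,\e}+[p^0\sigma_{x'}+q^1]\h{X}^{1,\e}$ into the remainder reproduces exactly the generator of (\ref{4.1-1}) evaluated at $(\widetilde{Y},\widetilde{Z})$. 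The terminal condition matches because $p(T)=F^\Phi(T)$ gives $\widetilde{Y}(T)=\Phi_x(T)X^{1,\e}(T)+\Phi_{x'}(T)\h{X}^{1,\e}(T)=Y^{1,\e}(T)$. By \autoref{le 3.2}, \autoref{pro 4.2}, and the integrability of $(p,q)$ stated after (\ref{5.2-3}), both $(\widetilde{Y},\widetilde{Z})$ and $(Y^{1,\e},Z^{1,\e})$ live in the space in which the Lipschitz linear BSDE (\ref{4.1-1}) admits at most one solution, so $(\widetilde{Y},\widetilde{Z})=(Y^{1,\e},Z^{1,\e})$.

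The main --- and essentially only --- obstacle is the bookkeeping needed to witness these cancellations and to recognize the design of the adjoint equation that makes them work. The off-diagonal entry $b_{x'}+f_z\sigma_{x'}$ of $F^p$, the off-diagonal entry $\sigma_{x'}$ of $F^q$, and the diagonal $(2,2)$-entry $b_x+\h{b}_{x'}+f_y$ of $F^p$ are precisely calibrated so that the drift coefficients of $p^0\h{X}^{1,\e}$ and $q^0\h{X}^{1,\e}$ vanish after summing $X^{1,\e}\,dp^0$ and $\h{X}^{1,\e}\,dp^1$. This mechanism is exactly the pointwise identity announced as (\ref{1.0}) in the introduction, and it explains why in the recursive-utility setting the first-order adjoint equation cannot be scalar (as in \cite{Nguyen-Yin-Nguyen-21}) but must take the two-dimensional form (\ref{5.2-2})--(\ref{5.2-3}).
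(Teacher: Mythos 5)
Your proposal is correct and is exactly the paper's argument: the paper's proof likewise applies It\^o's formula to $p^0(t)X^{1,\e}(t)+p^1(t)\h{X}^{1,\e}(t)$ and concludes from the uniqueness of the solution to (\ref{4.1-1}); your write-up simply makes the cancellations and the quadratic-variation bookkeeping explicit, and these check out against (\ref{5.2-3}), (\ref{4.1}) and (\ref{4.1-1-1}).
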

          \begin{proof}
          Applying It\^{o}'s formula to $p^0(t)X^{1,\e}(t)+p^1(t)\h{X}^{1,\e}(t)$, we obtain (\ref{4.1-2}) from the uniqueness of  (\ref{4.1-1}).
          \end{proof}
\begin{remark}\label{4.3-1}
            In the above lemma we use  \textbf{Assumption 3.} As we know, for non-recursive utilities case  (see \cite{Nguyen-Yin-Nguyen-21}),  \textbf{Assumption 3} is not required. Because for non-recursive utilities case, the equality
            \begin{align} \nonumber
            \dbE[Y^{1,\e}(t)]=\dbE\[p^0(t)X^{1,\e}(t)+p^1(t)\h{X}^{1,\e}(t)\]
            \end{align}
            is sufficient for proving the maximum principle. However, for recursive utilities case since the generator $f$ depends on $(y,z)$, we could need the following slightly ``stronger" relation: for $t\in[0,T]$, $\mathbb{P}$-a.s.,
            \begin{align} \nonumber
            Y^{1,\e}(t)=p^0(t)X^{1,\e}(t)+p^1(t)\h{X}^{1,\e}(t).
            \end{align}
             In some way, \textbf{Assumption 3} could be regarded  as the cost of establishing the  ``stronger" relation (\ref{4.1-2}).
\end{remark}

            Denote
            $$\begin{aligned}
                  \delta^2X(t)&:=X^{\e}(t)-\bar{X}(t)-X^{1,\e}(t),\\
                  \delta^2Y(t)&:=Y^{\e}(t)-\bar{Y}(t)-Y^{1,\e}(t),\\
                  \delta^2Z(t)&:=Z^{\e}(t)-\bar{Z}(t)-Z^{1,\e}(t).
            \end{aligned}$$
            Then we have the following estimates.
\begin{proposition}\label{pro 4.5}
           Under \textbf{Assumptions 1}-\textbf{3}, for $2\leq \b\leq 4$, there exist a constant $C_\b>0$ depending on $\b$, and a function $\rho:(0,+\infty)\rightarrow (0,+\infty)$   with  $\rho(\e)\rightarrow0$ as $\e\downarrow0$  such that
           \begin{equation}\label{4.4}
              \begin{aligned}
                  &\mathrm{i)}\quad \dbE\bigg[\sup_{t\in[0,T]}|Y^{1,\e}(t)|^\b+\(\int_0^T|Z^{1,\e}(t)|^2dt\)^{\frac{\b}{2}}\bigg]
                                     \leq C_\b\e^\frac{\b}{2},\\
                 &\mathrm{ii)}\quad \dbE\bigg[\sup_{t\in[0,T]}|\d^2Y(t)|^{4}+\(\int_0^T|\d^2Z(t)|^2dt\)^2\bigg]\leq \e^2 \rho(\e).
              \end{aligned}
           \end{equation}
\end{proposition}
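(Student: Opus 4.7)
The plan is to treat (i) and (ii) separately, using the representation from Lemma~\ref{le 4.3} to prove (i) directly, and to derive a linear BSDE for $\delta^2 Y$ to prove (ii), on which the standard BSDE a priori estimate (Lemma~\ref{le 6.2}) can be applied.

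For (i), I substitute $Y^{1,\e}(t) = p^0(t) X^{1,\e}(t) + p^1(t) \widehat{X}^{1,\e}(t)$ from \eqref{4.1-2} directly into $\dbE[\sup_t|Y^{1,\e}|^\b]$ and apply the Cauchy-Schwarz inequality. Since the adjoint $p$ has finite $2\b$-th moments up to $2\b\leq 8$, and Proposition~\ref{pro 4.2} gives $\dbE[\sup_t|X^{1,\e}(t)|^{2\b}]\leq C\e^{\b}$ and $\dbE[\sup_t|\widehat{X}^{1,\e}(t)|^{2\b}]\leq C\e^{2\b}$, the product yields $C\e^{\b/2}$. For $Z^{1,\e}$, using the second line of \eqref{4.1-2}, the non-spike terms are bounded by the same argument after integrating in $t$; the spike term satisfies $\int_0^T|p^0\delta\sigma|^2\mathbf{1}_{E_\e}dt\leq C\e\sup_t(1+|\bar X|+|\widehat{\bar X}|+|v|+|\bar v|)^2$, so its $\b/2$-th power is $O(\e^{\b/2})$ in expectation.

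For (ii), I subtract the defining BSDE \eqref{4.1-1} for $Y^{1,\e}$ from the linearized BSDE for $\delta^1Y$ (obtained by writing $f(X^\e,\widehat{X}^\e,Y^\e,Z^\e,v^\e)-f(\bar X,\widehat{\bar X},\bar Y,\bar Z,\bar v)$ via the fundamental theorem of calculus plus a spike correction $\delta f\,\mathbf{1}_{E_\e}$). Writing $\delta^1X = X^{1,\e}+\delta^2X$, $\delta^1Z = Z^{1,\e}+\delta^2Z$, etc., I obtain a linear BSDE
\[
-d\delta^2Y(t) = \bigl\{ f^{\rho\e}_y(t)\delta^2Y(t) + f^{\rho\e}_z(t)\delta^2Z(t) + R^\e(t)\bigr\}dt - \delta^2Z(t)\,dW(t)
\]
with terminal $\delta^2Y(T)=\Phi^\rho_x(T)\delta^2X(T)+\Phi^\rho_{x'}(T)\delta^2\widehat{X}(T) + [\Phi^\rho_x(T)-\Phi_x(T)]X^{1,\e}(T)+[\Phi^\rho_{x'}(T)-\Phi_{x'}(T)]\widehat{X}^{1,\e}(T)$, where $R^\e$ collects $f^{\rho\e}_x\delta^2X+f^{\rho\e}_{x'}\delta^2\widehat{X}$, the four differences $[f^{\rho\e}_\bullet - f_\bullet]$ multiplied by $X^{1,\e}, \widehat{X}^{1,\e}, Y^{1,\e}, Z^{1,\e}$, and the spike bundle $\mathbf{1}_{E_\e}[\delta f + f_zp^0\delta\sigma + q^0\delta\sigma + p^0\delta b + p^1\widehat{\delta b}]$. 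Since $f_y,f_z$ are bounded by $L$, Lemma~\ref{le 6.2} with $\b=4$ reduces the claim to showing $\dbE|\delta^2Y(T)|^4 + \dbE[(\int_0^T|R^\e|ds)^4] \leq \e^2\rho(\e)$.

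I split the bounds by type. Terms carrying $\delta^2X$ or $\delta^2\widehat{X}$ are $O(\e^4)=o(\e^2)$ by Proposition~\ref{pro 4.2}(iii)--(iv) and Cauchy-Schwarz against the polynomial growth of $\Phi^\rho_x,f^{\rho\e}_x$. Pure spike terms with bounded integrands give $(\int\mathbf{1}_{E_\e}\cdots)^4=O(\e^4)$; the $q^0\delta\sigma\mathbf{1}_{E_\e}$ contribution is handled via $\int|q^0\delta\sigma|\mathbf{1}_{E_\e}dt\leq \e^{1/2}(\int|q^0|^2\mathbf{1}_{E_\e}dt)^{1/2}\sup_t|\delta\sigma|$ followed by dominated convergence on the $q^0$ factor. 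The delicate terms are $[f^{\rho\e}_\bullet - f_\bullet]\cdot(X^{1,\e}\text{ or }Y^{1,\e}\text{ or }Z^{1,\e})$, from which the factor $\rho(\e)$ must be extracted: on $E_\e^c$ one has $v^\e=\bar v$ and, by continuity of the derivatives together with $\delta^1X,\delta^1\widehat{X},\delta^1Y,\delta^1Z\to 0$ in probability (Lemmas~\ref{le 4.1}), $f^{\rho\e}_\bullet-f_\bullet\to 0$ in probability; using the polynomial/bounded dominators from Assumption~2(iii), dominated convergence forces $\int_0^T\dbE|f^{\rho\e}_\bullet-f_\bullet|^{p}\mathbf{1}_{E_\e^c}dt\to 0$, which paired via Cauchy-Schwarz with the $O(\sqrt\e)$-size (in the appropriate moment) of $X^{1,\e},Y^{1,\e},Z^{1,\e}$ from (i) and Proposition~\ref{pro 4.2} yields the desired $\e^2\rho(\e)$. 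On $E_\e$, $|E_\e|=\e$ contributes $O(\e^6)=o(\e^2)$.

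The main obstacle is the $[f^{\rho\e}_z - f_z]Z^{1,\e}$ contribution, because $Z^{1,\e}$ itself contains a spike piece $p^0\delta\sigma\mathbf{1}_{E_\e}$ of $L^2(dt)$-size $\sqrt\e$. Substituting the representation of $Z^{1,\e}$ from \eqref{4.1-2} splits this into a regular piece (handled as above) and a spike cross-piece whose integral in $t$ is bounded on $E_\e$; the 4th power of the latter is $O(\e^4)=o(\e^2)$, so both pieces produce $\e^2\rho(\e)$ overall, completing the bound.
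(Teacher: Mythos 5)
Your argument is correct in substance, but it is not uniformly the paper's route. For part (ii) you follow essentially the same path as the paper's appendix proof: write the linear BSDE satisfied by $\d^2Y$ with driver coefficients $f_y^{\rho\e},f_z^{\rho\e}$ plus a remainder, invoke \autoref{le 6.2} with exponent $4$, and estimate the terminal correction and the remainder term by term (the $\d^2X$-terms, the spike bundle, the $q^0\d\si\mathbf{1}_{E_\e}$ term via Cauchy--Schwarz and dominated convergence, and the $Z^{1,\e}$ spike piece via the representation \eqref{4.1-2}); the only real deviation is that you control the differences $f^{\rho\e}_\bullet-f_\bullet$ qualitatively (convergence in probability plus dominated convergence, giving $\e^2\rho(\e)$), whereas the paper exploits the boundedness of $D^2f$ to get the quantitative Lipschitz bound $|f^{\rho\e}_\bullet-f_\bullet|\leq L(|\d^1X|+|\d^1\h X|+|\d^1Y|+|\d^1Z|+|\d f_\bullet|\mathbf{1}_{E_\e})$ and hence a clean $O(\e^4)$; both suffice. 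For part (i) you take a genuinely different route: the paper applies the a priori estimate \eqref{6.2} directly to the variational BSDE \eqref{4.1-1} and estimates the data, while you insert the representation of $(Y^{1,\e},Z^{1,\e})$ from \autoref{le 4.3} and use the moment bounds on $(p,q)$ together with \autoref{pro 4.2}; this is legitimate (no circularity, since \autoref{le 4.3} is proved independently of \autoref{pro 4.5}) and arguably shorter, at the price of requiring \textbf{Assumption 3} and the eighth-moment bounds on the adjoint pair, which the paper's direct estimate of \eqref{4.1-1} does not need for this step. One technical slip to fix: when you bound the spike contribution by $\int_0^T|p^0\d\si|^2\mathbf{1}_{E_\e}dt\leq C\e\sup_t(1+|\bar X|+|\h{\bar X}|+|v|+|\bar v|)^2$ you end up needing $\dbE[\sup_t|v(t)|^\b]$, which is not assumed finite (admissible controls only satisfy $\sup_t\dbE[|v(t)|^8]<\infty$); instead apply H\"older in $t$ first, $(\int_{E_\e}g\,dt)^{\b/2}\leq\e^{\b/2-1}\int_{E_\e}g^{\b/2}dt$, and then take expectations so that only $\sup_t\dbE[\cdot]$ appears, exactly as the paper does for the analogous terms.
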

          \no  The proof  refers to Appendix.

\section{Second-order expansion}\label{Sec5}

           This section concerns the second-order expansion of FBSDE (\ref{3.1})-(\ref{3.3}). We consider the following second-order variational equations
           \begin{equation}\label{4.2}
              \left\{\begin{aligned}
                  &d X^{2,\e}(t)=\Big(b_x(t)X^{2,\e}(t)+b_{x'}(t)\h{X}^{2,\e}(t)+\frac{1}{2}b_{xx}(t)(X^{1,\e}(t))^2
                                 +\d b_x(t,v(t))X^{1,\e}(t)\mathbf{1}_{E_\e}(t)\Big)dt\\
                  &\qq\qq\q +\Big(\si_x(t)X^{2,\e}(t)+\si_{x'}(t)\h{X}^{2,\e}(t)+\frac{1}{2}\si_{xx}(t)(X^{1,\e}(t))^2\\
                  &\qq\qq\qq\qq\qq\qq\qq\qq\qq\qq\qq\qq+\d \si_x(t,v(t))X^{1,\e}(t)\mathbf{1}_{E_\e}(t)\Big)dW(t),\\
                  &X^{2,\e}(0)=0,
              \end{aligned}\right.
           \end{equation}
           and
            \begin{equation}\label{4.2-1-1}
               \left\{\begin{aligned}
                      d Y^{2,\e}(t)&= -\bigg\{f_x(t)X^{2,\e}(t)+f_{x'}(t)\h{X}^{2,\e}(t)+f_y(t)Y^{2,\e}(t)+f_z(t)Z^{2,\e}(t)\\
                                   &\q +\frac{1}{2}[1, p^0(t),p^0(t)\si_x(t)+q^0(t)]D_{xyz}^2f(t)
                                      [1, p^0(t),p^0(t)\si_x(t)+q^0(t)]^\intercal(X^{1,\e}(t))^2\\
                         %
                         %
                                   &\q +\mathbf{1}_{E_\e}(t)\(\d f(t,v(t),p^0\d\si(t))+q^0(t)\d\si(t,v(t))+p^0(t)\d b(t,v(t))
                                        +p^1(t) \h{\d b (t,v(t))}\)\bigg\}dt\\
                                   &\q +Z^{2,\varepsilon}(t)dW(s),\\
                        Y^{2,\e}(T)&=\F_x(T)X^{2,\e}(T)+\F_{x'}(T)\h{X}^{2,\e}(T)+\frac{1}{2}\F_{xx}(T)(X^{1,\e}(T))^2,
                \end{aligned}\right.
            \end{equation}
            where

            $$\begin{aligned}
               &\d f(t,v(t),p^0\d\si(t))=f\(t,\bar{X}(t),\dbE[\bar{X}(t)|\sF^\a_{t-}],\bar{Y}(t),\bar{Z}(t)+p^0(t)\d\si(t,v(t)),v(t),\a(t-)\)\\
               &\qq\qq\qq\qq\q -f\(t,\bar{X}(t),\dbE[\bar{X}(t)|\sF^\a_{t-}],\bar{Y}(t),\bar{Z}(t),\bar{v}(t),\a(t-)\).
            \end{aligned}$$
\begin{remark}
            Applying Lemma 5.4 \cite{Xiong-08} to (\ref{4.2}) and recall that $b_x(\cd),b_{xx}(\cd) $ are $\dbF^{\a}$-adapted, we obtain
            \begin{align} \nonumber
               \left\{\begin{aligned}
                   d \h{X}^{2,\e}(t) &=\bigg\{\( b_x(t)+ \h{b}_{x'}(t)\)\h{X}^{2,\e}(t)+\frac{1}{2}b_{xx}(t)\h{(X^{1,\e}(t))^2}
                                       +\h{\d b_x(t,v(t))X^{1,\e}(t)} \mathbf{1}_{E_\e}(t)\bigg\}dt,\\
                \widehat{X}^{2,\e}(0)&=0,
                \end{aligned}\right.
            \end{align}
\end{remark}
            where
           \begin{align} \nonumber
              \begin{aligned}
                  \h{(X^{1,\e}(t))^2}&=\dbE\[(X^{1,\e}(t))^2|\sF_{t-}^\a\], \q
                  \h{\d b_x(t,v(t))X^{1,\e}(t)}&=\dbE\[\d b_x(t,v(t))X^{1,\e}(t)|\sF_{t-}^\a\].
               \end{aligned}
           \end{align}

           Denote
           $$\begin{aligned}
                \d^3X(t)&:=X^{\e}(t)-\bar{X}(t)-X^{1,\e}(t)-X^{2,\e}(t),\\
                \d^3\h{X}(t)&:=\h{X}^{\e}(t)-\h{\bar{X}}(t)-\h{X}^{1,\e}(t)-\h{X}^{2,\e}(t),\\
                \d^3Y(t)&:=Y^{\e}(t)-\bar{Y}(t)-Y^{1,\e}(t)-Y^{2,\e}(t),\\
                \d^3Z(t)&:=Z^{\e}(t)-\bar{Z}(t)-Z^{1,\e}(t)-Z^{2,\e}(t).
           \end{aligned}$$
\begin{lemma}\label{le 5.1}
           Suppose \textbf{Assumption 1} and  \textbf{Assumption 2} hold, for $2\leq \beta\leq4$ there
           exist a constant $C_\b>0$ depending on $\b$ and a function $\rho:(0,+\infty)\rightarrow(0,+\infty)$ with $\rho(\e)\ra0$ as $\e\ra0$ such that
           \begin{align} \nonumber
              \begin{aligned}
              &\mathrm{\mathrm{i)}}\ \dbE\bigg[\sup_{t\in[0,T]}|\widehat{X}^{2,\e}(t)|^\b\bigg]\leq C_\b\e^\frac{3\b}{2},\q
               \mathrm{\mathrm{ii)}}\ \dbE\bigg[\sup_{t\in[0,T]}|X^{2,\e}(t)|^\b\bigg]\leq C_\b\e^\b,\\
               &\mathrm{\mathrm{iii)}}\ \dbE\bigg[\sup_{t\in[0,T]}|\d^3X(t)|^2+\sup_{t\in[0,T]}|\d^3\h{X}(t)|^2\bigg]\leq\e^2 \rho(\e),\\
               & \mathrm{\mathrm{iv)}}\ \dbE\bigg[\sup_{t\in[0,T]}|\d^3Y(t)|^2+\int_0^T|\d^3Z(t)|^2dt\bigg]\leq\e^2\rho(\e).
             \end{aligned}
           \end{align}
\end{lemma}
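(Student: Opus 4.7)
My plan is to prove the four estimates in order. Items (i)--(ii) follow from direct moment bounds on the linear (conditional) (S)DE structures of $\widehat{X}^{2,\e}$ and $X^{2,\e}$, while (iii)--(iv) require setting up equations for the Taylor residuals $\delta^{3}X,\delta^{3}\widehat{X},\delta^{3}Y,\delta^{3}Z$ and extracting a dominated-convergence modulus $\rho(\e)\downarrow 0$.

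For (i)--(ii): the ODE for $\widehat{X}^{2,\e}$ displayed in the remark above and the SDE (\ref{4.2}) for $X^{2,\e}$ are linear in the unknown with bounded linear coefficients (using Assumptions 1 and 3). Gronwall plus, for (ii), Burkholder--Davis--Gundy reduces each estimate to bounding the $L^{\beta}$-norm of two inhomogeneous sources: the quadratic source $\tfrac{1}{2}b_{xx}(X^{1,\e})^{2}$ (or its hat-version $\tfrac{1}{2}b_{xx}\widehat{(X^{1,\e})^{2}}$) and the spike-weighted source $\delta b_{x}(t,v(t))X^{1,\e}(t)\mathbf{1}_{E_{\e}}(t)$ (or its hat-version). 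The spike-weighted source yields the required $\e^{3\beta/2}$: H\"older on the integral over $E_{\e}$ extracts $\e^{\beta-1}$, and \autoref{pro 4.2}(ii) supplies the $\e^{\beta/2}$ factor. The quadratic source gives $\e^{\beta}$ for (ii) by \autoref{pro 4.2}(ii) with exponent $2\beta$; for (i) we additionally decompose $\widehat{(X^{1,\e})^{2}}=(\widehat{X}^{1,\e})^{2}+\widehat{(X^{1,\e}-\widehat{X}^{1,\e})^{2}}$, combine \autoref{pro 4.2}(i), the orthogonality $\widehat{X^{1,\e}-\widehat{X}^{1,\e}}=0$, and conditional Jensen.

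For (iii): I would write the SDE for $\delta^{3}X$ by subtracting (\ref{3.1}), (\ref{4.1}), and (\ref{4.2}). A second-order Taylor expansion of $b$ and $\sigma$ in $(x,x')$ around $(\bar{X},\widehat{\bar{X}})$ casts the drift as $b_{x}\delta^{3}X+b_{x'}\delta^{3}\widehat{X}+R^{b}$, with a similar $R^{\sigma}$ for the diffusion. The residuals $R^{b},R^{\sigma}$ gather the discrepancies between the exact second-order Taylor remainders at $(X^{\e},\widehat{X}^{\e})$ and their values along the linearisation by $X^{1,\e}$, plus spike terms such as $\delta b_{x}(t,v)(X^{\e}-\bar{X})\mathbf{1}_{E_{\e}}$. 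The uniform continuity of the second derivatives (Assumption 1(ii)) together with the $L^{\beta}$-bounds of \autoref{pro 4.2}(ii)--(iv) produces a modulus $\rho(\e)\downarrow 0$ with $\dbE[\int_{0}^{T}(|R^{b}|^{2}+|R^{\sigma}|^{2})ds]\leq \e^{2}\rho(\e)$. BDG together with Gronwall then yields the $L^{2}$-bound on $\delta^{3}X$, and the hat-bound follows by conditional Jensen applied to $|\delta^{3}X|^{2}$.

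Estimate (iv) is the heart of the matter and where the main obstacle lies. I would derive the BSDE for $\delta^{3}Y$ by subtracting (\ref{3.3}), (\ref{4.1-1}), (\ref{4.2-1-1}): the terminal value is the second-order Taylor remainder of $\Phi$ at $(\bar{X}(T),\widehat{\bar{X}}(T))$, which is $o_{L^{2}}(\e)$ by Assumption 2(iii), while the driver has the linear form $f_{y}\delta^{3}Y+f_{z}\delta^{3}Z+R^{f}$ with $R^{f}$ collecting the second-order Taylor residuals of $f$. The key subtlety is that by \autoref{le 4.3} $Z^{1,\e}$ carries the spike contribution $p^{0}(t)\delta\sigma(t,v(t))\mathbf{1}_{E_{\e}}(t)$, which is only $O(1)$ on $E_{\e}$, so a naive Taylor expansion of $f$ in $z$ around $\bar{Z}$ produces an $O(\e)$-error on $E_\e$ that fails to decay. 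This is precisely why (\ref{4.2-1-1}) encodes the spike through $\delta f(t,v(t),p^{0}\delta\sigma(t))$ rather than $\delta f(t,v(t))$: expanding $f$ about the shifted base point $\bar{Z}+p^{0}\delta\sigma\mathbf{1}_{E_{\e}}$ absorbs the leading $O(\e)$ spike contribution exactly into the driver of $Y^{2,\e}$, leaving only second-order cross-terms bounded by $|X^{1,\e}|^{2}\mathbf{1}_{E_{\e}}$ and cubic residuals in the spatial variables, both of which are $o_{L^{2}}(\e)$ by \autoref{pro 4.2}(ii) and Assumption 2(iii). Applying \autoref{le 6.2} to the resulting linear BSDE then closes the estimate.
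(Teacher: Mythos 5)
Your treatment of items (ii)--(iv) follows essentially the paper's own route: (ii)--(iii) are the direct moment estimates that the paper simply quotes from \cite{Nguyen-Yin-Nguyen-21}, and for the central item (iv) the paper's appendix does exactly what you outline --- it writes the linear BSDE for $\delta^3Y$ with driver $f_x\delta^3X+f_{x'}\delta^3\widehat{X}+f_y\delta^3Y+f_z\delta^3Z+M_1(t)$, uses the relation \eqref{4.1-2} to substitute $Z^{1,\e}$ so that the $O(1)$ spike $p^0\delta\sigma\mathbf{1}_{E_\e}$ is absorbed through the shifted difference $\delta f(t,v,p^0\delta\sigma)$ appearing in \eqref{4.2-1-1} (precisely the subtlety you identify), and closes with \autoref{le 6.2}. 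Two refinements you should make explicit: the factors $\rho(\e)$ for the terminal remainder and for the Hessian-difference term come from the \emph{continuity} of $\Phi_{xx}$ and $D^2f$ together with dominated convergence, not from the boundedness in Assumption 2(iii) alone (boundedness only gives $O(\e)$, not $o(\e)$); and the replacement of the full Hessian of $f$ in $(x,x',y,z)$ by $D^2_{xyz}f$ has to be justified by the observation that every second-order term involving the $x'$-direction carries a factor $\delta^1\widehat{X}=O(\e)$ (the term $I_1$ in the paper's appendix), which your blanket phrase ``Taylor residuals'' glosses over.

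The genuine gap is in item (i). The hat-equation for $\widehat{X}^{2,\e}$ carries the source $\tfrac12 b_{xx}(t)\widehat{(X^{1,\e}(t))^2}$, and your decomposition $\widehat{(X^{1,\e})^2}=(\widehat{X}^{1,\e})^2+\widehat{(X^{1,\e}-\widehat{X}^{1,\e})^2}$ does not make it small: the identity $\widehat{X^{1,\e}-\widehat{X}^{1,\e}}=0$ says nothing about the conditional second moment, and since $\mathbb{E}\big[\widehat{(X^{1,\e}(t))^2}\big]=\mathbb{E}\big[(X^{1,\e}(t))^2\big]$ is exactly of order $\e$ after the spike (driven by $\delta\sigma\mathbf{1}_{E_\e}dW$), this term is $O(\e)$ and not $o(\e)$ --- indeed the paper stresses this very fact in point (ii) of its Introduction, and it is the reason the second-order adjoint equation \eqref{5.4} needs the component $P^1$. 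Consequently Gronwall plus conditional Jensen yields only $\mathbb{E}[\sup_t|\widehat{X}^{2,\e}(t)|^\beta]\le C_\beta\e^{\beta}$ whenever $b_{xx}\not\equiv0$ (your spike source does give $\e^{3\beta/2}$, but the quadratic source caps the rate at $\e^\beta$), so your claimed mechanism for the exponent $3\beta/2$ fails, and no rearrangement of that decomposition will repair it. Note that the weaker bound $\e^\beta$ is harmless for what follows: the appendix proof of item (iv) uses only \autoref{le 4.1}, \autoref{pro 4.2}, \autoref{le 4.3} and \autoref{pro 4.5}, not the $3\beta/2$ rate of item (i).
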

           \no The item $\mathrm{i)}$ comes from standard estimate for ordinary differential equation and \autoref{pro 4.2};
           the items $\mathrm{ii)}-\mathrm{iii)}$ come from (3.8) and (3.10) \cite{Nguyen-Yin-Nguyen-21}. The proof of item $\mathrm{iv)}$  refers to Appendix.

\ms

           In order to give the relation between   $X^{2,\e}(\cd),\h{X}^{2,\e}(\cd)$ and $(Y^{2,\e}(\cd),Z^{2,\e}(\cd))$, we first
           introduce the following BSDE:
           \begin{equation}\label{5.4111}
              \left\{\begin{aligned}
                   d\tilde{Y}(t)&=-\bigg\{f_y(t)\ti{Y}(t)+f_z(t)\ti{Z}(t)+\mathbf{1}_{E_\e}(t)\bigg(\d f(t,v(t),p^0\d\si(t))+p^0(t)\d
                                    b(t,v(t))\\
                               & \q +p^1(t) \h{\d b(t,v(t))} +q^0(t)\d\si(t,v(t))\\
                               & \q+\frac{1}{2} \(P^0(t)(\d\si(t,v(t))^2 + P^1(t)\h{(\d\si(t,v(t))^2}\)\bigg)\bigg\}dt
                                  -\ti{Z}(t)dW(t),\q t\in[0,T],\\
                  \tilde{Y}(T)&=0.
              \end{aligned}\right.
           \end{equation}
           It is a linear BSDE  and exists a  unique solution $(\ti{Y}(t),\ti{Z}(t))_{t\in[0,T]}\in \cS^2_{\dbF}(0,T;\dbR)\ts
           \cH^{2,1}_{\dbF}(0,T;\dbR).$
\begin{lemma}\label{le 5.2}
           Under \textbf{Assumptions 1}-\textbf{3}, we have
           \begin{equation}\label{5.5}
              \left\{\begin{aligned}
                  Y^{2,\e}(t)&=p^0(t)X^{2,\e}(t)+p^1(t)\h{X}^{2,\e}(t)+\frac{1}{2}P^0(t)(X^{1,\e}(t))^2
                               +\frac{1}{2}P^1(t)\dbE[(X^{1,\e}(t))^2|\sF^\a_{t-}]+\tilde{Y}(t),\\
                 Z^{2,\e}(t)&=[p^0(t)\sigma_x(t)+q^0(t)]X^{2,\varepsilon}(t)+[p^0(t)\si_{x'}(t)+q^1(t)])\h{X}^{2,\e}(t)\\
                    &\q+\frac{1}{2}(X^{1,\e}(t))^2[p^0(t)\si_{xx}(t)+2P^0(t)\si_x(t)+Q^0(t)]+\frac{1}{2}\dbE[(X^{1,\e}(t))^2|\sF^\a_{t-}]Q^1(t)\\
                            &\q+X^{1,\e}(t)\mathbf{1}_{E_\e}(t)\bigg\{P^0(t)\d\si(t,v(t))+p^0(t)\d\si_x(t,v(t)) \bigg\}+\ti{Z}(t).
               \end{aligned}\right.
           \end{equation}
\end{lemma}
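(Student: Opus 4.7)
The plan mirrors that of \autoref{le 4.3}: introduce the candidate
\[
\bar Y(t):= p^0(t)X^{2,\e}(t)+p^1(t)\h X^{2,\e}(t)+\tfrac{1}{2}P^0(t)(X^{1,\e}(t))^2+\tfrac{1}{2}P^1(t)\dbE[(X^{1,\e}(t))^2|\sF^\a_{t-}]+\ti Y(t),
\]
apply It\^o's product formula to each of its five summands, and check that $(\bar Y,\bar Z)$, where $\bar Z$ denotes the resulting $dW$-coefficient, satisfies \emph{exactly} the linear BSDE \eqref{4.2-1-1} that defines $(Y^{2,\e},Z^{2,\e})$. Uniqueness for that BSDE then yields both identities in \eqref{5.5} simultaneously --- the first from $\bar Y=Y^{2,\e}$, the second from matching $dW$-coefficients.

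I would compute the differentials one by one: $dp^0, dp^1$ from \eqref{5.2-2}, $dX^{2,\e}, dP^0, dP^1$ from \eqref{4.2} and \eqref{5.4} respectively, $d\h X^{2,\e}$ (drift only, via Lemma 5.4 of \cite{Xiong-08}), $d(X^{1,\e})^2$ from It\^o applied to \eqref{4.1}, and $d\dbE[(X^{1,\e})^2|\sF^\a_{t-}]$ again via Lemma 5.4 of \cite{Xiong-08}. The cross-variation between $P^0$ and $(X^{1,\e})^2$ supplies, through the definition of $G^f$, the full quadratic form $\tfrac{1}{2}[1,p^0,p^0\si_x+q^0]D^2_{xyz}f\,[1,p^0,p^0\si_x+q^0]^\intercal (X^{1,\e})^2$. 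The dynamics of $\ti Y$ in \eqref{5.4111} supplies precisely those $\mathbf{1}_{E_\e}$ contributions that cannot come from the variational equations --- in particular the $\tfrac12 P^0(\d\si)^2$ and $\tfrac12 P^1\h{(\d\si)^2}$ pieces that arise from squaring the ``jump" in the diffusion of $dX^{1,\e}$ when one expands $d(X^{1,\e})^2$ via It\^o.

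The main obstacle is algebraic bookkeeping. After summing all contributions I would group the drift by structure: (a) coefficients of $X^{2,\e}$ and $\h X^{2,\e}$; (b) coefficients of $(X^{1,\e})^2$; (c) coefficients of $\dbE[(X^{1,\e})^2|\sF^\a_{t-}]$; (d) $\mathbf{1}_{E_\e}$ terms. Using the precise blocks of $F^p,F^q,G^P,G^Q,G^p,G^q$ from \eqref{5.2-3}--\eqref{5.4}, each group must collapse to the corresponding coefficient in the drift of \eqref{4.2-1-1} once $f_y(t)\bar Y(t)+f_z(t)\bar Z(t)$ is expanded according to the candidate forms of $\bar Y$ and $\bar Z$. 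For instance, the $X^{2,\e}$-drifts from $d(p^0X^{2,\e})$ and $d(p^1\h X^{2,\e})$ combine with the $p^0 X^{2,\e}$ piece of $f_y\bar Y$ and the $(p^0\si_x+q^0)X^{2,\e}$ piece of $f_z\bar Z$ to reproduce exactly $-f_x(t)X^{2,\e}(t)$; the diagonal structure of the first columns of $F^p, F^q$ is designed precisely for this cancellation. Analogous cancellations for (c) use the lower-right blocks of $G^P, G^Q$, while the quadratic term in (b) assembles into $G^f$ together with the contributions from $G^p p^0$ and $G^q q^0$.

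To conclude I would check the terminal condition: since $p(T)=F^\Phi(T)$, $P^0(T)=\Phi_{xx}(T)$, $P^1(T)=0$, and $\ti Y(T)=0$, one has
\[
\bar Y(T)=\F_x(T)X^{2,\e}(T)+\F_{x'}(T)\h X^{2,\e}(T)+\tfrac12\F_{xx}(T)(X^{1,\e}(T))^2=Y^{2,\e}(T).
\]
Combined with matching drift and diffusion, uniqueness of the linear BSDE \eqref{4.2-1-1} (guaranteed by \textbf{Assumption 2}(i) and \autoref{le 6.2}) forces $\bar Y\equiv Y^{2,\e}$ and identifies the $dW$-coefficient as $Z^{2,\e}$, which is exactly \eqref{5.5}.
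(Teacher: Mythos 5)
Your proposal is correct and follows essentially the same route as the paper: apply It\^o's formula to the candidate process $p^0X^{2,\e}+p^1\h X^{2,\e}+\tfrac12 P^0(X^{1,\e})^2+\tfrac12 P^1\dbE[(X^{1,\e})^2|\sF^\a_{t-}]+\ti Y$ and invoke uniqueness of the linear BSDE \eqref{4.2-1-1}, which is exactly the paper's (unelaborated) argument. One small slip in your narrative: the quadratic form $\tfrac12[1,p^0,p^0\si_x+q^0]D^2_{xyz}f[\,\cdot\,]^\intercal(X^{1,\e})^2$ enters through the $G^f$ term in the drift of $dP^0$, not through the cross-variation of $P^0$ with $(X^{1,\e})^2$ (that cross-variation instead produces the $Q^0\si_x$-type contributions), but your later grouping step (b) states the correct bookkeeping.
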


           \begin{proof}
            Applying It\^{o}'s formula to
            $$\begin{aligned}
                          &p^0(t)X^{2,\e}(t)+p^1(t)\h{X}^{2,\e}(t)+\frac{1}{2}P^0(t)(X^{1,\e}(t))^2
                               +\frac{1}{2}P^1(t)\dbE[(X^{1,\e}(t))^2|\sF^\a_{t-}] +\tilde{Y}(t),
            \end{aligned}$$
            and making use of the uniqueness of the equation (\ref{4.2-1-1}), we have (\ref{5.5}). The calculation is tedious, but direct.
            \end{proof}
\begin{remark}
            From (\ref{4.1-2}) and (\ref{5.5}), we have, for $t\in[0,T]$, $\mathbb{P}$-a.s.,
            \begin{equation}\label{9.6-3}
               \begin{aligned}
                  Y^\e(t)-\bar{Y}(t)&=p^0(t)\(X^{1,\e}(t)+X^{2,\e}(t)\)+p^1(t)\(\h{X}^{1,\e}(t)+\h{X}^{2,\e}(t)\)\\
                                    &\q +\frac{1}{2}P^0(t)(X^{1,\e}(t))^2+\frac{1}{2}P^1(t)\dbE[(X^{1,\e}(t))^2|\sF^\a_{t-}]+\ti{Y}(t).
               \end{aligned}
            \end{equation}
\end{remark}

            Now we are in a position to give the proof of \autoref{th 3.4}.

            \no \emph{Proof of \autoref{th 3.4}}\ From (\ref{9.6-3}), we have
            \begin{equation}\label{5.7}
            0\leq J(v^\varepsilon(\cdot))-J(\bar{v}(\cdot))=Y^\varepsilon(0)-\bar{Y}(0)=\tilde{Y}(0)+o(\varepsilon).
            \end{equation}
Consider the following SDE:
           \begin{align} \nonumber
           \begin{aligned}
              d\Gamma(t)&=f_y(t)\Gamma(t)dt+f_z(t)\Gamma(t)dW(t),\q
               \Gamma(0)=1.
             \end{aligned}
           \end{align}
           Applying It\^{o}'s formula to $\G(t)\ti{Y}(t)$, integrating from $0$ to $T$ and then taking expectation, it follows
           \begin{align} \nonumber
               \begin{aligned}
                   \ti{Y}(0)&=\dbE\bigg[\int_0^T\G(t) \mathbf{1}_{E_\e}(t)\bigg(\d f(t,v(t),p^0\d\si(t))+p^0(t)\d b(t,v(t))
                              +p^1(t)\dbE[\d b(t,v(t))|\sF^\a_{t-}]\\
                           &\q+q^0(t)\d\si(t,v(t))+\frac{1}{2} \(P^0(t)(\si(t,v(t)))^2 + P^1(t)\dbE[(\si(t,v(t)))^2|\sF^\a_{t-}]\)
                             \bigg)dt\bigg].
              \end{aligned}
           \end{align}
           From the Lebesgue differentiation theorem, we obtain
           $$\begin{aligned}
              &\bigg\{H(t,\bar{X}(t),\dbE[\bar{X}(t)|\sF^{\a}_{t-}],\bar{Y}(t),\bar{Z}(t),v,\a(t-),p^0(t),p^1(t),q^0(t))\\
              &\q+\frac{1}{2}P^0(t)\(\si(t,\bar{X}(t),\dbE[\bar{X}(t)|\sF^{\a}_{t-}],v,\a(t-))-
                  \si(t,\bar{X}(t),\dbE[\bar{X}(t)|\sF^{\a}_{t-}],\bar{v}(t),\a(t-))\)^2\\
              &\q+\frac{1}{2}P^1(t) \dbE\[\(\si(t,\bar{X}(t),\dbE[\bar{X}(t)|\sF^{\a}_{t-}],v,\a(t-))-
                  \si(t,\bar{X}(t),\dbE[\bar{X}(t)|\sF^{\a}_{t-}],\bar{v}(t),\a(t-))\)^2\big|\sF^{\a}_{t-}\]\bigg\}\G(t)\\
              &\geq H(t,\bar{X}(t),\dbE[\bar{X}(t)|\sF^{\a}_{t-}],\bar{Y}(t),\bar{Z}(t),\bar{v}(t),\a(t-),p^0(t),p^1(t),q^0(t))
              \Gamma(t),\   v\in V,\ \text{a.s.,}\  \text{a.e.}
           \end{aligned}$$
Since $\Gamma(t)>0,\ t\in[0,T]$, we get the wished result. $\square$

\section{Linear-quadratic case}

In this section we focus on the linear-quadratic case.
Recall that $\widehat{\xi}:=\mathbb{E}[\xi|\sF^{\a}_{t-}]$  denotes
its optimal filtering estimate.
Consider the following linear forward-backward control system
\begin{equation}\label{10.1}
\left\{
\begin{aligned}
d X^v(t)&=\Big(A_1(t,\alpha(t-))X^v(t)+A_2(t,\alpha(t-))\widehat{X}^v(t)+A_3(t,\alpha(t-))v(t)\Big)dt\\
         &\quad+\Big(B_1(t,\alpha(t-))X^v(t)+B_2(t,\alpha(t-))\widehat{X}^v(t)+B_3(t,\alpha(t-))v(t)\Big)dW(t),\ t\in[0,T],\\
         X^{v}(0)&=x,
\end{aligned}
\right.
\end{equation}
\begin{equation}\label{10.2}
\left\{
\begin{aligned}
d Y^v(t)&=-\bigg(C_1(t,\alpha(t-))X^v(t)+C_2(t,\alpha(t-))\widehat{X}^v(t)+ C_3(t,\alpha(t-))Y^v(t)+C_4(t,\alpha(t-))Z^v(t)\\
         &\quad  +C_5(t,\alpha(t-))v(t)\bigg)dt+Z^v(t)dW(t),\ t\in[0,T],\\
         Y^{v}(T)&=D_1(\alpha(T))(X^v(T))^2+D_2(\alpha(T))(\widehat{X}^v(T))^2,
\end{aligned}
\right.
\end{equation}
where $A_1(\cdot),A_2(\cdot),\cdot\cdot\cdot,D_2(\cdot)$ are $\mathbb{F}^\alpha$-adapted bounded processes.
For convenience, we write $A_1(t,\alpha(t-)),$  $A_2(t,\alpha(t-))\cdot\cdot\cdot D_2(\alpha(T)) $ as $A_1,A_2 \cdot\cdot\cdot D_2$.
It is easy to check that the  coefficients in (\ref{10.1}) and (\ref{10.2}) satisfy \textbf{Assumptions 1}-\textbf{3}.

In this setting, the first-order adjoint equation becomes
\begin{equation}\label{10.3}
\left\{
\begin{aligned}
d
\begin{pmatrix}
p^{0}(t)\\
p^{1}(t)
\end{pmatrix}
&=-\bigg[\begin{pmatrix}
A_1+C_3+B_1C_4& 0\\
A_2+B_2C_4    &  A_1+A_2+C_3
\end{pmatrix}
\begin{pmatrix}
p^{0}(t)\\
p^{1}(t)
\end{pmatrix}
+\begin{pmatrix}
B_1+C_4& 0\\
B_2    & C_4
\end{pmatrix}\begin{pmatrix}
q^{0}(t)\\
q^{1}(t)
\end{pmatrix}\\
&\quad+
\begin{pmatrix}
C_1\\
C_2
\end{pmatrix}\bigg]dt
+\begin{pmatrix}
q^{0}(t)\\
q^{1}(t)
\end{pmatrix}
dW(t),\\
\begin{pmatrix}
p^{0}(T)\\
p^{1}(T)
\end{pmatrix}&=\begin{pmatrix}
2D_1X(T)\\
2D_2\widehat{X}(T)
\end{pmatrix},
\end{aligned}
\right.
\end{equation}
and the second-order adjoint equation is
\begin{equation}\label{10.4}
\left\{
\begin{aligned}
d P^0(t)&=-\bigg\{\Big(C_3+2B_1C_4+2A_1+(B_1)^2\Big)P^0(t)+\Big(2B_1+C_4\Big)Q^0(t)\bigg\}dt\\
        &\quad +Q^0(t)dW(t),\ t\in[0,T],\\
P^0(T)&= 2D_1,
\end{aligned}
\right.
\end{equation}
with $P^1(t)=Q^1(t)\equiv0,\ t\in[0,T]$.

From \autoref{th 3.4}, we have
\begin{theorem}
Let $\bar{v}$ be the optimal control, and let $((p^0(\cdot),p^1(\cdot)), (q^0(\cdot),q^1(\cdot)))$ and
$(P^0(\cdot),Q^0(\cdot))$ be the solutions to the first-order adjoint equation (\ref{10.3}) and the
second-order adjoint equation (\ref{10.4}), respectively. Then the following maximum principle holds true
\begin{equation}\label{10.5}
\begin{aligned}
&\Big[ p^0(t)(A_3+C_4B_3)+B_3q_0(t)+C_5\Big](v-\bar{v}(t))+p^1(t)A_3\mathbb{E}[(v-\bar{v}(t))|\sF^\alpha_{t-}]\\
&+\frac{1}{2}P^0(t)(B_3)^2(v-\bar{v}(t))^2\geq0,\q v\in V,\ \text{a.s.,}\  \text{a.e.}
\end{aligned}
\end{equation}
\end{theorem}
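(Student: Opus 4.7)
The plan is to specialize \autoref{th 3.4} to the linear-quadratic data in \eqref{10.1}--\eqref{10.2}. Since $b,\sigma,f$ are affine in $(x,x',y,z,v)$ and $\Phi$ is quadratic in $(x,x')$, the partial derivatives reduce to
$b_x=A_1,\ b_{x'}=A_2,\ b_{xx}=0,\ \sigma_x=B_1,\ \sigma_{x'}=B_2,\ \sigma_{xx}=0,\ f_x=C_1,\ f_{x'}=C_2,\ f_y=C_3,\ f_z=C_4,\ D^2_{xyz}f\equiv 0,\ \Phi_x(T)=2D_1\bar X(T),\ \Phi_{x'}(T)=2D_2\widehat{\bar X}(T),\ \Phi_{xx}(T)=2D_1,$ and all are $\mathbb{F}^\alpha$-adapted (so $\widehat{b}_{x'}=A_2$ in \eqref{5.2-3}). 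Substituting these into the coefficient matrices of \eqref{5.2-2}--\eqref{5.2-3} immediately produces the system \eqref{10.3}; this verifies the first-order adjoint equation.

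Next I would substitute the same derivatives into the matrices $G^P,G^Q,G^p,G^q,G^f$ of \eqref{5.4}. Because $b_{xx}=\sigma_{xx}=0$ and $D^2_{xyz}f=0$, all of $G^p,G^q,G^f$ vanish, leaving a fully decoupled $2$-dimensional linear BSDE. The $P^0$-equation is exactly \eqref{10.4}; the $P^1$-equation reads
\begin{equation*}
dP^1(t)=-\bigl[(C_3+2A_1+(B_1)^2)P^1(t)+C_4\,Q^1(t)\bigr]dt+Q^1(t)\,dW(t),\qquad P^1(T)=0,
\end{equation*}
whose unique solution in the class $\cS^2_{\dbF}\times\cH^{2,1}_{\dbF}$ is the trivial one, so $P^1\equiv Q^1\equiv 0$. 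This justifies working only with $(P^0,Q^0)$ in \eqref{10.4}.

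To obtain \eqref{10.5}, I would compute the spike variations $\delta b(t,v)=A_3(v-\bar v(t))$, $\delta\sigma(t,v)=B_3(v-\bar v(t))$, and
\begin{equation*}
f(t,\bar X,\widehat{\bar X},\bar Y,\bar Z+p^0\delta\sigma(t,v),v,\alpha)-f(t,\bar X,\widehat{\bar X},\bar Y,\bar Z,\bar v,\alpha)=C_4 p^0 B_3(v-\bar v(t))+C_5(v-\bar v(t)).
\end{equation*}
Inserting these into the Hamiltonian $H$ of \autoref{th 3.4} and using the $\mathbb{F}^\alpha$-measurability of $A_3$ to pull it out of the conditional expectation, the first-order part becomes
\begin{equation*}
H(v)-H(\bar v(t))=\bigl[p^0(t)(A_3+C_4B_3)+q^0(t)B_3+C_5\bigr](v-\bar v(t))+p^1(t)A_3\,\dbE[v-\bar v(t)\mid\sF^\alpha_{t-}].
\end{equation*}
The second-order contribution is $\frac12 P^0(t)(B_3)^2(v-\bar v(t))^2+\frac12 P^1(t)\dbE[(B_3(v-\bar v(t)))^2\mid\sF^\alpha_{t-}]$, and the second term drops out since $P^1\equiv 0$. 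Combining these pieces and invoking the maximum principle of \autoref{th 3.4} yields \eqref{10.5}.

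The proof is essentially a bookkeeping exercise; the only point that requires care is identifying the vanishing of $P^1,Q^1$ from the structure of \eqref{5.4} (it is a consequence of the degeneracy $D^2_{xyz}f=0$ together with $\Phi_{xx'}(T)=0$ and $b_{xx}=\sigma_{xx}=0$), since otherwise one would expect an extra $\frac12 P^1(t)\dbE[(B_3)^2(v-\bar v(t))^2\mid\sF^\alpha_{t-}]$ term in \eqref{10.5}. No genuine obstacle is anticipated.
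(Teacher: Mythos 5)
Your proposal is correct and follows exactly the route the paper takes: it specializes \autoref{th 3.4} to the linear-quadratic data, checks that the adjoint matrices reduce to (\ref{10.3})--(\ref{10.4}) with $P^1\equiv Q^1\equiv 0$ (which the paper asserts without detail), and reads off (\ref{10.5}) from the Hamiltonian difference using the $\mathbb{F}^\alpha$-adaptedness of $A_3$. The bookkeeping, including $\delta\sigma(t,v)=B_3(v-\bar v(t))$ and the $C_4p^0B_3$ contribution from the shifted $z$-argument of $f$, matches the paper's statement term by term.
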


\section{Application to problems with state constraint}

As an illustrative application, in this section we investigate the
corresponding problem with state constraint. More precisely, we consider
the forward-backward control system (\ref{3.1})-(\ref{3.3}), the cost functional (\ref{3.4})
as well as the state constrain:
\begin{equation}\label{9.1}
\mathbb{E}[\Psi(X(T), \widehat{X}(T), Y(0))]=0,
\end{equation}
where $\Psi:\mathbb{R}\times \mathbb{R}\times \mathbb{R}\rightarrow \mathbb{R}$.
\begin{description}
             \item[ ]\textbf{Assumption 4.}
             The function $\Psi$ is twice continuously differentiable with respect to $(x,x',y)$, and the Hessian matrix of $\Psi$  with respect to $(x,x',y)$, denoted by $D^2\Psi$, is bounded.
 \end{description}

For any $v(\cdot)\in \mathcal{V}_{0,T}$, we denote $X^v(\cdot), (Y^v(\cdot),Z^v(\cdot))$ the solutions to the equation (\ref{3.1})
and the equation (\ref{3.3}), respectively. The set of admissible controls is defined by
\begin{align} \nonumber
\mathbb{V}_{0,T}=\Big\{v(\cdot)\in \mathcal{V}_{0,T}\Big| \mathbb{E}[\Psi(X^v(T), \widehat{X}^v(T), Y^v(0))]=0 \Big\}.
\end{align}

Let $\bar{v}(\cdot)\in \mathbb{V}_{0,T}$ be the optimal control and $\bar{X}(\cdot), (\bar{Y}(\cdot),\bar{Z}(\cdot))$
the optimal trajectories. For arbitrary constant $\kappa>0$, we consider the following cost functional on $\mathcal{V}_{0,T}$:
\begin{align} \nonumber
J_\kappa(v(\cdot))=\bigg\{\Big(Y^v(0)-\bar{Y}(0)+\kappa  \Big)^2+\Big|\mathbb{E}[\Psi(X^v(T), \widehat{X}^v(T), Y^v(0))]\Big|^2\bigg\}^\frac{1}{2}.
\end{align}
Clearly, $$J_\kappa(v(\cdot))>0\q \text{and}\q J_\kappa(\bar{v}(\cdot))=\kappa \leq \inf\limits_{v\in\mathcal{V}_{0,T}}J_\kappa(v(\cdot))+\kappa.$$

Next, we use Ekeland's variational principle to study stochastic maximum principle. For this, let us
introduce the metric on $\mathcal{V}_{0,T}$:
$$d(u(\cdot), v(\cdot))=\mathbb{E}\Big[\int_0^T\mathbf{1}_{\{u\neq v\}}(t,\omega) dt  \Big].$$
Let $(\mathcal{V}_{0,T},d)$ be a complete space, otherwise, one can adopt the argument in Tang and Li \cite{Tang-Li-94}, Wu \cite{Wu-13} to derive the same result.
According to Ekeland's variational principle, there exists a $v_\kappa(\cdot)\in \mathcal{V}_{0,T}$ such that
\begin{equation}\label{9.4}
\begin{aligned}
&\mathrm{i)}\ J_\kappa(v_\kappa(\cdot))\leq J_\kappa(\bar{v}(\cdot))=\kappa,\\
&\mathrm{ii)}\ d(v_\kappa(\cdot), \bar{v}(\cdot) )\leq\sqrt{\kappa},\\
&\mathrm{iii)}\ J_\kappa(v(\cdot))-J_\kappa(v_\kappa(\cdot))+\sqrt{\kappa}d(v(\cdot), v_\kappa(\cdot))\geq0,\q \forall v(\cdot)\in \mathcal{V}_{0,T}.
\end{aligned}
\end{equation}
For arbitrary $\varepsilon>0$ and $v\in \mathcal{V}_{0,T}$, define
$$
v_{\kappa,\varepsilon}(t)=v_{\kappa}(t)\mathbf{1}_{E^c_\varepsilon}(t)+v(t)\mathbf{1}_{E_\varepsilon}(t),
$$
where $E_\varepsilon\subset[0,T]$ is a Borel subset with its Borel measure $|E_\varepsilon|=\varepsilon$.
Clearly, $d(v_{\kappa,\varepsilon}(t), v_{\kappa}(t))\leq \varepsilon.$
By $(X^\kappa(\cdot), Y^\kappa(\cdot),Z^\kappa(\cdot))$ and $(X^\varepsilon(\cdot), Y^\varepsilon(\cdot),Z^\varepsilon(\cdot))$
we denote the solutions to the forward-backward control system (\ref{3.1})-(\ref{3.3}) with $v_\kappa(\cdot)$ and $v_{\kappa,\varepsilon}(\cdot)$, respectively. From the item $\mathrm{iii)}$ of (\ref{9.4}) and Taylor expansion, we have
\begin{equation}\label{6.5-111}
\begin{aligned}
0&\leq  J_\kappa(v_{\kappa,\varepsilon}(\cdot))-J_\kappa(v_\kappa(\cdot))+\sqrt{\kappa}\varepsilon\\
 &\leq \lambda_\kappa(Y^\varepsilon(0)-Y^\kappa(0))
 +\mu_\kappa\bigg\{\mathbb{E}\Big[ \Psi(X^\varepsilon(T), \widehat{X}^\varepsilon(T), Y^\varepsilon(0))\Big]-
 \mathbb{E}\Big[ \Psi(X^\kappa(T), \widehat{X}^\kappa(T), Y^\kappa(0))\Big] \bigg\}\\
 &\quad+\sqrt{\kappa}\varepsilon+o(\varepsilon),
\end{aligned}
\end{equation}
where
\begin{equation}\label{9.6-2}
\lambda_\kappa=\frac{1}{J_\kappa(v_\kappa(\cdot))}[Y^\kappa(0)-\bar{Y}(0)+\kappa],\qq
\mu_\kappa= \frac{1}{J_\kappa(v_\kappa(\cdot))}\mathbb{E}\Big[ \Psi(X^\kappa(T), \widehat{X}^\kappa(T), Y^\kappa(0))\Big].
\end{equation}

By
$$
\begin{aligned}
(p^\kappa, q^\kappa)=\Big((p^{0,\kappa},p^{1,\kappa}), (q^{0,\kappa},q^{1,\kappa})\Big),\
(P^\kappa,Q^\kappa)=\Big((P^{0,\kappa},P^{1,\kappa}), (Q^{0,\kappa},Q^{1,\kappa})\Big)
\end{aligned}
$$
we denote the solutions to the first-order adjoint equation (\ref{5.2-2}) and the second-order adjoint
equation (\ref{5.4}) but with $(X^\kappa(\cdot), Y^\kappa(\cdot),Z^\kappa(\cdot),v^\kappa(\cdot))$ instead of
$(\bar{X}(\cdot), \bar{Y}(\cdot),\bar{Z}(\cdot),\bar{v}(\cdot))$.

On the one hand, by making the similar analyse as (\ref{5.7}), we can obtain
\begin{equation}\label{9.6}
Y^\varepsilon(0)-Y^\kappa(0)=\tilde{Y}^\kappa(0)+o(\varepsilon),
\end{equation}
where
\begin{align} \nonumber
\begin{aligned}
\tilde{Y}^\kappa(t)&=\int_t^T\bigg\{f^\kappa_y(t)\tilde{Y}^\kappa(t)+f^\kappa_z(t)\tilde{Z}^\kappa(t)
+\mathbf{1}_{E_\varepsilon}(t)\bigg(\delta f^\kappa(t,v(t),p^{0,\kappa}\delta\sigma^\kappa(t))+p^{0,\kappa}(t)\delta b^\kappa(t,v(t))\\
                &\quad+p^{1,\kappa}(t)\mathbb{E}[\delta b^\kappa(t,v(t))|\mathcal{F}^\alpha_{t-}]+q^{0,\kappa}(t)\delta\sigma^\kappa(t,v(t))\\
               &\q+\frac{1}{2} \Big(P^{0,\kappa}(t)(\delta\sigma^\kappa(t,v(t)))^2
                    +P^{1,\kappa}(t)\mathbb{E}[(\delta\sigma^\kappa(t,v(t)))^2|\sF^\alpha_{t-}]\Big)\bigg)\bigg\}dt
               -\int_t^T\tilde{Z}^\kappa(t)dW(t)
\end{aligned}
\end{align}
and $f_y^\kappa(t)=\partial_yf(t,X^\kappa(t),\widehat{X}^\kappa(t),Y^\kappa(t),Z^\kappa(t),v^k(t),\alpha(t-))$.
$f_z^\kappa(t), \delta f^\kappa(t),\cdot\cdot\cdot$ can be understood similarly.

On the other hand,
inspired by the previous maximum principle in Section 3, we consider the following two BSDEs:

\begin{align}
\left\{
\begin{aligned}
d
\begin{pmatrix}
\widetilde{p}^{0,\kappa}(t)\\
\widetilde{p}^{1,\kappa}(t)
\end{pmatrix}
&=-\bigg[\begin{pmatrix}
b^\kappa_x(t)& 0\\
b^\kappa_{x'}(t)&
b^\kappa_{x}(t)+ \widehat{b^\kappa_{x'}}(t)
\end{pmatrix}
\begin{pmatrix}
\widetilde{p}^{0,\kappa}(t)\\
\widetilde{p}^{1,\kappa}(t)
\end{pmatrix}
+\begin{pmatrix}
\sigma^\kappa_x(t)& 0\\
\sigma^\kappa_{x'}(t)& 0
\end{pmatrix}\begin{pmatrix}
\widetilde{q}^{0,\kappa}(t)\\
\widetilde{q}^{1,\kappa}(t)
\end{pmatrix}\bigg]dt\\
&\quad
+\begin{pmatrix}
\widetilde{q}^{0,\kappa}(t)\\
\widetilde{q}^{1,\kappa}(t)
\end{pmatrix}
dW(t),\q t\in[0,T],\\
\widetilde{p}(T)&=\begin{pmatrix}
\Phi^\kappa_x(T)\\
\Phi^\kappa_{x'}(T)
\end{pmatrix},
\end{aligned}
\right.
\end{align}

and
\begin{align}
\left\{
\begin{aligned}
d\widetilde{P}^\kappa(t)&=-\bigg\{G^{\widetilde{P}^\kappa}(t)\widetilde{P}^\kappa(t)
+G^{\widetilde{Q}^\kappa}(t)\widetilde{Q}^\kappa(t)+G^{\tilde{p}^\kappa}(t)\tilde{p}^\kappa(t)
+G^{\widetilde{q}^\kappa}(t)\widetilde{q}^\kappa(t)\bigg\}dt+\widetilde{Q}^\kappa(t)dW(t),\\
\widetilde{P}^\kappa(T)&=G^\Phi(T),
\end{aligned}
\right.
\end{align}
where
\begin{equation}\label{5.4-1-1-1-3}
\begin{aligned}
\widetilde{P}^\kappa(t)=&(\widetilde{P}^{0,\kappa}(t),\widetilde{P}^{1,\kappa}(t))^\intercal,\q \widetilde{Q}^\kappa(t)=(\widetilde{Q}^{0,\kappa}(t),\widetilde{Q}^{1,\kappa}(t))^\intercal,\\
\widetilde{p}^\kappa(t)=&(\widetilde{p}^{0,\kappa}(t),\widetilde{p}^{1,\kappa}(t))^\intercal,\q \widetilde{q}^\kappa(t)=(\widetilde{q}^{0,\kappa}(t),\widetilde{q}^{1,\kappa}(t))^\intercal,\\
G^{\widetilde{P}^\kappa}(t)=&
\begin{pmatrix}
2b^\kappa_x(t)+(\sigma^\kappa_x(t))^2 &0 \\
0& 2b^\kappa_x(t)+(\sigma^\kappa_x(t))^2   \\
%
\end{pmatrix},  \q
G^{\widetilde{Q}^\kappa}(t)=
\begin{pmatrix}
2\sigma^\kappa_x(t)& 0      \\
0& 0        \\
%
\end{pmatrix}, \\
G^{\widetilde{p}^\kappa}(t)=&
\begin{pmatrix}
b^\kappa_{xx}(t)& 0\\
0 &b^\kappa_{xx}(t)\\
%
\end{pmatrix},\q
G^{\widetilde{q}^\kappa}(t)=
\begin{pmatrix}
\sigma^\kappa_{xx}(t) & 0\\
0 &0\\
%
\end{pmatrix}, \q
G^\Phi(T)=
\begin{pmatrix}
\Phi^\kappa_{xx}(T)\\
0\\
%
\end{pmatrix}.
\end{aligned}
\end{equation}
According to (\ref{9.6})-(\ref{5.4-1-1-1-3}), similar to the proof of the previous maximum principle we can deduce
\begin{align} \nonumber
\begin{aligned}
&\mu_\kappa\bigg\{\mathbb{E}\Big[ \Psi(X^\varepsilon(T), \widehat{X}^\varepsilon(T), Y^\varepsilon(0))\Big]-
 \mathbb{E}\Big[ \Psi(X^\kappa(T), \widehat{X}^\kappa(T), Y^\kappa(0))\Big] \bigg\}\\
 &=\mathbb{E}\bigg[\int_0^T\mathbf{1}_{E_\varepsilon}(t)\bigg(\widetilde{p}^{0,\kappa}(t)\delta b^\kappa(t,v(t))+\widetilde{p}^{1,\kappa}(t)\mathbb{E}[\delta b^\kappa(t,v(t))|\sF^\alpha_{t-}]+\widetilde{q}^{0,\kappa}(t)\delta\sigma^\kappa(t,v(t))\\
 &\quad+\frac{1}{2} \Big(\widetilde{P}^{0,\kappa}(t)(\delta\sigma^\kappa(t,v(t)))^2 + \widetilde{P}^{1,\kappa}(t)\mathbb{E}[(\delta\sigma^\kappa(t,v(t)))^2|\sF^\alpha_{t-}]\Big)\bigg)dt\bigg]\\
 &\quad +\mu_\kappa\mathbb{E}\Big[\Psi_y(X^\kappa(T), \widehat{X}^\kappa(T), Y^\kappa(0))\Big]\tilde{Y}^\kappa(0)+o(\varepsilon).
  \end{aligned}
\end{align}
Consequently, (\ref{6.5-111}) can be written as
\begin{equation}\label{6.5-1113333}
\begin{aligned}
0&\leq  J_\kappa(v_{\kappa,\varepsilon}(\cdot))-J_\kappa(v_\kappa(\cdot))+\sqrt{\kappa}\varepsilon\\
 &\leq \Big(\lambda_\kappa+\mu_\kappa\mathbb{E}\Big[\Psi_y(X^\kappa(T), \widehat{X}^\kappa(T), Y^\kappa(0))\Big]\Big)\tilde{Y}^\kappa(0)    \\
 &\quad+\mathbb{E}\bigg[\int_0^T\mathbf{1}_{E_\varepsilon}(t)\bigg(\widetilde{p}^{0,\kappa}(t)\delta b^\kappa(t,v(t))+\widetilde{p}^{1,\kappa}(t)\mathbb{E}[\delta b^\kappa(t,v(t))|\sF^\alpha_{t-}]+\widetilde{q}^{0,\kappa}(t)\delta\sigma^\kappa(t,v(t))\\
 &\quad+\frac{1}{2} \Big(\widetilde{P}^{0,\kappa}(t)(\delta\sigma^\kappa(t,v(t)))^2 + \widetilde{P}^{1,\kappa}(t)\mathbb{E}[(\delta\sigma^\kappa(t,v(t)))^2|\sF^\alpha_{t-}]\Big)\bigg)dt\bigg]
    +\sqrt{\kappa}\varepsilon+o(\varepsilon).
\end{aligned}
\end{equation}

Next, let us consider the following SDE:
\begin{align} \nonumber
\left\{
\begin{aligned}
d\Upsilon^\kappa(t)&=f_y^\kappa(t)\Upsilon^\kappa(t)dt+f_z^\kappa(t)\Upsilon^\kappa(t)dW(t),\\
\Upsilon^\kappa(0)&=\lambda_\kappa+\mu_\kappa\mathbb{E}\Big[\Psi_y(X^\kappa(T), \widehat{X}^\kappa(T), Y^\kappa(0))\Big].
\end{aligned}
\right.
\end{align}
Then, applying It\^{o}'s formula to $\Upsilon^\kappa(t)\tilde{Y}^\kappa(t)$ one has
\begin{equation}\label{9.15}
\begin{aligned}
&\Big(\lambda_\kappa+\mu_\kappa\mathbb{E}\Big[\Psi_y(X^\kappa(T), \widehat{X}^\kappa(T), Y^\kappa(0))\Big]\Big)\tilde{Y}^\kappa(0) \\
&=\mathbb{E}\bigg[\int_0^T\Upsilon^\kappa(t)\mathbf{1}_{E_\varepsilon}(t)\bigg\{p^{0,\kappa}(t)\delta b^\kappa(t,v(t))
                +p^{1,\kappa}(t)\mathbb{E}[\delta b^\kappa(t,v(t))|\sF^\alpha_{t-}]+q^{0,\kappa}(t)\delta\sigma^\kappa(t,v(t))\\
                &\quad+\frac{1}{2} \Big(P^{0,\kappa}(t)(\delta\sigma^\kappa(t,v(t)))^2 + P^{1,\kappa}(t)\mathbb{E}[(\delta\sigma^\kappa(t,v(t)))^2|\sF^\alpha_{t-}]\Big)\\
          &\quad+f\Big(t,X^\kappa(t),\widehat{ X}^\kappa(t) ,Y^\kappa(t),Z^\kappa(t)+p^{0,\kappa}(t)
          \delta\sigma^\kappa(t,v(t)),v(t),\alpha(t-)\Big)\\
&\qq\qq\qq\qq\qq \qq\qq\qq\qq
-f\Big(t,X^\kappa(t),\widehat{X}^\kappa(t),Y^\kappa(t),Z^\kappa(t),v^\kappa(t),\alpha(t-)\Big)\bigg\}dt\bigg].
\end{aligned}
\end{equation}

Define the Hamiltonian, for $\xi,\xi',\bar{\xi},\bar{\xi}'\in L^1(\Omega, \mathcal{F},\mathbb{P};\dbR)$,
$t\in[0,T]$, $y,z,\widetilde{p}^0, \tilde{p}^1,\widetilde{q}^0,\widetilde{P}^0, \widetilde{P}^1,p^0, p^1,q^0,P^0,$ \\
$P^1,\gamma\in \mathbb{R}$, and $v\in V$, $i\in \mathcal{I}$,
\begin{align} \nonumber
\begin{aligned}
&H(t,\xi,\xi',y,z,v,\bar{\xi},\bar{\xi}',\bar{v},\widetilde{p}^0, \tilde{p}^1,\widetilde{q}^0,\widetilde{P}^0, \widetilde{P}^1,p^0, p^1,q^0,P^0, P^1,\gamma,i)\\
&=(\widetilde{p}^0+\gamma p^0) b(t,\xi,\xi',v,i)+(\widetilde{q}^0+\gamma q^0) \sigma(t,\xi,\xi',v,i)
        +(\widetilde{p}^1+\gamma p^1) \mathbb{E}[b(t,\xi,\xi',v,i)|\sF^\alpha_{t-}]\\
&\quad+\frac{1}{2}(\widetilde{P}^0+\gamma P^0)(\sigma(t,\xi,\xi',v,i)-\sigma(t,\bar{\xi},\bar{\xi}',\bar{v},i))^2,
\\
&\quad+\frac{1}{2} (\widetilde{P}^1+\gamma P^1)\mathbb{E}\Big[(\sigma(t,\xi,\xi',v,i)
-\sigma(t,\bar{\xi},\bar{\xi}',\bar{v},i))^2\Big|\sF^\alpha_{t-}\Big]\\
&\quad+\gamma f(t,\xi,\xi',y,z+p^0(\sigma(t,\xi,\xi',v,i)-\sigma(t,\bar{\xi},\bar{\xi}',\bar{v},i)),v,i).
\\
\end{aligned}
\end{align}
Thanks to (\ref{6.5-1113333}) and (\ref{9.15}), we derive
\begin{align} \nonumber
\begin{aligned}
&0\leq \mathbb{E}\bigg[\int_0^T\Big\{
H(t,X^\kappa(t),\widehat{X^\kappa}(t),Y^\kappa(t),Z^\kappa(t),v,X^\kappa(t),\widehat{X^\kappa}(t),v^\kappa(t),
\widetilde{p}^{0,\kappa}(t), \tilde{p}^{1,\kappa}(t),\widetilde{q}^{0,\kappa}(t),\\
&\qq\qq\qq
\widetilde{P}^{0,\kappa}(t), \widetilde{P}^{1,\kappa}(t),p^{0,\kappa}(t), p^{1,\kappa}(t),q^{0,\kappa}(t),P^{0,\kappa}(t), P^{1,\kappa}(t),\Upsilon^\kappa(t), \alpha(t-))\\
&\quad-H(t,X^\kappa(t),\widehat{X^\kappa}(t),Y^\kappa(t),Z^\kappa(t),v^\kappa(t),X^\kappa(t),\widehat{X^\kappa}(t),v^\kappa(t),
\widetilde{p}^{0,\kappa}(t), \tilde{p}^{1,\kappa}(t),\widetilde{q}^{0,\kappa}(t),\\
&\qq\qq\qq
\widetilde{P}^{0,\kappa}(t), \widetilde{P}^{1,\kappa}(t),p^{0,\kappa}(t), p^{1,\kappa}(t),q^{0,\kappa}(t),P^{0,\kappa}(t), P^{1,\kappa}(t),\Upsilon^\kappa(t)),\alpha(t-) \Big\}\mathbf{1}_{E_\varepsilon}(t)dt\bigg]\\
&\q+\sqrt{\kappa}\varepsilon+o(\varepsilon).
\end{aligned}
\end{align}
Recall $|E_\varepsilon|=\varepsilon$, one has from the arbitrariness of $\varepsilon$, for all $v\in V$, $\mathbb{P}$-a.s, a.e.
\begin{align} \nonumber
\begin{aligned}
&H(t,X^\kappa(t),\widehat{X^\kappa}(t),Y^\kappa(t),Z^\kappa(t),v,X^\kappa(t),\widehat{X^\kappa}(t),v^\kappa(t),\widetilde{p}^{0,\kappa}(t), \tilde{p}^{1,\kappa}(t),\widetilde{q}^{0,\kappa}(t),\\
&\qq\qq\qq
\widetilde{P}^{0,\kappa}(t), \widetilde{P}^{1,\kappa}(t),p^{0,\kappa}(t), p^{1,\kappa}(t),q^{0,\kappa}(t),P^{0,\kappa}(t), P^{1,\kappa}(t),\Upsilon^\kappa(t),\alpha(t-))\\
&\geq H(t,X^\kappa(t),\widehat{X^\kappa}(t),Y^\kappa(t),Z^\kappa(t),v^\kappa(t),X^\kappa(t),\widehat{X^\kappa}(t),v^\kappa(t),\widetilde{p}^{0,\kappa}(t), \tilde{p}^{1,\kappa}(t),\widetilde{q}^{0,\kappa}(t),\\
&\qq\qq\qq
\widetilde{P}^{0,\kappa}(t), \widetilde{P}^{1,\kappa}(t),p^{0,\kappa}(t), p^{1,\kappa}(t),q^{0,\kappa}(t),P^{0,\kappa}(t), P^{1,\kappa}(t),\Upsilon^\kappa(t),\alpha(t-)) -\sqrt{\kappa}.
\end{aligned}
\end{align}

From the definitions of  $\lambda_\kappa$ and $\mu_\kappa$ (see (\ref{9.6-2})), we know $|\lambda_\kappa|^2+|\mu_\kappa|^2=1.$
Consequently, there exists a subsequent of $(\lambda_\kappa,\mu_\kappa)$, still denoted by  $(\lambda_\kappa,\mu_\kappa)$,
 converging to  $(\lambda,\mu)$ with $|\lambda|^2+|\mu|^2=1$, as $\kappa\rightarrow\infty$.
According to the item $\mathrm{ii)}$ of (\ref{9.4}), one could choose a subsequent of  $(\lambda_\kappa,\mu_\kappa)$ such that
\begin{align} \nonumber
\begin{aligned}
&(X^\kappa(\cdot),\widehat{X^\kappa}(\cd),Y^\kappa(\cdot),Z^\kappa(\cdot), v^\kappa(\cdot), \widetilde{p}^{0,\kappa}(\cdot), \tilde{p}^{1,\kappa}(\cdot),\widetilde{q}^{0,\kappa}(\cdot),
 \widetilde{P}^{0,\kappa}(\cdot), \widetilde{P}^{1,\kappa}(\cdot),p^{0,\kappa}(\cdot), p^{1,\kappa}(\cdot),\\
 &\q\q\q\q        q^{0,\kappa}(\cdot),P^{0,\kappa}(\cdot), P^{1,\kappa}(\cdot),\Upsilon^\kappa(\cdot))\rightarrow\\
&(\bar{X} (\cdot),\widehat{X}(\cd),\bar{Y}(\cdot),\bar{Z} (\cdot), \bar{v}(\cdot), \widetilde{p}^{0}(\cdot), \tilde{p}^{1}(\cdot),\widetilde{q}^{0}(\cdot),
 \widetilde{P}^{0}(\cdot), \widetilde{P}^{1}(\cdot),p^{0}(\cdot), p^{1}(\cdot),q^{0}(\cdot),P^{0}(\cdot), P^{1}(\cdot),\Upsilon(\cdot)),
\end{aligned}
\end{align}
where $((\widetilde{p}^{0}, \tilde{p}^{1})^\intercal,(\widetilde{q}^{0},\widetilde{q}^{1})^\intercal)$ is the solution of the following BSDE:
\begin{align} \nonumber
\left\{
\begin{aligned}
d
\begin{pmatrix}
\widetilde{p}^{0}(t)\\
\widetilde{p}^{1}(t)
\end{pmatrix}
&=-\bigg[\begin{pmatrix}
b_x(t)& 0\\
b_{x'}(t)&
b_{x}(t)+ \widehat{b}_{x'}(t)
\end{pmatrix}
\begin{pmatrix}
\widetilde{p}^{0}(t)\\
\widetilde{p}^{1}(t)
\end{pmatrix}
+\begin{pmatrix}
\sigma_x(t)& 0\\
\sigma_{x'}(t)& 0
\end{pmatrix}\begin{pmatrix}
\widetilde{q}^{0}(t)\\
\widetilde{q}^{1}(t)
\end{pmatrix}\bigg]dt
+\begin{pmatrix}
\widetilde{q}^{0}(t)\\
\widetilde{q}^{1}(t)
\end{pmatrix}
dW(t),\\
\begin{pmatrix}
\widetilde{p}^{0}(T)\\
\widetilde{p}^{1}(T)
\end{pmatrix}
&=\begin{pmatrix}
\Phi_x(T)\\
\Phi_{x'}(T)
\end{pmatrix},
\end{aligned}
\right.
\end{align}
and $(\widetilde{P}, \widetilde{Q})=((\widetilde{P}^{0},\widetilde{P}^{1})^\intercal,\ (\widetilde{Q}^{0},\widetilde{Q}^{1})^\intercal)$ is the solution of BSDE:
\begin{align} \nonumber
\left\{
\begin{aligned}
d\widetilde{P}(t)&=-\bigg\{G^{\widetilde{P}}(t)\widetilde{P}(t)
+G^{\widetilde{Q}}(t)\widetilde{Q}(t)+G^{\tilde{p}}(t)\tilde{p}(t)+G^{\widetilde{q}}(t)\widetilde{q}(t)\bigg\}d+\widetilde{Q}(t)dW(t),\\
P(T)&=G^\Phi(T),
\end{aligned}
\right.
\end{align}
where
\begin{align} \nonumber
\begin{aligned}
\tilde{p}(t)=&(\widetilde{p}^{0}(t), \tilde{p}^{1}(t))^\intercal,\   \tilde{q}(t)=(\widetilde{q}^{0}(t),\widetilde{q}^{1}(t))^\intercal,\\
G^{\widetilde{P}}(t)=&
\begin{pmatrix}
2b_x(t)+(\sigma_x(t))^2& 0  \\
0& 2b_x(t)+(\sigma_x(t))^2&   \\
\end{pmatrix}, \q
G^{\widetilde{Q}}(t)=
\begin{pmatrix}
2\sigma_x(t)& 0  \\
0& 0
\end{pmatrix}, \\
G^{\widetilde{p}}(t)=&
\begin{pmatrix}
b_{xx}(t)& 0\\
0 &b_{xx}(t)
\end{pmatrix},\q
G^{\widetilde{q}}(t)=
\begin{pmatrix}
\sigma_{xx} & 0\\
0 &0
\end{pmatrix}, \q
G^\Phi(T)=
\begin{pmatrix}
\Phi_{xx}(T)\\
0
\end{pmatrix}.
\end{aligned}
\end{align}

\begin{theorem}
Let \textbf{Assumptions 1}-\textbf{4}  be in force. Let
$\bar{v}$ be the optimal control of the problem  (\ref{3.1})-(\ref{3.3})-(\ref{3.4})-(\ref{3.5})-(\ref{9.1}). By
$\bar{X}, (\bar{Y},\bar{Z})$ we denote the corresponding solutions to (\ref{3.1}) and (\ref{3.3}) with the optimal control $\bar{v}$,
respectively. Then there
exists two constants $\lambda, \mu$ satisfying $|\lambda|^2+|\mu|^2=1$ such that, for $v\in V$, $\mathbb{P}$-a.s., a.e.,

\begin{align} \nonumber
\begin{aligned}
&H(t,\bar{X}(t),\widehat{\bar{X}}(t),\bar{Y}(t),\bar{Z}(t),v,\bar{X}(t),\widehat{\bar{X}}(t),\bar{v}(t),\widetilde{p}^{0}(t), \tilde{p}^{1}(t),\widetilde{q}^{0}(t), \widetilde{P}^{0}(t), \widetilde{P}^{1}(t),\\
&\qq  p^{0}(t), p^{1}(t),q^{0}(t),P^{0}(t), P^{1}(t),\Upsilon(t)(t),\alpha(t-))\\
&\geq H(t,\bar{X}(t),\widehat{\bar{X}}(t),\bar{Y}(t),\bar{Z}(t),\bar{v}(t),\bar{X}(t),\widehat{\bar{X}}(t),\bar{v}(t),\widetilde{p}^{0}(t), \tilde{p}^{1}(t),\widetilde{q}^{0}(t),\widetilde{P}^{0}(t), \widetilde{P}^{1}(t),\\
&\qq  p^{0}(t), p^{1}(t),q^{0}(t),P^{0}(t), P^{1}(t),\Upsilon(t),\alpha(t-)).
\end{aligned}
\end{align}
\end{theorem}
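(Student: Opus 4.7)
The proof proceeds in three logical stages; the bulk of the calculation has in fact already been carried out in the preceding display chain $(6.5\text{-}111)$--$(9.15)$, so my plan is to harvest those inequalities, collapse them onto a single pointwise statement, and then pass to the limit in the Ekeland parameter.

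\textbf{Stage 1: A maximum principle for the penalized problem.} For each fixed $\kappa>0$, I would combine inequality (\ref{6.5-1113333}) with the identity (\ref{9.15}) (obtained by It\^{o}'s formula applied to $\Upsilon^\kappa(t)\tilde Y^\kappa(t)$) to rewrite the Ekeland variational inequality purely in terms of the Hamiltonian $H$ evaluated along $(X^\kappa,\widehat{X^\kappa},Y^\kappa,Z^\kappa,v^\kappa)$ versus the spike value $v$. This is exactly the displayed integral inequality immediately preceding the theorem. Since $|E_\varepsilon|=\varepsilon$, dividing by $\varepsilon$ and applying the Lebesgue differentiation theorem (after first taking $E_\varepsilon$ to be an arbitrary Borel set and then localizing) yields the pointwise ``$\sqrt{\kappa}$-approximate'' maximum principle
\begin{align*}
H\bigl(t,X^\kappa(t),\widehat{X^\kappa}(t),Y^\kappa(t),Z^\kappa(t),v,\ldots,\Upsilon^\kappa(t),\alpha(t-)\bigr)
\geq H\bigl(t,\ldots,v^\kappa(t),\ldots\bigr)-\sqrt{\kappa},
\end{align*}
valid for all $v\in V$, $\mathbb{P}$-a.s.\ and a.e.\ $t\in[0,T]$.

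\textbf{Stage 2: Compactness of the multipliers.} By the very definition (\ref{9.6-2}) of $\lambda_\kappa,\mu_\kappa$ one has $|\lambda_\kappa|^2+|\mu_\kappa|^2=1$, so I can extract a subsequence (still indexed by $\kappa$) along which $(\lambda_\kappa,\mu_\kappa)\to(\lambda,\mu)$ on the unit sphere. Simultaneously, the item (ii) of (\ref{9.4}) gives $d(v_\kappa,\bar v)\to 0$, i.e.\ convergence in measure on $[0,T]\times\Omega$. Because $\bar v\in\mathcal{V}_{0,T}$ is $L^8$-bounded and $v_\kappa$ is obtained from $\bar v$ by a measurable perturbation, a further subsequence converges a.e.\ with uniform $L^{\beta}$-control for $\beta<8$.

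\textbf{Stage 3: Stability of the adjoint processes.} I would then invoke standard SDE/BSDE stability estimates (Lemma~\ref{le 6.2} in the appendix together with the moment estimates of Section~3) to deduce that, as $\kappa\downarrow 0$,
\begin{align*}
(X^\kappa,Y^\kappa,Z^\kappa)&\longrightarrow(\bar X,\bar Y,\bar Z),\\
(\tilde p^\kappa,\tilde q^\kappa,\tilde P^\kappa,\tilde Q^\kappa)&\longrightarrow(\tilde p,\tilde q,\tilde P,\tilde Q),\\
(p^\kappa,q^\kappa,P^\kappa,Q^\kappa)&\longrightarrow(p,q,P,Q),\qquad \Upsilon^\kappa\longrightarrow\Upsilon,
\end{align*}
in the appropriate $\mathcal{S}^\beta\times\mathcal{H}^{2,\beta/2}$ norms. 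Since the Hamiltonian $H$ is jointly continuous in all of its stochastic arguments and the drivers of the adjoint equations depend continuously on $(X^\kappa,Y^\kappa,Z^\kappa,v^\kappa)$, the pointwise inequality of Stage 1 is preserved under the limit, and $\sqrt{\kappa}\to 0$ eliminates the error term. This yields precisely the claimed variational inequality.

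\textbf{Main obstacle.} The essential difficulty is the BSDE stability in Stage 3: because $v_\kappa\to\bar v$ only in measure (not strongly in $L^p$), and because the driver $f$ together with the second-order adjoint driver $G^f$ depend quadratically on $Z$ through the term $p^0\delta\sigma$, one needs to control cross terms $p^{0,\kappa}\delta\sigma^\kappa$ and $P^{0,\kappa}(\delta\sigma^\kappa)^2$ uniformly in $\kappa$. This is handled by first establishing uniform $L^\beta$-bounds on $(p^{0,\kappa},P^{0,\kappa})$ via Lemma~\ref{le 6.1}, then using the linear structure of the adjoint equations (Remark~\ref{re 3.1}) together with dominated convergence along the chosen subsequence; the uniform integrability needed here is the most delicate analytical point of the argument, and is the reason Assumption 4 imposes boundedness of $D^2\Psi$.
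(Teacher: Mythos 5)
Your proposal follows essentially the same route as the paper: the approximate maximum principle for the Ekeland-penalized problem obtained from (\ref{6.5-1113333}) and (\ref{9.15}) with the $-\sqrt{\kappa}$ error, compactness of $(\lambda_\kappa,\mu_\kappa)$ on the unit sphere, and passage to the limit using item $\mathrm{ii)}$ of (\ref{9.4}). Your Stage 3 in fact spells out the stability of the state and adjoint processes more explicitly than the paper, which merely asserts the subsequential convergence, but the argument is the same.
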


\section{Concluding remarks}

\indent $\mathrm{i)}$ If $f$ is independent of $(y,z)$,
 the  relation (\ref{1.0}) can be replaced by the following ``weaker" relation
 \begin{equation}\label{11-1}
\mathbb{E}[Y^{1,\varepsilon}(t)]= \mathbb{E}[p_0(t)X^{1,\varepsilon}(t)+p_1(t)\mathbb{E}[X^{1,\varepsilon}(t)|\sF^\alpha_{t-}]].
\end{equation}
In addition, notice that (\ref{11-1}) can be written as
\begin{align} \nonumber
\mathbb{E}[Y^{1,\varepsilon}(t)]= \mathbb{E}[(p_0(t)+\mathbb{E}[p_1(t)|\sF^\alpha_{t-}])X^{1,\varepsilon}(t)].
\end{align}
According to the item $\mathrm{ii)}$ of \autoref{re 3.1}, our adjoint equation (\ref{5.2-2}) is just the equation (4.1) \cite{Nguyen-Yin-Nguyen-21}.
In the meantime,  our second-order adjoint equation (\ref{5.4}) naturally  reduces to  the  BSDE (4.2) \cite{Nguyen-Yin-Nguyen-21}, i.e.,
$P_1=Q_1=0$.

\indent $\mathrm{ii)}$  In \cite{Nguyen-Yin-Nguyen-21}, the cost functional is of the form
$$
J(t_0,\alpha_0,v(\cdot))=\mathbb{E}\bigg[
\int_0^Tf(s,X(s), \mathbb{E}[X(s)|\sF^\alpha_{s-}],v(s),\alpha(s-))ds
+\Phi(X(T), \mathbb{E}[X(T)|\sF^\alpha_{T-}],\alpha(T))\bigg].
$$
If we define
$$
Y(t):=\mathbb{E}\bigg[
\int_t^Tf(s,X(s), \mathbb{E}[X(s)|\sF^\alpha_{s-}],v(s),\alpha(s-))ds
+\Phi(X(T), \mathbb{E}[X(T)|\sF^\alpha_{T-}],\alpha(T))\Big|\sF_t\bigg],
$$
then
$$
\begin{aligned}
&Y(t)+\int_0^tf(s,X(s), \mathbb{E}[X(s)|\sF^\alpha_{s-}],v(s),\alpha(s-))ds\\
&=\mathbb{E}\bigg[
\int_0^Tf(s,X(s), \mathbb{E}[X(s)|\sF^\alpha_{s-}],v(s),\alpha(s-))ds
+\Phi(X(T), \mathbb{E}[X(T)|\sF^\alpha_{T-}],\alpha(T))\Big|\sF_t\bigg],
\end{aligned}
$$
is an $(\mathbb{F},\mathbb{P})$-martingale. From the martingale representation theorem
(Proposition 3.9 \cite{Donnelly-Heunis-12}), for each $t\in[0,T]$ there exists a unique
pair $(Z(\cdot),K(\cdot))\in \mathcal{H}_{\mathbb{F}}^{2}(0,T;\mathbb{R}^{n\times d})\times {\mathcal{K}}_{\mathbb{F}}^{2}(0,T;\mathbb{R}^n)$,
where ${\mathcal{K}}_{\mathbb{F}}^{2}(0,T;\mathbb{R}^n)$ is the family of $k=(k_{ij})_{i,j\in \mathcal{I}}$
such that the $\dbF$-progressively measurable process $k_{ij}$ satisfies  $k_{ii}=0$ and
$$\mathbb{E}\Big[ \int^T_0 \sum\limits_{i,j\in \mathcal{I}} |k_{ij}(t)|^2\lambda_{ij}\mathbf{1}_{\{\alpha(t-)=i\}}dt \Big]< \infty,$$
satisfying

$$
\begin{aligned}
Y(t)&=\Phi(X(T), \mathbb{E}[X(T)|\sF^\alpha_{T-}],\alpha(T))-
\int_t^Tf(s,X(s), \mathbb{E}[X(s)|\sF^\alpha_{s-}],v(s),\alpha(s-))ds\\
&\q +\int_t^T Z(s)dW(s)+\int_t^T\sum\limits_{i,j\in\mathcal{I}}K_{ij}(s)dM_{ij}(s), \ t\in[0,T].
\end{aligned}
$$
Since the above BSDE involves the martingale term of regime switching, the first- and second-order adjoint equations in
\cite{Nguyen-Yin-Nguyen-21} are also (conditional mean-field) BSDEs with regime switching.
In our case the BSDE (\ref{3.3}) is a classical BSDE (without the martingale term of regime switching), hence, the
 the first- and second-order adjoint equations are also classical BSDEs.

\section{Appendix}

\subsection{A basic estimate for BSDEs}

In this subsection let  $\gamma>1$. Consider the following BSDE:
\begin{equation}\label{6.1}
\begin{aligned}
Y(t)&=\xi+\int_t^TF(s,Y(s),Z(s),\alpha(s-))ds-\int_t^TZ(s)dW(s), \ t\in[0,T].
\end{aligned}
\end{equation}

Assume
$$
F: \Omega\times [0,T]\times \mathbb{R}^m\times\mathbb{R}^{m\times d}\times \mathcal{I}\rightarrow  \mathbb{R}^m
$$
satisfies
 \begin{description}\item[ ]\textbf{Assumption 5.}
 {\rm(i)}   There exists some constant $L>0$ such that for $t\in[0,T]$, $y,y'\in \mathbb{R}^m$, $z,z'\in \mathbb{R}^{m\times d}$, $i\in \mathcal{I}$,
$$
|F(t,y,z,i)-F(t,y',z',i)|\leq L(|y-y'|+|z-z'|).
$$
 {\rm(ii)}  $\mathbb{E}\Big[\Big(\int_0^T |F(t,0,0,i)|dt\Big)^\gamma\Big]<\infty.$
 \end{description}

\begin{lemma}\label{le 6.1}
Under \textbf{Assumption 5},  for $\xi\in L^\gamma(\Omega, \sF_T,\mathbb{P},\mathbb{R}^m)$, the BSDE (\ref{6.1})
exists a unique  solution $(Y(\cdot),Z(\cdot))\in \mathcal{S}_\mathbb{F}^\gamma(0,T;\mathbb{R}^m)\times
\mathcal{H}_{\mathbb{F}}^{2, \frac{\gamma}{2}}(0,T;\mathbb{R}^{m\times d})$.
\end{lemma}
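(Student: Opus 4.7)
The plan is to obtain Lemma 7.1 by a standard contraction/Picard-iteration argument in the product space $\mathcal{S}_{\mathbb{F}}^{\gamma}(0,T;\mathbb{R}^m) \times \mathcal{H}_{\mathbb{F}}^{2,\gamma/2}(0,T;\mathbb{R}^{m\times d})$, equipped with the weighted norm
\[
\|(Y,Z)\|_{\beta}^{\gamma} := \mathbb{E}\Big[\sup_{t\in[0,T]} e^{\beta \gamma t}|Y(t)|^{\gamma} + \Big(\int_0^T e^{2\beta t}|Z(t)|^2 dt\Big)^{\gamma/2}\Big]
\]
for a sufficiently large constant $\beta>0$ to be chosen once the Lipschitz constant $L$ is taken into account. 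First, for any fixed $(y,z)$ in the space above, one freezes the driver and defines $\Phi(y,z) := (Y,Z)$ as the solution of the linear BSDE
\[
Y(t) = \xi + \int_t^T F(s,y(s),z(s),\alpha(s-))\,ds - \int_t^T Z(s)\,dW(s).
\]
The Lipschitz hypothesis (i) together with $y\in \mathcal{S}^{\gamma}$, $z\in \mathcal{H}^{2,\gamma/2}$ and the integrability (ii) yields that $\xi + \int_0^T F(s,y(s),z(s),\alpha(s-))\,ds$ lies in $L^{\gamma}(\Omega,\mathcal{F}_T,\mathbb{P})$, and the martingale representation theorem then produces a unique $\mathbb{F}$-adapted pair $(Y,Z)$ satisfying the identity, with the required integrability coming from Doob's inequality (for $Y$) and BDG (for $Z$).

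The second step is the a priori estimate. For the difference of two images $(Y,Z)=\Phi(y,z)$ and $(Y',Z')=\Phi(y',z')$, set $\bar Y := Y-Y'$, $\bar Z := Z-Z'$, $\bar y := y-y'$, $\bar z := z-z'$. The pair $(\bar Y,\bar Z)$ solves a BSDE with zero terminal condition and driver of modulus bounded by $L(|\bar y|+|\bar z|)$. Applying It\^o's formula to $e^{\beta \gamma t}|\bar Y(t)|^{\gamma}$ for $\gamma\geq 2$, or, for $1<\gamma<2$, to the regularized function $(|\bar Y(t)|^2+\delta)^{\gamma/2}e^{\beta\gamma t}$ and letting $\delta\downarrow 0$ in the spirit of Briand–Delyon–Hu–Pardoux–Stoica, then using Young's inequality $L|\bar Y|^{\gamma-1}|\bar y|\leq \eta|\bar Y|^{\gamma} + C_{\eta,L}|\bar y|^{\gamma}$ and the analogous trick on $|\bar Y|^{\gamma-1}|\bar z|$ together with the BDG inequality to control $\int e^{2\beta s}|\bar Z|^2 ds$, yields
\[
\|(\bar Y,\bar Z)\|_{\beta}^{\gamma} \le \frac{C(L,\gamma)}{\beta}\,\|(\bar y,\bar z)\|_{\beta}^{\gamma}.
\]
Choosing $\beta$ large enough makes $\Phi$ a strict contraction on a complete metric space; the Banach fixed point theorem then gives existence and uniqueness. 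The same a priori estimate, applied to $(Y,Z)$ itself against $(y,z)=(0,0)$, shows $(Y,Z)\in\mathcal{S}^{\gamma}\times \mathcal{H}^{2,\gamma/2}$.

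The main technical obstacle is the $L^{\gamma}$ a priori estimate in the range $1<\gamma<2$: the map $y\mapsto |y|^{\gamma}$ fails to be $C^2$ at the origin, so It\^o's formula cannot be invoked directly and one must either regularize the power function or use the $L^{p}$-BSDE machinery of Briand et al. Once this estimate is secured, absorbing the $|\bar Z|^{2}|\bar Y|^{\gamma-2}$ quadratic variation term by a careful Young-type split and then absorbing the resulting $|\bar Y|^{\gamma}$ term into the $\int_t^T e^{\beta \gamma s}|\bar Y(s)|^{\gamma}ds$ piece produced by $\beta$-tilting is the only nontrivial calculation; the rest of the argument, including the reduction to a linear BSDE via martingale representation and the passage from local to global existence through the contraction, is entirely routine.
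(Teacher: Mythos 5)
Your overall strategy is the one the paper intends: the paper gives no proof at all, saying only that the result follows as in Proposition 3.2 and Theorem 4.2 of Briand--Delyon--Hu--Pardoux--Stoica, and your Picard/contraction argument in a $\beta$-weighted norm on $\mathcal{S}^{\gamma}\times\mathcal{H}^{2,\gamma/2}$, with the regularization of $|y|^{\gamma}$ (or the $L^{p}$-BSDE machinery) for $1<\gamma<2$, is exactly that standard route, and the a priori estimate you sketch (It\^{o} on the regularized power, Young's inequality, BDG, then $\beta$ large) is correct.

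There is, however, one step that does not go through as written. Your existence step for the frozen-driver linear BSDE asserts that ``the martingale representation theorem then produces a unique $\mathbb{F}$-adapted pair $(Y,Z)$'' with only a $dW$-integral. In this paper the filtration $\mathbb{F}$ is generated by $W$ \emph{and} the Markov chain $\alpha$, and in that filtration the representation theorem (Proposition 3.9 of Donnelly--Heunis, which the paper itself invokes in its concluding remarks) yields a $dW$-integral \emph{plus} integrals against the compensated jump martingales $M_{ij}$ of the chain. Concretely, for $F\equiv 0$ and $\xi=f(\alpha(T))$ non-deterministic, any solution would force $Y(t)=\mathbb{E}[\xi\,|\,\mathcal{F}_t]$, a process with jumps, to coincide with the continuous process $Y(0)+\int_0^t Z\,dW$, which is impossible; so the frozen map $\Phi$ is not even well defined on all of the product space for arbitrary $\xi\in L^{\gamma}(\Omega,\mathcal{F}_T,\mathbb{P})$. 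To repair the argument one must either restrict the terminal data and driver so that the purely discontinuous component of the representing martingale vanishes, or enlarge the solution concept to $(Y,Z,K)$ with orthogonal terms $\sum_{i,j}K_{ij}\,dM_{ij}$ and redo the $L^{\gamma}$ estimates in that setting. In fairness, the paper's statement of the lemma suffers from the same issue and silently glosses over it, but your write-up adopts the Brownian-filtration representation without flagging the point, and the contraction scheme cannot start until it is resolved.
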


\begin{lemma}\label{le 6.2}
Assume $F,\bar{F}$ satisfy \textbf{Assumptions 5} and $ \xi, \bar{\xi}\in L^\gamma(\Omega,\sF_T,\mathbb{P},\mathbb{R}^m)$.
Let $(Y(\cdot),Z(\cdot))$ and $(\bar{Y}(\cdot),\bar{Z}(\cdot))$
be the solutions to the equation (\ref{6.1}) with parameters  $(\xi, F)$ and $(\bar{\xi}, \bar{F})$, respectively.
Then there exists some positive constant $C$ depending on $\gamma,T, L$ such that
 $$
 \begin{aligned}
 &\mathbb{E}\bigg[\sup\limits_{t\in[0,T]}|Y(t)-\bar{Y}(t)|^\gamma+\Big(\int_0^T|Z(t)-\bar{Z}(t)|^2dt\Big)^\frac{\gamma}{2}\bigg]\\
 &\leq C\mathbb{E}\Big[ |\xi-\bar{\xi}|^\gamma+\Big(\int_0^T|F(t,\bar{Y}(t),\bar{Z}(t),\alpha(t-))
 -\bar{F}(t,\bar{Y}(t),\bar{Z}(t),\alpha(t-))| dt\Big)^\gamma\Big].
 \end{aligned}
 $$
In  particular, for $\bar{\xi}=\bar{F}\equiv0$,
\begin{equation}\label{6.2}
 \begin{aligned}
 &\mathbb{E}\bigg[\sup\limits_{t\in[0,T]}|Y(t)|^\gamma+\Big(\int_0^T|Z(t)|^2dt\Big)^\frac{\gamma}{2}\bigg]
 \leq C\mathbb{E}\Big[ |\xi|^\gamma+\Big(\int_0^T|F(t,0,0,\alpha(t-))| dt\Big)^\gamma\Big].
 \end{aligned}
\end{equation}
\end{lemma}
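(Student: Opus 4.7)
The plan is to linearize the difference of the drivers and then apply a weighted It\^o--BDG--Gronwall argument to produce the $L^\gamma$-bound on $\Delta Y := Y-\bar Y$ and $\Delta Z := Z-\bar Z$. First I would set $\Delta\xi := \xi-\bar\xi$ and introduce the inhomogeneity
\begin{equation*}
g(s) := F(s,\bar Y(s),\bar Z(s),\alpha(s-)) - \bar F(s,\bar Y(s),\bar Z(s),\alpha(s-)).
\end{equation*}
Using the Lipschitz condition in Assumption 5(i), a standard linearization (dividing the increment by $|\Delta Y|$ and $|\Delta Z|$ respectively, and setting the quotient to $0$ on the null set) produces bounded $\dbF$-progressively measurable processes $A,B$ with $|A|,|B|\le L$ such that
\begin{equation*}
F(s,Y,Z,\alpha(s-)) - \bar F(s,\bar Y,\bar Z,\alpha(s-)) = A(s)\Delta Y(s) + B(s)\Delta Z(s) + g(s),
\end{equation*}
so that $(\Delta Y,\Delta Z)$ solves the linear BSDE with terminal $\Delta\xi$ and driver $A\Delta Y + B\Delta Z + g$.

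Next I would apply It\^o's formula to $e^{\beta s}(|\Delta Y(s)|^{2}+\varepsilon)^{\gamma/2}$ for a regularization parameter $\varepsilon>0$ and a sufficiently large $\beta=\beta(L,\gamma)>0$. The $A\Delta Y$ term is absorbed into the $\beta$-drift; the $g$-term is controlled by H\"older; and the cross term involving $B\Delta Z$ is dominated, by Young's inequality, by a small multiple of $|\Delta Z|^{2}(|\Delta Y|^{2}+\varepsilon)^{(\gamma-2)/2}$ plus a constant times $(|\Delta Y|^{2}+\varepsilon)^{\gamma/2}$. Taking conditional expectation, then supremum, then using the BDG inequality on the martingale $\int_{0}^{\cdot}\gamma(|\Delta Y|^{2}+\varepsilon)^{(\gamma-2)/2}(\Delta Y)^{\intercal}\Delta Z\,dW$ and Gronwall's lemma yields the bound for $\dbE[\sup_t|\Delta Y(t)|^{\gamma}]$. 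The $Z$-bound $\dbE[(\int_{0}^{T}|\Delta Z|^{2}ds)^{\gamma/2}]$ is then read off by isolating the favourable quadratic-variation contribution in the same It\^o expansion and invoking the already-established $Y$-bound. Sending $\varepsilon\downarrow 0$ by dominated convergence yields the first claim, and specializing $\bar\xi=0$, $\bar F\equiv 0$ gives \eqref{6.2}.

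The main technical obstacle is the range $1<\gamma<2$, where $|y|^{\gamma}$ is not $C^{2}$ at the origin and the Hessian used in the cross-term absorption is singular as $|\Delta Y|\to 0$. The regularization $(|\Delta Y|^{2}+\varepsilon)^{\gamma/2}$ removes the singularity, and the crucial observation is that for $\gamma<2$ the Hessian term arising from It\^o's formula is \emph{nonpositive}, so it can simply be discarded after one has extracted the favourable $|\Delta Z|^{2}$-contribution needed to close the estimate. This is essentially the $L^{p}$-BSDE machinery of Briand--Delyon--Hu--Pardoux--Stoica; once it is invoked, the remainder of the argument is a routine Gronwall loop, and the Markov-chain coordinate $\alpha(s-)$ plays no active role since $F,\bar F$ enter only through their (uniform in $i$) Lipschitz constant $L$.
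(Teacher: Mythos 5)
Your proposal is correct and follows essentially the same route the paper takes, since the paper omits the proof and simply refers to Proposition 3.2 and Theorem 4.2 of Briand--Delyon--Hu--Pardoux--Stoica: linearize the driver difference into $A\Delta Y+B\Delta Z+g$ with $|A|,|B|\le L$, apply It\^o's formula to $e^{\beta t}(|\Delta Y(t)|^2+\varepsilon)^{\gamma/2}$, and close with BDG and Gronwall, the switching variable $\alpha(t-)$ entering only through the uniform Lipschitz constant. One small correction to your description of the delicate range $1<\gamma<2$: the full Hessian quadratic form of $(|y|^2+\varepsilon)^{\gamma/2}$ applied to $\Delta Z$ is nonnegative and is precisely the source of the favourable $|\Delta Z|^2$ term, being bounded below by $\gamma(\gamma-1)(|y|^2+\varepsilon)^{(\gamma-2)/2}|\Delta Z|^2$ via Cauchy--Schwarz, so the piece you drop after extracting this contribution is nonnegative (not nonpositive), which is exactly why discarding it on the favourable side of the identity is legitimate.
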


The proofs of the above two lemmata are similar to Proposition 3.2, Theorem 4.2 \cite{Briand-Delyon-Hu-Pardoux-Stoica-03}.
We omit them.

\subsection{Proof of  \autoref{pro 4.5}}

For $\mathrm{i)}$, from (\ref{6.2}), we have
$$
\begin{aligned}
&\mathbb{E}\bigg[\sup\limits_{t\in[0,T]}|Y^{1,\varepsilon}(t)|^\beta+\Big(\int_0^T|Z^{1,\varepsilon}(t)|^2dt\Big)^{\frac{\beta}{2}}\bigg]\\
&\leq C_\beta\mathbb{E}\bigg[ |\Phi_x(T)X^{1,\varepsilon}(T)+\Phi_{x'}(T)\widehat{X}^{1,\varepsilon}(T)|^\beta
+\bigg(\int_0^T\Big|f_x(t)X^{1,\varepsilon}(t)+f_{x'}(t)\widehat{X}^{1,\varepsilon}(t)\\
&\quad-\mathbf{1}_{E_\varepsilon}(t)\Big( f_z(t)p^0(t)\delta\sigma(t,v(t))
+q^0(t)\delta\sigma(t,v(t))+p^0(t)\delta b(t,v(t))+p^1(t)\widehat{\delta b(t,v(t))}\Big)\Big|dt\bigg)^\beta\bigg].
\end{aligned}
$$

\ms

Since $|\F_x(T)|+|\F_{x'}(T)|\leq(1+|\bar{X}(T)|+| \widehat{\bar{X}}(T)|)$, we get from H\"{o}lder inequality
$$
\begin{aligned}
&\mathbb{E}\bigg[ |\Phi_x(T)X^{1,\varepsilon}(T)+\Phi_{x'}(T)\widehat{X}^{1,\varepsilon}(T)|^\beta\bigg]\\
&\leq C\dbE\[(1+|\bar{X}(T)|^\b+| \widehat{\bar{X}}(T)|^\b)\cd (|X^{1,\varepsilon}(T)|+|\widehat{X}^{1,\varepsilon}(T)|)^\b  \]\\
&\leq C\bigg\{ \dbE\[ 1+|\bar{X}(T)|^{2\b}+| \widehat{\bar{X}}(T)|^{2\b}  \]  \bigg\}^\frac{1}{2}\cd
\bigg\{\dbE\[\sup_{t\in[0,T]}|X^{1,\e}|^{2\b}+\sup_{t\in[0,T]}|\widehat{X}^{1,\e}|^{2\b}\]  \bigg\}^\frac{1}{2}\leq C\e^{\frac{\b}{2}}.
\end{aligned}
$$

Let us estimate the terms
 $$\mathbb{E}\bigg[\Big(\int_{E_\varepsilon}|q^0(t)\delta\sigma(t,v(t))|dt\Big)^\beta\bigg],\q
 \mathbb{E}\bigg[\Big(\int_{E_\varepsilon}|p^1(t)\widehat{\delta b(t,v(t))}|dt\Big)^\beta\bigg].$$
The other terms can be calculated similarly.

First, as for the term $\mathbb{E}\bigg[\Big(\int_{E_\varepsilon}|q^0(t)\delta\sigma(t,v(t))|dt\Big)^\beta\bigg]$,
since $|\delta\sigma(t,v(t))|\leq L(1+|\bar{X}(t)|+|\widehat{\bar{X}}(t)|+|v(t)|+|\bar{v}(t)|)$, we have from   H\"{o}lder inequality
\begin{align} \nonumber
\begin{aligned}
&\mathbb{E}\bigg[\Big(\int_{E_\varepsilon}|q^0(t)\delta\sigma(t,v(t))|dt\Big)^\beta\bigg]\\
&\leq C_\beta \mathbb{E}\bigg[\bigg(\int_{E_\varepsilon}|q^0(t)|\big(1+|\bar{X}(t)|+|\widehat{\bar{X}}(t)|+|v(t)|
+|\bar{v}(t)|\big)dt\bigg)^\beta\bigg]\\
&\leq C_\beta\mathbb{E}\bigg[\bigg(\int_{E_\varepsilon}|q^0(t)|^2dt\bigg)^{\frac{\beta}{2}}
\bigg(\int_{E_\varepsilon}\big(1+|\bar{X}(t)|^2+|\hat{\bar{X}}(t)|^2+|v(t)|^2+|\bar{v}(t)|^2\big)dt\bigg)^\frac{\beta}{2}\bigg]\\
&\leq C_\beta\bigg\{\mathbb{E}\bigg[\bigg(\int_{E_\varepsilon}|q^0(t)|^2dt\bigg)^{\frac{4\beta}{8-\beta}}\bigg] \bigg\}^\frac{8-\beta}{8}
\bigg\{\mathbb{E}\bigg[\bigg(\int_{E_\varepsilon}\big(1+|\bar{X}(t)|^2+|\hat{\bar{X}}(t)|^2+|v(t)|^2
+|\bar{v}(t)|^2\big)dt\bigg)^4\bigg]\bigg\}^\frac{\beta}{8}\\
&\leq C_\beta\bigg\{\mathbb{E}\bigg[\bigg(\int_{E_\varepsilon}|q^0(t)|^2dt\bigg)^{\frac{4\beta}{8-\beta}}\bigg] \bigg\}^\frac{8-\beta}{8}
\bigg\{\varepsilon^3\mathbb{E}\bigg[ \int_{E_\varepsilon}\big(1+|\bar{X}(t)|^8+|\hat{\bar{X}}(t)|^8+|v(t)|^8
+|\bar{v}(t)|^8\big)dt \bigg]\bigg\}^\frac{\beta}{8}\\
&\leq \rho(\varepsilon)\varepsilon^{\frac{\beta}{2}},
\end{aligned}
\end{align}
where $\rho(\e)=C_\beta\bigg\{\mathbb{E}\bigg[\bigg(\int_{E_\varepsilon}|q^0(t)|^2dt\bigg)
^{\frac{4\beta}{8-\beta}}\bigg] \bigg\}^\frac{8-\beta}{8}\bigg\{\sup_{t\in[0,T]}\mathbb{E}\bigg[1+|\bar{X}(t)|^8+|\hat{\bar{X}}(t)|^8+|v(t)|^8
+|\bar{v}(t)|^8\bigg]\bigg\}^\frac{\beta}{8}\rightarrow0$ as $\e\rightarrow0$.
\ms

Now we focus on the term   $\mathbb{E}\bigg[\Big(\int_{E_\varepsilon}|p^1(t)\widehat{\delta b(t,v(t))}|dt\Big)^\beta\bigg]$.
Notice that $|\delta b(t,v(t))|\leq L(1+|\bar{X}(t)|+|\widehat{\bar{X}}(t)|+|v(t)|+|\bar{v}(t)|)$,
it follows from H\"{o}lder's inequality
\begin{align} \nonumber
\begin{aligned}
&\mathbb{E}\bigg[\Big(\int_{E_\varepsilon}|p^1(t)\widehat{\delta b(t,v(t))}|dt\Big)^\beta\bigg]
\leq \mathbb{E}\bigg[\sup\limits_{0\leq t\leq T}|p_1(t)|^\beta\cdot
\Big(\int_{E_\varepsilon} \mathbb{E}\Big[\delta b(t,v(t))|\sF^\alpha_{t-}\Big] dt  \Big)^\beta\bigg] \\
&\leq \varepsilon^{\frac{\beta}{2}}
\bigg\{  \mathbb{E}\bigg[\Big(\int_{E_\varepsilon}\Big |\mathbb{E}\Big[\delta b(t,v(t))|\sF^\alpha_{t-}\Big]\Big|^2 dt\Big)^4\bigg]\bigg\}^{\frac{\beta}{8}}\cdot
\bigg\{ \mathbb{E}\Big[ \sup\limits_{0\leq t\leq T}|p_1(t)|^{\frac{8\beta}{8-\beta}} \Big]\bigg\}^\frac{8-\beta}{8}\\
&\leq C_\beta\varepsilon^{\frac{7\beta}{8}}
\bigg\{ \int_{E_\varepsilon}(1+\mathbb{E}[|\bar{X}(t)|^8]+\mathbb{E}[|\widehat{\bar{X}}(t)|^8]+\mathbb{E}[|v(t)|^8]+\mathbb{E}[|\bar{v}(t)|^8])dt\bigg\}^ {\frac{\beta}{8}}\\
&\leq C_\beta\varepsilon^{\beta}.
\end{aligned}
\end{align}

Next, for  $\mathrm{ii)}$, it follows from the definition of $\delta^2 Y(s)$ and (\ref{4.0-1})
\begin{align} \nonumber
\begin{aligned}
\delta^2Y(s)&=\Phi_x^\rho(T) \delta^2X(T)+\Phi_{x'}^\rho(T) \delta^2\widehat{X}(T)+I_1(T)+\int_t^T (f^{\rho\varepsilon}_x(s)\delta^2X(s)+f^{\rho\varepsilon}_{x'}(s)\delta^2\widehat{X}(s)\\
&\q+f^{\rho\varepsilon}_{y}(s)\delta^2Y(s)+f^{\rho\varepsilon}_{z}(s)\delta^2Z(s) +I_2(s))ds-\int_t^T\delta^2Z(s)dW(s),\  s\in[t,T],
\end{aligned}
\end{align}
where
$$
\begin{aligned}
I_1(T)&=X^{1,\varepsilon}(T)\big(\Phi_x^\rho(T)-\Phi_x(T)\big)+\widehat{X}^{1,\varepsilon}(T)\big(\Phi_{x'}^\rho(T)-\Phi_{x'}(T)\big),\\
I_2(s)&=X^{1,\varepsilon}(s)\big(f_x^{\rho\varepsilon}(s)-f_x(s)\big)+\widehat{X}^{1,\varepsilon}(s)\big(f_{x'}^{\rho\varepsilon}(s)-f_{x'}(s)\big)
        +Y^{1,\varepsilon}(s)\big(f_{y}^{\rho\varepsilon}(s)-f_y(s)\big)\\
        &\q+Z^{1,\varepsilon}(s)\big(f_{z}^{\rho\varepsilon}(s)-f_z(s)\big)
        +\mathbf{1}_{E_\varepsilon}\Big\{\delta f(s,v(s))+f_z(s)p^0(s)\delta\sigma(s,v(s))\\
        &\q+q^0(s)\delta\sigma(s,v(s))+p^0(s)\delta b(s,v(s))+p^1(s)\widehat{\delta b(s,v(s))} \Big\}.
        \end{aligned}
$$
Thanks to   \autoref{le 6.2}, it yields
\begin{equation}\label{8.6}
\begin{aligned}
&\mathbb{E}\bigg[\sup\limits_{0\leq s\leq T }|\delta^2 Y(s)|^4+\Big(\int_0^T|\delta^2Z(s)|^2ds\Big)^2 \bigg]\\
&\leq \mathbb{E}|\Phi_{x}^\rho(T)\delta^2X(T)+\Phi_{x'}^\rho(T) \delta^2\widehat{X}(T)+I_1(T)|^4\\
&+\mathbb{E}\Big[ \Big(\int_0^T|f^{\rho\varepsilon}_x(s)\delta^2X(s)+f^{\rho\varepsilon}_{x'}(s)\delta^2\widehat{X}(s)+I_2(s)|ds\Big)^4\Big].
\end{aligned}
\end{equation}

\ms

We estimate it term by term.

\ms

$\mathrm{a)}$\  Thanks to   $\Phi_x,\Phi_{x'}$ being uniformly Lipschitz continuous in $(x,x')$, H\"{o}lder inequality  and the fact
$|\Phi_{x}^\rho(T)|+|\Phi_{x'}^\rho(T)|\leq L(1+|\bar{X}(T)|+|X^\e(T)|+|\widehat{\bar{X}}(T)|+|\widehat{X}^\e(T)|),$
we obtain from \autoref{le 4.1} and \autoref{pro 4.2}
\begin{equation}\label{8.6-1}
\begin{aligned}
&\mathbb{E}|\Phi_{x}^\rho(T)\delta^2X(T)+\Phi_{x'}^\rho(T) \delta^2\widehat{X}(T)+I_1(T)|^4\\
&\leq C \bigg\{\mathbb{E}|\delta^2X(T)|^8\bigg\}^{\frac{1}{2}}
\bigg\{ 1+\mathbb{E}|\bar{X}(T)|^8+\mathbb{E}|X^\varepsilon(T)|^8+ \mathbb{E}|\widehat{\bar{X}}(T)|^8+ \mathbb{E}| \widehat{X}^\varepsilon(T)|^8          \bigg\}^{\frac{1}{2}}\\
&\q +C\bigg\{\mathbb{E}|X^{1,\varepsilon}(T)|^8\bigg\}^{\frac{1}{2}}
\bigg\{\mathbb{E}| \delta^1 X(T)|^8+\mathbb{E} | \delta^1 \widehat{X}(T)|^8         \bigg\}^{\frac{1}{2}}\leq C\varepsilon^4.
\end{aligned}
\end{equation}

$\mathrm{b)}$ Notice
$|f^{\rho\varepsilon}_x(s)|+|f^{\rho\varepsilon}_{x'}(s)|\leq L(1+|\bar{X}(s)|+|X^\varepsilon(s)|+|\widehat{\bar{X}}(s)|
+| \widehat{X}^\varepsilon(s)|+|v^\e(s)|)$, thanks to the item $\mathrm{ii)}$ of (\ref{4.3}) and H\"{o}lder  inequality, one gets
\begin{equation}\label{8.8}
\begin{aligned}
&\mathbb{E}\Big[ \Big(\int_0^T|f^{\rho\varepsilon}_x(s)\delta^2X(s)+f^{\rho\varepsilon}_{x'}(s)\delta^2\widehat{X}(s)|ds\Big)^4\Big]\\
&\leq C\bigg\{\dbE\[\sup_{s\in[0,T]} |\delta^2X(s)|^8+\sup_{s\in[0,T]} |\delta^2\widehat{X}(s)|^8\]\bigg\}^\frac{1}{2}\cd
\bigg\{\dbE\[\sup_{s\in[0,T]}(1+|\bar{X}(s)|^8+|X^\varepsilon(s)|^8\\
&\qq\qq\qq\qq\qq\qq\qq\qq\qq\qq\qq\qq
+|\widehat{\bar{X}}(s)|^8+|\widehat{X}^\varepsilon(s)|^8+|v^\e(s)|^8)\]\bigg\}^\frac{1}{2}\\
&\leq C\varepsilon^4.
\end{aligned}
\end{equation}

\ms

We now analyse $\mathbb{E}\Big[ \Big(\int_0^TI_2(s)|ds\Big)^4\Big]$. First, let us estimate
those terms in $I_2$ without involving $\mathbf{1}_{E_\varepsilon}$.

\ms

$\mathrm{c)}$\ Notice
$|f_{z}^{\rho\varepsilon}(s)-f_z(s)|\leq L(|\delta^1 X(s)|+|\delta^1 \widehat{X}(s)|+|\delta^1 Y(s)|+|\delta^1 Z(s)|+
|\delta f_z(s,v(s))|\mathbf{1}_{E_\varepsilon}(s))$
and
$|\delta f_z(s,v(s))|\leq L(1+|\bar{X}(s)|+|\widehat{\bar{X}}(s)|
+| v(s)|+|\bar{v}(s)|)$,
we have from  H\"{o}lder inequality, \autoref{le 4.1} as well as the item $\mathrm{i)}$ of \autoref{pro 4.5},
\begin{equation}\label{8.7}
\begin{aligned}
&\mathbb{E}\Big[ \Big(\int_0^T|Z^{1,\varepsilon}(s)\big(f_{z}^{\rho\varepsilon}(s)-f_z(s)\big) |ds\Big)^4\Big]\\
&\leq  \mathbb{E}\Big[ \Big(\int_0^T|Z^{1,\varepsilon}(s)|^2 ds \Big)^2 \Big(\int_0^T|f_{z}^{\rho\varepsilon}(s)-f_z(s)|^2 ds \Big)^2 \Big]\\
&\leq \Big\{\mathbb{E}\Big[ \Big(\int_0^T|Z^{1,\varepsilon}(s)|^2 ds \Big)^4 \Big]  \Big\}^\frac{1}{2}
\cdot\Big\{\mathbb{E}\Big[ \Big(\int_0^T|f_{z}^{\rho\varepsilon}(s)-f_z(s)|^2 ds \Big)^4 \Big]  \Big\}^\frac{1}{2}\\
&\leq C\bigg\{\mathbb{E}\Big[ \Big(\int_0^T|Z^{1,\varepsilon}(s)|^2 ds \Big)^4 \Big]  \bigg\}^\frac{1}{2}\cdot
\bigg\{\mathbb{E}\bigg[\sup\limits_{0\leq s\leq T}\Big(|\delta^1 X(s)|^8+|\delta^1 \widehat{X}(s)|^8
  +|\delta^1 Y(s)|^8 \Big) \\
&\qq\qq\qq\qq\qq\qq\qq\qq\qq\qq
+\Big(\int_0^T|\delta^1 Z(s)|^2+|\delta f_z(s,v(s))|^2\mathbf{1}_{E_\varepsilon}(s)ds  \Big)^4 \bigg]  \bigg\}^\frac{1}{2}\\
\end{aligned}
\end{equation}
\begin{align}\nonumber
\begin{aligned}
 &\leq C\bigg\{\mathbb{E}\Big[ \Big(\int_0^T|Z^{1,\varepsilon}(s)|^2 ds \Big)^4 \Big]  \bigg\}^\frac{1}{2} \cd
 \bigg\{\mathbb{E}\bigg[\sup\limits_{0\leq s\leq T}\Big(|\delta^1 X(s)|^8+|\delta^1 \widehat{X}(s)|^8+|\delta^1 Y(s)|^8 \Big)\\
&\qq\qq\qq\qq\qq
+\Big(\int_0^T|\delta^1 Z(s)|^2+(1+|\bar{X}(s)|+|\widehat{\bar{X}}(s)|
+| v(s)|+|\bar{v}(s)|)^2\mathbf{1}_{E_\varepsilon}(s)ds  \Big)^4 \bigg]  \bigg\}^\frac{1}{2} \\
  &\leq C\varepsilon^4.
\end{aligned}
\end{align}
Similar to (\ref{8.7}), it yields
\begin{equation}\label{8.7-11}
\begin{aligned}
&\mathbb{E}\Big[ (\int_0^T|X^{1,\varepsilon}(s)\big(f_x^{\rho\varepsilon}(s)-f_x(s)\big) |ds)^4\Big]+
\mathbb{E}\Big[ (\int_0^T|\widehat{X}^{1,\varepsilon}(s)\big(f_{x'}^{\rho\varepsilon}(s)-f_{x'}(s)\big) |ds)^4\Big],\\
&+\mathbb{E}\Big[ (\int_0^T|Y^{1,\varepsilon}(s)\big(f_{y}^{\rho\varepsilon}(s)-f_y(s)\big) |ds)^4\Big]\leq C\varepsilon^4.
\end{aligned}
\end{equation}

\ms

Next, we  estimate those terms  in $I_2$ involving $\mathbf{1}_{E_\varepsilon}$.

\ms

$\mathrm{d)}$\ Since $|\delta\sigma(s,v(s))|\leq L(1+|\bar{X}(s)|+|\hat{\bar{X}}(s)|+|v(s)|+|\bar{v}(s)|)$,
it follows from H\"{o}lder inequality
\begin{align} \nonumber
\begin{aligned}
&\mathbb{E}\bigg[\Big(\int_{E_\varepsilon}|q^0(s)\delta\sigma(s,v(s))|ds\Big)^4\bigg]\\
&\leq C_\beta \mathbb{E}\bigg[\bigg(\int_{E_\varepsilon}|q^0(s)|\big(1+|\bar{X}(s)|+|\hat{\bar{X}}(s)|+|v(s)|
+|\bar{v}(s)|\big)ds\bigg)^4\bigg]\\
&\leq C_\beta\mathbb{E}\bigg[\bigg(\int_{E_\varepsilon}|q^0(s)|^2ds\bigg)^{2}
\bigg(\int_{E_\varepsilon}\big(1+|\bar{X}(s)|^2+|\hat{\bar{X}}(s)|^2+|v(s)|^2+|\bar{v}(s)|^2\big)ds\bigg)^2\bigg]\\
&\leq C_\beta\bigg\{\mathbb{E}\bigg[\bigg(\int_{E_\varepsilon}|q^0(s)|^2ds\bigg)^{4}\bigg] \bigg\}^\frac{1}{2}
\bigg\{\mathbb{E}\bigg[\bigg(\int_{E_\varepsilon}\big(1+|\bar{X}(s)|^2+|\hat{\bar{X}}(s)|^2+|v(s)|^2
+|\bar{v}(s)|^2\big)ds\bigg)^4\bigg]\bigg\}^\frac{1}{2}\\
&\leq C_\beta \varepsilon^{2}\bigg\{\mathbb{E}\bigg[\bigg(\int_{E_\varepsilon}|q^0(s)|^2ds\bigg)^{4} \bigg\}^\frac{1}{2}.
\end{aligned}
\end{align}

\noindent Dominated convergence theorem allows to show that $\bigg\{\mathbb{E}\bigg[\bigg(\int_{E_\varepsilon}|q^0(s)|^2ds\bigg)^{4} \bigg\}^\frac{1}{2}$
converges to zero, as $\varepsilon\rightarrow0$. Define $\rho(\varepsilon):= C_\beta\bigg\{\mathbb{E}\bigg[\bigg(\int_{E_\varepsilon}|q^0(s)|^2ds\bigg)^{4}\bigg] \bigg\}^\frac{1}{2}$, we have
$\mathbb{E}\bigg[\Big(\int_{E_\varepsilon}|q^0(s)\delta\sigma(s,v(s))|ds\Big)^4\bigg]\leq\varepsilon^2\rho(\varepsilon).$\\
Similarly, we can deduce
\begin{equation}\label{8.9}
\begin{aligned}
&\mathbb{E}\bigg[\Big(\int_{E_\varepsilon} \delta f(s,v(s))+f_z(s)p^0(s)\delta\sigma(s,v(s))+p^0(s)\delta b(s,v(s))+p^1(s)\widehat{\delta b(s,v(s))}ds\Big)^4\bigg]\leq \varepsilon^{2}\rho(\varepsilon).
\end{aligned}
\end{equation}
Hence, it yields
\begin{equation}\label{8.9-1}
\begin{aligned}
&\mathbb{E}\Big[ \Big(\int_0^T|I_2(s)|ds\Big)^4\Big]\leq \varepsilon^{2}\rho(\varepsilon).
\end{aligned}
\end{equation}
Finally, it follows from (\ref{8.6}), (\ref{8.6-1}), (\ref{8.8}) and (\ref{8.9-1}) that
\begin{equation}\label{8.611111111}
\begin{aligned}
&\mathbb{E}\bigg[\sup\limits_{0\leq s\leq T }|\delta^2 Y(s)|^4+\Big(\int_0^T|\delta^2Z(s)|^2ds\Big)^2 \bigg] \leq \varepsilon^{2}\rho(\varepsilon).
\end{aligned}
\end{equation}

\subsection{Proof of Lemma \ref{le 5.1}}

Denote
$$
\begin{aligned}
\bar{\Gamma}(t)&=(\bar{X}(t),\widehat{\bar{X}}(t),\bar{Y}(t),\bar{Z}(t)), \q \Gamma^\varepsilon(t)=(X^\varepsilon(t),\widehat{X}^\varepsilon(t),Y^\varepsilon(t),Z^\varepsilon(t)),
\end{aligned}
$$
and define
$$
\begin{aligned}
D^2f^{\tilde{\rho}\rho\varepsilon}(t)&=2\int_0^1\int_0^1\rho
D^2f(t,\bar{\Gamma}(t)+\tilde{\rho}\rho(\Gamma^\varepsilon(t)-\bar{\Gamma}(t)),v^\varepsilon(t),\alpha(t-))d\tilde{\rho}d\rho,\\
D^2\Phi^{\tilde{\rho}\rho\e}(t)&=2\int_0^1\int_0^1\rho
D^2\Phi(\bar{X}(T)+\tilde{\rho}\rho(X^\varepsilon(T)-\bar{X}(T)),\widehat{\bar{X}}(T)+\tilde{\rho}\rho(\widehat{X}^\varepsilon(T)
-\widehat{\bar{X}}(T)),   \alpha(T))d\tilde{\rho}d\rho.
\end{aligned}
$$
Then we obtain
\begin{align} \nonumber
\left\{
\begin{aligned}
d\delta^3Y(t)&=\Big(f_x(t)\delta^3X(t)+f_{x'}(t)\delta^3\widehat{X}(t)+f_{y}(t)\delta^3Y(t)+f_{z}(t)\delta^3Z(t)+M_1(t)\Big)dt
-\delta^3Z(t)dW(t),\\
\delta^3Y(T)&=\Phi_x(T)\delta^3X(T)+\Phi_{x'}(T)\delta^3\widehat{X}(T)+M_2(T),
\end{aligned}
\right.
\end{align}
where
\begin{align} \nonumber
\begin{aligned}
M_1(t)&=\Big[\delta f_x(t,v(t),p^0\delta\sigma(t))\delta^1X(t)+\delta f_{x'}(t,v(t),p^0\delta\sigma(t))\delta^1\widehat{X}(t)+\delta f_{y}(t,v(t),p^0\delta\sigma(t))\delta^1Y(t)\\
       &\quad +\delta f_{z}(t,v(t),p^0\delta\sigma(t))(\delta^1Z(t)-p^0(t)\delta \sigma(t,v(t))\mathbf{1}_{E_\varepsilon}(t))\Big]\mathbf{1}_{E_\varepsilon}(t)\\
     &\quad  +\frac{1}{2} [\delta^1X(t), \delta^1\widehat{X}(t),\delta^1Y(t),\delta^1Z(t)-p^0(t)\delta\sigma(t,v(t))\mathbf{1}_{E_\varepsilon}(t)]
  D^2f^{\tilde{\rho}\rho\varepsilon}(t)\\
  &\qq\qq\q
  [\delta^1X(t), \delta^1\widehat{X}(t),\delta^1Y(t),\delta^1Z(t)-p^0(t)\delta\sigma(t,v(t))\mathbf{1}_{E_\varepsilon}(t)]^\intercal\\
  &\quad-\frac{1}{2}[1, p^0(t),p^0(t)\sigma_x(t)+q^0(t)]
                         D_{xyz}^2f(t) [1, p^0(t),p^0(t)\sigma_x(t)+q^0(t)]^\intercal(X^{1,\varepsilon}(t))^2\\
M_2(T)&=\frac{1}{2}\Phi_{xx}^{\tilde{\rho}\rho\e}(T)(\delta^1 X(T))^2-\frac{1}{2}\Phi_{xx}(T)(X^{1,\varepsilon}(T))^2\\
     &\quad+\frac{1}{2}\Phi_{xx'}^{\tilde{\rho}\rho\e}(T)\delta^1 X(T)\delta^1 \widehat{X}(T)+\frac{1}{2}\Phi_{x'x}^{\tilde{\rho}\rho\e}(T)\delta^1 X(T)\delta^1 \widehat{X}(T)
    +\frac{1}{2}\Phi_{x'x'}^{\tilde{\rho}\rho\e}(T)(\delta^1 \widehat{X}(T))^2 ,
 \end{aligned}
\end{align}
and for $l=x,x',y,z$
 \begin{align} \nonumber
 \begin{aligned}
               &\d f_l(t,v(t),p^0\d\si(t))=\partial_lf\(t,\bar{X}(t),\dbE[\bar{X}(t)|\sF^\a_{t-}],\bar{Y}(t),\bar{Z}(t)+p^0(t)\d\si(t,v(t)),v(t),\a(t-)\)\\
               &\qq\qq\qq\qq\q -\partial_lf\(t,\bar{X}(t),\dbE[\bar{X}(t)|\sF^\a_{t-}],\bar{Y}(t),\bar{Z}(t),\bar{v}(t),\a(t-)\).
            \end{aligned}
            \end{align}
From the boundness of $\Phi_{xx},\Phi_{xx'},\Phi_{x'x'}$, H\"{o}lder inequality, \autoref{le 4.1} and (\ref{4.3}), we have
$$
\begin{aligned}
&\mathbb{E}\Big[|M_2(T)|^2\Big]\\
&\leq C\dbE\[|\Phi_{xx}^{\tilde{\rho}\rho\e}(T)-\Phi_{xx}(T)|^2 |\d^1 X(T)|^4+|\d^2 X(T)|^2|\d^1X(T)+X^{1,\e}(T)|^2
+|\d^1 X(T)\d^1\h{X}(T)|^2+|\d^1 \h{X}(T)|^4\]\\
&\leq C\bigg( \Big\{\dbE\[|\Phi_{xx}^{\tilde{\rho}\rho\e}(T)-\Phi_{xx}(T)|^4\]\Big\}^\frac{1}{2}
\Big\{\dbE\[\sup_{t\in[0,T]}|\d^1 X(t)|^8\]\Big\}^\frac{1}{2}\\
&\q+\Big\{\dbE\[\sup_{t\in[0,T]}|\d^2 X(t)|^4\]\Big\}^\frac{1}{2}
\Big\{\dbE\[\sup_{t\in[0,T]}|\d^1X(t)+X^{1,\e}(t)|^4\]\Big\}^\frac{1}{2}\\
&\q+\Big\{\mathbb{E}\[| \sup_{t\in[0,T]}|\delta^1 X(t)|^4\]\Big\}^\frac{1}{2}\cdot
    \Big\{\mathbb{E}\[|\sup_{t\in[0,T]}|\delta^1\widehat{ X}(t)|^4\]\Big\}^\frac{1}{2}
      +\mathbb{E}\[|\sup_{t\in[0,T]}|\delta^1\widehat{ X}(t)|^4\]\bigg).
      \end{aligned}
$$
\noindent Dominated convergence theorem can show  $\Big\{\dbE\[|\Phi_{xx}^{\tilde{\rho}\rho\e}(T)-\Phi_{xx}(T)|^4\]\Big\}^\frac{1}{2}\ra0$
as $\e\ra0$.
Thereby, it yields from \autoref{pro 4.2} that $\mathbb{E}\Big[|M_2(T)|^2\Big]\leq\e^2\rho(\e).$

\ms

Now we focus on $M_1(t)$.  Let us first analyse those terms in $M_1(t)$ involving  $\mathbf{1}_{E_\varepsilon}(t)$.

\ms

$\mathrm{a)}$\ Notice
$|\delta f_{z}(t,v(t),p^0\delta\sigma(t))|\leq L (1+|\bar{X}(t)|+|\widehat{\bar{X}}(t)|+|v(t)|+|\bar{v}(t)|)$,
we obtain from (\ref{4.1-2}),
$$
\begin{aligned}
&\mathbb{E}\bigg[\Big(\int_0^T\delta f_{z}(t,v(t),p^0\delta\sigma(t))(\delta^1Z(t)-p^0(t)\delta \sigma(t,v(t))\mathbf{1}_{E_\varepsilon}(t)) \mathbf{1}_{E_\varepsilon}(t)\Big)^2\bigg]\\
&= \mathbb{E}\bigg[\Big(\int_{{E_\varepsilon}}\delta f_{z}(t,v(t),p^0\delta\sigma(t))(\delta^2Z(t)
+Z^{1,\varepsilon}(t)-p^0(t)\delta \sigma(t,v(t))\mathbf{1}_{E_\varepsilon}(t))dt\Big)^2\bigg]\\
&= \mathbb{E}\bigg[\Big(\int_{{E_\varepsilon}}\delta f_{z}(t,v(t),p^0\delta\sigma(t))(\delta^2Z(t)
+[p^0(t)\sigma_x(t)+q^0(t)]X^{1,\varepsilon}(t)
           +[p^0(t)\sigma_{x'}(t)+q^1(t)]\widehat{X}^{1,\varepsilon}(t)
)dt\Big)^2\bigg]\\
&\leq L\mathbb{E}\bigg[\Big(\int_{{E_\varepsilon}}(1+|\bar{X}(t)|+|\widehat{\bar{X}}(t)|+|v(t)|+|\bar{v}(t)|)
(|\delta^2Z(t)
+[p^0(t)\sigma_x(t)+q^0(t)]X^{1,\varepsilon}(t)\\
           &\quad +[p^0(t)\sigma_{x'}(t)+q^1(t)]\widehat{X}^{1,\varepsilon}(t)|
)dt\Big)^2\bigg].
\end{aligned}
$$
From H\"{o}lder inequality,\ (\ref{4.4}) and \autoref{pro 4.2}, we get from Dominated convergence theorem
$$
\begin{aligned}
&\mathbb{E}\Big[ \Big(\int_{{E_\varepsilon}}|v(t)| |\delta^2Z(t)| dt  \Big)^2   \Big]
\leq \Big\{\varepsilon \mathbb{E}\Big[\int_{{E_\varepsilon}}|v(t)|^4dt\Big]\Big\}^\frac{1}{2}
\Big\{ \mathbb{E} \Big(\int_{{E_\varepsilon}}|\delta^2Z(t)|^2 dt    \Big)^2 \Big\}^\frac{1}{2}
\leq \varepsilon^3,
\end{aligned}
$$
and
$$
\begin{aligned}
&\mathbb{E}\Big[ \Big(\int_{{E_\varepsilon}}|v(t)| |q^0(t) X^{1,\varepsilon}| dt  \Big)^2   \Big]\\
&\leq \bigg\{\e^3\mathbb{E}\Big[\int_{{E_\varepsilon}}|v(t)|^8dt\Big]\bigg\}^\frac{1}{4}
\bigg\{ \mathbb{E} \Big(\int_{{E_\varepsilon}}|q^0(t)|^2 dt    \Big)^2 \bigg\}^\frac{1}{2}
\bigg\{ \mathbb{E} \Big[\sup_{0\leq t\leq T}|X^{1,\varepsilon}(t)|^8 dt  \Big] \bigg\}^\frac{1}{4}
\leq \varepsilon^2\rho(\varepsilon).
\end{aligned}
$$
The other terms can be estimated with the  similar argument. Consequently, one has
$$
\mathbb{E}\bigg[\Big(\int_0^T\delta f_{z}(t,v(t),p^0\delta\sigma(t))(\delta^1Z(t)-p^0(t)\delta \sigma(t,v(t))\mathbf{1}_{E_\varepsilon}(t))\mathbf{1}_{E_\varepsilon}(t)dt\Big)^2\bigg]\leq\varepsilon^2\rho(\varepsilon).
$$
Analogously, we can show
\begin{align} \nonumber
\begin{aligned}
&\mathbb{E}\bigg[\Big(\int_0^T[\delta f_x(t,v(t),p^0\delta\sigma(t))\delta^1X(t)+\delta f_{x'}(t,v(t),p^0\delta\sigma(t))\delta^1\widehat{X}(t)+\delta f_{y}(t,v(t),p^0\delta\sigma(t))\delta^1Y(t)]\mathbf{1}_{E_\varepsilon}(t)dt\Big)^2\bigg]\\
&\leq\varepsilon^2\rho(\varepsilon).
\end{aligned}
\end{align}

$\mathrm{b)}$\  Let us now concern those terms in $M_1(t)$  involving  the second-order derivatives of $f$. First,
$$
\begin{aligned}
&\mathbb{E}\bigg[\Big(\int_0^T
\frac{1}{2} [\delta^1X(t), \delta^1\widehat{X}(t),\delta^1Y(t),\delta^1Z(t)-p^0(t)\delta\sigma(t,v(t))\mathbf{1}_{E_\varepsilon}(t)]
  D^2f^{\tilde{\rho}\rho\varepsilon}(t)\\
  &\quad\quad
  [\delta^1X(t), \delta^1\widehat{X}(t),\delta^1Y(t),\delta^1Z(t)-p^0(t)\delta\sigma(t,v(t))\mathbf{1}_{E_\varepsilon}(t)]^\intercal\\
  &\quad-\frac{1}{2}[1, p^0(t),p^0(t)\sigma_x(t)+q^0(t)]
                         D_{xyz}^2f(t) [1, p^0(t),p^0(t)\sigma_x(t)+q^0(t)]^\intercal(X^{1,\varepsilon}(t))^2\Big)^2\bigg]\\
&=\frac{1}{4}\mathbb{E}\bigg[\Big(\int_0^T I_1(t)+ I_2(t) +I_3(t)+I_4(t)dt\Big)^2\bigg],\\
\end{aligned}
$$
where
$$
\begin{aligned}
I_1(t)&:=
[\delta^1X(t), \delta^1\widehat{X}(t),\delta^1Y(t),\delta^1Z(t)-p^0(t)\delta\sigma(t,v(t))\mathbf{1}_{E_\varepsilon}(t)]
  D^2f^{\tilde{\rho}\rho\varepsilon}(t)\\
  &\qq\qq
  [\delta^1X(t), \delta^1\widehat{X}(t),\delta^1Y(t),\delta^1Z(t)-p^0(t)\delta\sigma(t,v(t))\mathbf{1}_{E_\varepsilon}(t)]^\intercal\\
  &\q\ -[\delta^1X(t), \delta^1Y(t),\delta^1Z(t)-p^0(t)\delta\sigma(t,v(t))\mathbf{1}_{E_\varepsilon}(t)]
  D^2_{xyz}f^{\tilde{\rho}\rho\varepsilon}(t)\\
 &\qq\qq
 [\delta^1X(t), \delta^1Y(t),\delta^1Z(t)-p^0(t)\delta\sigma(t,v(t))\mathbf{1}_{E_\varepsilon}(t)]^\intercal ,\\
 I_2(t)&:=[\delta^1X(t), \delta^1Y(t),\delta^1Z(t)-p^0(t)\delta\sigma(t,v(t))\mathbf{1}_{E_\varepsilon}(t)]
  D^2_{xyz}f^{\tilde{\rho}\rho\varepsilon}(t)\\
 &\qq\qq
 [\delta^1X(t), \delta^1Y(t),\delta^1Z(t)-p^0(t)\delta\sigma(t,v(t))\mathbf{1}_{E_\varepsilon}(t)]^\intercal \\
  &\q
  -[X^{1,\varepsilon}(t), Y^{1,\varepsilon}(t),Z^{1,\varepsilon}(t)-p^0(t)\delta\sigma(t,v(t))\mathbf{1}_{E_\varepsilon}(t)]
  D^2_{xyz}f^{\tilde{\rho}\rho\varepsilon}(t)  \\
 &\qq\qq
 [X^{1,\varepsilon}(t),Y^{1,\varepsilon}(t),Z^{1,\varepsilon}(t)
 -p^0(t)\delta\sigma(t,v(t))\mathbf{1}_{E_\varepsilon}(t)]^\intercal,\\
 I_3(t)&:=[X^{1,\varepsilon}(t), Y^{1,\varepsilon}(t),Z^{1,\varepsilon}(t)-p^0(t)\delta\sigma(t,v(t))\mathbf{1}_{E_\varepsilon}(t)]
  D^2_{xyz}f^{\tilde{\rho}\rho\varepsilon}(t)  \\
 &\qq\qq
 [X^{1,\varepsilon}(t),Y^{1,\varepsilon}(t),Z^{1,\varepsilon}(t)-p^0(t)\delta\sigma(t,v(t))\mathbf{1}_{E_\varepsilon}(t)]^\intercal\\
 &\q\
  -[1, p^0(t),p^0(t)\sigma_x(t)+q^0(t)]
                         D_{xyz}^2f^{\tilde{\rho}\rho\varepsilon}(t) [1, p^0(t),p^0(t)\sigma_x(t)+q^0(t)]^\intercal(X^{1,\varepsilon}(t))^2,\\
  I_4(t)&:=
  [1, p^0(t),p^0(t)\sigma_x(t)+q^0(t)]
                         (D_{xyz}^2f^{\tilde{\rho}\rho\varepsilon}(t)-D_{xyz}^2f(t))
                          [1, p^0(t),p^0(t)\sigma_x(t)+q^0(t)]^\intercal(X^{1,\varepsilon}(t))^2
   \end{aligned}
$$
and
$$
D^2_{xyz}f^{\tilde{\rho}\rho\varepsilon}(t)=2\int_0^1\int_0^1\rho
D^2_{xyz}f(t,\bar{\Gamma}(t)+\tilde{\rho}\rho(\Gamma^\varepsilon(t)-\bar{\Gamma}(t)),v^\varepsilon(t),\alpha(t-))d\tilde{\rho}d\rho.
$$

\ms

We analyse them one by one.

\ms

$\mathrm{c)}$\ For $I_1(t)$, from the boundness of the second-order derivatives of $f$ with respect to $(x,x',y,z)$ we have
\begin{align} \nonumber
\begin{aligned}
|I_1(t)|&\leq C\bigg\{|\delta^1 \widehat{X}(t)|\cdot |\delta^1 X(t)|+|\delta^1 \widehat{X}(t)|^2+|\delta^1 \widehat{X}(t)|\cdot |\delta^1 Y(t)|\\
        &\quad+|\delta^1 \widehat{X}(t)|   |\delta^1Z(t)-p^0(t)\delta\sigma(t,v(t))\mathbf{1}_{E_\varepsilon}(t)|\bigg\}.
\end{aligned}
\end{align}
Hence, it follows from \autoref{le 4.1}, \autoref{pro 4.2}, \autoref{le 4.3} and \autoref{pro 4.5}
\begin{align}\nonumber
\begin{aligned}
&\mathbb{E}\bigg[\Big(\int_0^T |I_1(t)|dt\Big)^2\bigg]\\
&\leq C\mathbb{E}\bigg[\Big(\int_0^T |\delta^1 \widehat{X}(t)|\cdot (|\delta^1 X(t)|+|\delta^1 \widehat{X}(t)|+|\delta^1 Y(t)|+  |\delta^1Z(t)-p^0(t)\delta\sigma(t,v(t))\mathbf{1}_{E_\varepsilon}(t)|)dt\Big)^2\bigg]\\
&\leq C\dbE\bigg[\sup_{t\in[0,T]} |\delta^1 \widehat{X}(t)|^2\cdot
\( \sup_{t\in[0,T]}|\delta^1 X(t)|^2+\sup_{t\in[0,T]}|\delta^1 \widehat{X}(t)|^2+\sup_{t\in[0,T]}|\delta^1 Y(t)|^2 \\
&\qq \qq\qq\qq\qq\qq\qq\q
+(\int_0^T|\delta^2Z(t)+Z^{1,\varepsilon}(t)-p^0(t)\delta\sigma(t,v(t))\mathbf{1}_{E_\varepsilon}(t)|dt)^2\)  \bigg] \\
&\leq C \dbE\bigg[\sup_{t\in[0,T]} |\delta^1 \widehat{X}(t)|^2\cdot
\( \sup_{t\in[0,T]}|\delta^1 X(t)|^2+\sup_{t\in[0,T]}|\delta^1 \widehat{X}(t)|^2+\sup_{t\in[0,T]}|\delta^1 Y(t)|^2 \\
&\q
+\int_0^T|\delta^2Z(t)|^2dt
                  +\sup_{t\in[0,T]}|X^{1,\varepsilon}(t)|^2\cd\int_0^T|p^0(t)\sigma_x(t)+q^0(t)|^2dt\\
                &   \q+\sup_{t\in[0,T]}|\widehat{X}^{1,\varepsilon}(t)|^2\cd\int_0^T|p^0(t)\sigma_{x'}(t)+q^1(t)|^2dt\)  \bigg]\leq \e^3.
\end{aligned}
\end{align}

\ms

$\mathrm{d)}$\ As for $I_2(t)$, thanks to the boundness of the derivatives of $f$ with respect to $(x,y,z)$, \autoref{le 4.1}, \autoref{pro 4.2}, \autoref{le 4.3} and \autoref{pro 4.5} we have
\begin{align}\nonumber
\begin{aligned}
&\mathbb{E}\bigg[\Big(\int_0^T |I_2(t)|dt\Big)^2\bigg]\\
&\leq C\dbE\bigg[\Big(\int_0^T \Big|\(\d^2X(t)+\d^2Y(t)+\d^2Z(t) \)\(\d^1X(t)+\d^1Y(t)+\delta^1Z(t)-p^0(t)\delta\sigma(t,v(t))
\mathbf{1}_{E_\varepsilon}(t)\\
&\qq\qq\qq\qq\qq\qq\qq\qq\qq\qq\qq\qq
+X^{1,\e}(t)+Y^{1,\e}(t)+Z^{1,\e}(t)\)\Big|dt\Big)^2\bigg]\\
&\leq C\Bigg\{\dbE\[\sup_{t\in[0,T]}|\d^2X(t)|^4+\sup_{t\in[0,T]}|\d^2Y(t)|^4+\(\int_0^T|Z(t)|^2dt\)^2\]\Bigg\}^\frac{1}{2}\cd\\
&\qq
\Bigg\{\dbE\[\sup_{t\in[0,T]}|\d^1X(t)+X^{1,\e}(t)|^4+\sup_{t\in[0,T]}|\d^1Y(t)+Y^{1,\e}(t)|^4\\
&\qq
+\(\int_0^T|\delta^1Z(t)|^2+|[p^0(t)\si_x(t)+q^0(t)]X^{1,\e}(t)+[p^0(t)\si_{x'}(t)+q^1(t)]\h{X}^{1,\e}(t)|^2dt\)^2\]\Bigg\}^\frac{1}{2}\\
&\leq C\e^3.
\end{aligned}
\end{align}

\ms

$\mathrm{e)}$\  Next, we analyse $I_3(t)$. Thanks to the boundness of $\sigma_x,\sigma_{x'}$  and $D^2_{xyz}f^{\tilde{\rho}\rho\varepsilon}$, H\"{o}lder
inequality, \autoref{pro 4.2}, we have
\begin{align}\nonumber
\begin{aligned}
&\mathbb{E}\bigg[\Big(\int_0^T |I_3(t)|dt\Big)^2\bigg]\\
&\leq C \mathbb{E}\bigg[\bigg(\int_0^T \Big| X^{1,\varepsilon}(t)\widehat{X}^{1,\varepsilon}(t)
\[(p_0(t)+1)(p_0(t)\si_{x'}(t)+p_1(t)+q_1(t))+p_0(t)q_1(t)\si_{x}(t)\\
&\qq+(p_0(t)\si_{x'}(t)+p_1(t))(p_0(t)\si_{x}(t)+q_0(t))+q^0(t)q^1(t)\]\\
&\qq+(\widehat{X}^{1,\varepsilon}(t))^2\[(p_1(t))^2+(p_0(t)\si_{x'}(t)+p_1(t))(p_0(t)\si_{x'}(t)
+q_1(t))+p_0(t)q_1(t)\si_{x}(t)+(q^1(t))^2\]\Big|dt\bigg)^2\bigg].
\end{aligned}
\end{align}
 We just estimate the term $\mathbb{E}\bigg[\(\int_0^T| X^{1,\varepsilon}(t)\widehat{X}^{1,\varepsilon}(t)q^0(t)q^1(t)dt\)^2\bigg]$, since
 the other terms  are analogous.
\begin{align}\nonumber
\begin{aligned}
&\mathbb{E}\bigg[\(\int_0^T| X^{1,\varepsilon}(t)\widehat{X}^{1,\varepsilon}(t)q^0(t)q^1(t)dt\)^2\bigg]\\
&\leq \bigg\{\mathbb{E}\Big[\sup\limits_{t\in[0,T]}|\widehat{X}^{1,\varepsilon}(t)|^8\Big]\bigg\}^\frac{1}{4}
\bigg\{\mathbb{E}\Big[\sup\limits_{t\in[0,T]}|X^{1,\varepsilon}(t)|^8\Big]\bigg\}^\frac{1}{4}
\bigg\{\mathbb{E}\Big[\Big(\int_0^T| q^0(t)|^2dt \Big)^4\Big]\bigg\}^\frac{1}{4}
\bigg\{\mathbb{E}\Big[\Big(\int_0^T| q^1(t)|^2dt \Big)^4\Big]\bigg\}^\frac{1}{4}\\
& \leq\varepsilon^3
\end{aligned}
\end{align}
Consequently, one has $\mathbb{E}\bigg[\Big(\int_0^T |I_3(t)|dt\Big)^2\bigg] \leq \varepsilon^3.$

\ms

$\mathrm{f)}$\ From the continuity of the second-order derivatives of $f$ with respect to $v$,
Dominated Convergence Theorem allows to show
$\mathbb{E}\bigg[\Big(\int_0^T |I_4(t)|dt\Big)^2\bigg]\leq \varepsilon^2\rho(\varepsilon).$
Hence, we have
\begin{align}\nonumber
\begin{aligned}
&\mathbb{E}\bigg[\Big(\int_0^T
\frac{1}{2} [\delta^1X(t), \delta^1\widehat{X}(t),\delta^1Y(t),\delta^1Z(t)-p^0(t)\delta\sigma(t,v(t))\mathbf{1}_{E_\varepsilon}(t)]
  D^2f^{\tilde{\rho}\rho\varepsilon}(t)\\
  &\qq\qq\qq
  [\delta^1X(t), \delta^1\widehat{X}(t),\delta^1Y(t),\delta^1Z(t)-p^0(t)\delta\sigma(t,v(t))\mathbf{1}_{E_\varepsilon}(t)]^\intercal\\
  &\quad-\frac{1}{2}[1, p^0(t),p^0(t)\sigma_x(t)+q^0(t)]
                         D_{xyz}^2f(t) [1, p^0(t),p^0(t)\sigma_x(t)+q^0(t)]^\intercal(X^{1,\varepsilon}(t))^2dt\Big)^2\bigg]
\leq \varepsilon^2\rho(\varepsilon ).
\end{aligned}
\end{align}

\end{document}